\documentclass{amsart} 

\usepackage{amsmath}
\usepackage{amsthm}
\usepackage{amssymb}
\usepackage{amsfonts}
\usepackage{amscd}

\setlength{\oddsidemargin}{0pt}
\setlength{\evensidemargin}{0pt}
\setlength{\textheight}{630pt}
\setlength{\textwidth}{430pt}

\newcommand\rank{\mathop{\rm rank}\nolimits}
\newcommand\im{\mathop{\rm Im}\nolimits}
\newcommand\coker{\mathop{\rm coker}\nolimits}

\newcommand{\Tr}{\mathop{\rm Tr}\nolimits}

\newcommand\Hom{\mathop{\rm Hom}\nolimits}

\newcommand\Spec{\mathop{\rm Spec}\nolimits}

\newcommand{\length}{\mathop{\rm length}\nolimits}
\newcommand{\res}{\mathop{\sf res}\nolimits}
\newcommand\Quot{\mathop{\rm Quot}\nolimits}

\newcommand{\balpha}{\boldsymbol \alpha}

\newcommand{\bnu}{\boldsymbol \nu}
\newcommand{\blambda}{\boldsymbol \lambda}

\newcommand{\sch}{\mathrm{Sch}}
\newcommand{\sets}{\mathrm{Sets}}

\newtheorem{theorem}{Theorem}[section]
\newtheorem{lemma}{Lemma}[section]
\newtheorem{remark}{Remark}[section]
\newtheorem{corollary}{Corollary}[section]

\newtheorem{proposition}{Proposition}[section]

\newtheorem{definition}{Definition}[section]

\begin{document}

\title[Moduli space of ramified connections]
{Moduli space of irregular singular parabolic connections of generic ramified type
on a smooth projective curve}
\author{Michi-aki Inaba}
\address{Michi-aki Inaba \\
Department of Mathematics, Kyoto University, Kyoto, 606-8502, Japan}
\email{inaba@math.kyoto-u.ac.jp}
\subjclass[2010]{14D20, 53D30, 32G34, 34M55}
\maketitle

\begin{abstract}
We give an algebraic construction of
the moduli space of irregular singular connections
of generic ramified type on a smooth projective curve.
We prove that the moduli space is smooth and give its dimension.
Under the assumption that the exponent of ramified type is generic,
we give an algebraic symplectic form on the moduli space.
\end{abstract}

\section*{Introduction}

Let $C$ be a smooth projective curve and $E$ be an algebraic vector bundle on $C$.
Consider an algebraic connection
$\nabla\colon E\longrightarrow E\otimes\Omega^1_C(D)$
admitting poles along an effective divisor $D$ on $C$.
$\nabla$ is said to be regular singular at $t\in D$ if
$\nabla$ has a simple pole at $t$.
$\nabla$ is said to be irregular singular at $t\in D$
if the pole order of $\nabla$ at $t$ is greater than one.
We say that an irregular singular connection $\nabla$ is generic unramified at $t\in D$
if the leading term $\nabla|_t$ of the restriction
$\nabla|_{mt}\colon E|_{mt}\longrightarrow E|_{mt}\otimes\Omega_C(D)|_{mt}$
has the distinct eigenvalues, where $m$ is the pole order of $\nabla$ at $t$.
The generic unramified connections are most generic irregular singular connections.
The second generic irregular singular connections are generic ramified connections,
which is of the following type.
We say that an irregular singular connection
$(E,\nabla)$ of rank $r$ is generic $\nu$-ramified at $t\in D$ if
its completion $(\hat{E},\hat{\nabla})$ at $t$ is isomorphic to the connection
\[
 \nabla_{\nu}\colon \mathbb{C}[[w]] \ \ni \ f \ \mapsto \ df+\nu(w)f\ 
 \in \ \mathbb{C}[[w]]\otimes\frac{dz}{z^m}
\]
for $w=z^{\frac{1}{r}}$ and $\nu(w)\in\sum_{l=0}^{mr-r}\mathbb{C}w^ldw/w^{mr-r+1}$.
The aim of this paper is to construct the moduli space of generic ramified connections.

It is a classical result by R.~Fuchs that
Painlev\'e VI equations can be obtained as the
isomonodromic deformation of rank two regular singular connections
on $\mathbb{P}^1$ with four poles
and other types of Painlev\'e equations are known to be obtained
as generalized isomonodromic deformations of 
rank two connections on $\mathbb{P}^1$ with irregular singularities
(\cite {Jimbo-Miwa-Ueno}, \cite{Jimbo-Miwa-2}, \cite{Jimbo-Miwa-3}).
The space of initial conditions of Painlev\'e equations
are constructed by K.~Okamoto in \cite{Okamoto}
for all types.
Their compactifications are
classified by H.~Sakai in \cite{Sakai},
whose geometry characterizes the Painlev\'e equations.
If one wants to formulate the geometry of isomonodromic deformation
in a general framework,
an appropriate construction of the moduli space of connections is required.
Moduli space of regular singular connections are constructed by
N. Nitsure in \cite {Nitsure}, though the moduli space may have singularities.
Moduli space of regular singular connections with parabolic structure
becomes smooth with a symplectic structure,
which is constructed in the joint work \cite{IIS-1} with K. Iwasaki and M.-H. Saito
and in \cite{Inaba-1}.
In the case of rank two connections on $\mathbb{P}^1$
with $4$ regular singular points,
the moduli space of regular singular parabolic connections is isomorphic to
the space of initial conditions of Painlev\'e VI equations
constructed by K.~Okamoto.
Furthermore, we can construct in \cite {IIS-2} a compactified moduli space
which is isomorphic to the Sakai's rational surface in
\cite{Sakai} or in  \cite{Saito-Takebe-Terajima}.
We can obtain the geometric Painlev\'e property (see \cite{IIS-3} and \cite[Definition 2.4]{Inaba-1})
of the isomonodromic deformation on the family of moduli spaces
and then we can say that the constructed moduli space is
the space of initial conditions of the isomonodromic deformation.
Note that we cannot assume the underlying vector bundle trivial,
even when the base curve is $\mathbb{P}^1$ and
the degree of the underlying bundle is zero,
for the purpose of obtaining the geometric Painlev\'e property.

Moduli space of generic unramified connections is analytically constructed by
O.~Biquard and P.~Boalch
in \cite{Biquard-Boalch}.
In \cite{Boalch-1}, P. Boalch gives an algebraic construction of 
the moduli space of unramified irregular singular connections
over the trivial bundle on $\mathbb{P}^1$.
In a general case involving a higher genus curve, the joint work \cite{Inaba-Saito} 
with M.-H. Saito provides an algebraic construction 
of the moduli space of unramified irregular singular connections.
Again we do not  assume the triviality of the underlying bundle even if
the base curve is $\mathbb{P}^1$ and the degree of the bundle is zero.

Compared with the unramified case, the algebraic construction of the moduli space of
ramified irregular singular connections is difficult.
In \cite{Bremer-Sage}, C.~L.~Bremer and D.~S.~Sage
construct the algebraic moduli space of ramified connections over
a trivial bundle on $\mathbb{P}^1$.
They give the characterization of ramified connections
via an exhaustive consideration from
the representation theory.
The method of the construction of the moduli space is
similar to that in \cite{Boalch-1}.
P.~Boalch  constructed in \cite{Boalch-1} the moduli space of
generic unramified irregular singular connections 
which is locally framed at singular points and called it the extended moduli space.
The moduli space of irregular singular connections
is obtained as a symplectic reduction of the extended moduli space.
K.~Hiroe and D.~Yamakawa gave in \cite{Hiroe-Yamakawa}
a clear summary of the construction of the moduli space as a symplectic reduction
of the extended moduli space.
Here we remark that in all these results, the underlying vector bundles on $\mathbb{P}^1$
are assumed to be trivial.

For the character variety side, the moduli spaces were constructed in 
\cite{Boalch-3}, \cite{Boalch-4}, \cite{Boalch-Yamakawa} and \cite{Put-Saito},
which are expected to be related with appropriate moduli spaces of
irregular singular connections via the generalized Riemann-Hilbert correspondence.

In this paper, we construct a moduli space of generic ramified connections
on a smooth projective curve.
For the construction of the moduli space, it is important to give
an appropriate formulation of ramified connections.
In fact, it is necessary to rephrase the formal data
by the data on the restriction $(E,\nabla)|_{mt}$ to each pole divisor,
without depending on a framing.
First we consider a filtration
$E|_{mt}=V_0\supset V_1\supset\cdots\supset V_{r-1}\supset zV_0$
whose idea is similar to that in \cite{Bremer-Sage}.
The filtration is  given by
$\mathbb{C}[w]/(w^{mr})\supset (w)/(w^{mr})\supset\cdots\supset (w^{r-1})/(w^{mr})
\supset(w^r)/(w^{mr})$
when $(\hat{E},\hat{\nabla})=(\mathbb{C}[[w]],\nabla_{\nu})$.
Next we introduce quotient free $\mathbb{C}[w]/(w^{mr-r+1})$-module
$V_k\otimes\mathbb{C}[w]/(w^{mr-r+1})\xrightarrow{\pi_k}L_k$
of rank one for each $k$,
whose meaning is a quotient $\nu(w)$-eigenspace of
$\nabla|_{mt}\otimes\mathrm{id}$.
In order to recover a formal ramified connection,
we need not only the genericity condition on $\nu(w)$
but compatibility conditions between the
data $\{V_k,L_k,\pi_k\}$.
The precise definition is given in Definition \ref  {definition: ramified structure}
and Definition \ref {def:connection-generic-ramified}.
We should be careful that in our general setting, the true formal data
may not be the one given by the exponent if the exponent is not generic.
We adopt this general setting, because of the author's hope to get
a moduli space as a canonical degeneration of the moduli space of
regular singular or unramified irregular singular connections.

Once the formulation of ramified connection is established,
the construction of the moduli space becomes a standard task. 
For the construction of the moduli space
in Theorem \ref {thm-existence-moduli-generic},
we use a locally closed embedding of the moduli space
to the moduli space of parabolic $\Lambda^1_D$-triples constructed in
\cite{IIS-1}. 
The basic idea of the moduli space construction was inspired by Simpson's works
in \cite{Simpson-1} and \cite{Simpson-2},
which is similar to the GIT construction of the moduli space of vector bundles
using  the Quot-scheme.
The smoothness and the dimension counting of the moduli space
in Theorem \ref  {thm:generic-smooth} follow from
a standard deformation theory, using the idea of the methods in
\cite{Inaba-1} and \cite{Inaba-Saito}.
In Theorem \ref  {thm:symplectic-generic-ramified},
we construct a symplectic form on the moduli space of ramified connections.
For the proof of the $d$-closedness of the symplectic form,
we construct a family of moduli spaces of connections,
whose special fiber is the moduli space of ramified connections
and whose generic fiber is the moduli space of regular singular connections.
The $d$-closedness of the symplectic form on the moduli space of ramified connections
follows from that on the moduli space of regular singular connections.
A further consideration on the generic unramified irregular singular locus
provides a non-degeneracy of the symplectic form.

In the earlier version of the preprint, there was a serious error
in the formulation of ramified connection.
The author missed the condition (v) of
Definition \ref {definition: ramified structure}
or the condition (d) of Definition \ref  {def:moduli-functor}.
An example concerning this point is given in the appendix.

\vspace{10pt}

\noindent
{\bf Acknowledgements.}
This work is partially supported by
JSPS Grant-in-Aid for Scientific Research (C) 26400043
and (C) 19K03422.

\section{Formal data of a ramified connection and its paraphrase}

In this section we first recall elementary properties of formal connections of ramified type.

Consider the formal power series ring $\mathbb{C}[[z]]$
and denote by $\mathbb{C}((z))$ its quotient field.
We say that $(W,\nabla)$ is a formal connection over $\mathbb{C}[[z]]$ if
$W$ is a free $\mathbb{C}[[z]]$-module of finite rank and
$\nabla:W \longrightarrow W\otimes\mathbb{C}[[z]]\cdot\dfrac{dz}{z^m}$
is a $\mathbb{C}$-linear map satisfying $\nabla(fa)=a\otimes df+f\nabla(a)$
for $f\in\mathbb{C}[[z]]$ and $a\in W$.
Let $w$ be a variable with $w^l=z$ for a positive integer $l$.
Then the formal power series ring $\mathbb{C}[[w]]$ becomes a free
$\mathbb{C}[[z]]$-module of rank $l$.
Throughout this paper, we drop the subscript in the tensor product
$W\otimes_{\mathbb{C}[[z]]}\mathbb{C}[[w]]$
and simply write
$W\otimes\mathbb{C}[[w]]$.

The following is a fundamental classification theorem
of formal irregular singular connections.

\begin{theorem}[Hukuhara-Turrittin Theorem
(\cite{Babbitt-Varadarajan}, Proposition 1.4.1 or \cite{Sibuya}, Theorem 6.8.1)]
\label {Hukuhara-Turrittin Theorem}
For a formal irregular singular connection
$(W,\nabla)$ over $\mathbb{C}[[z]]$,
there is a positive integer $l$ and for a variable $w$ with $w^l=z$,
there are
$\nu_1(w),\ldots,\nu_s(w)\in\sum_{k=0}^{mr-r}\mathbb{C}w^kdw/w^{ml-l+1}$,
positive integers $r_1,\ldots,r_s$
such that
\[
 (W\otimes\mathbb{C}((w)),\nabla\otimes\mathbb{C}((w)))
 \cong
 (\mathbb{C}((w))^{r_1},d+J(\nu_1(w),r_1))\oplus\cdots
 \oplus(\mathbb{C}((w))^{r_s},d+J(\nu_s(w),r_s)),
\]
where $d+J(\nu_i(w),r_i):\mathbb{C}((w))^{r_i}\longrightarrow\mathbb{C}((w))^{r_i}\dfrac{dz}{z^m}$
is the connection given by
\[
 \begin{pmatrix} f_1 \\ \vdots \\ f_{r_i} \end{pmatrix}
 \mapsto
 \begin{pmatrix} df_1 \\ \vdots \\ df_{r_i} \end{pmatrix}
 +
 \left(
 \begin{pmatrix}
  \nu_i(w) & 0 & \cdots & 0 \\
  0 & \nu_i(w)  & \cdots & 0 \\
  \vdots & \vdots & \ddots & \vdots \\
  0 & 0 & \cdots & \nu_i(w)
 \end{pmatrix}
 +
 \dfrac{dw}{w}
 \begin{pmatrix}
  0 & 1 & 0 & \cdots & 0\\
  0 & 0 & 1 & \cdots & 0\\
  \vdots & \vdots &  \vdots& \ddots & \vdots \\
  0 & 0 & 0 & \cdots & 1 \\
  0 & 0  & 0 & \cdots & 0
 \end{pmatrix}
 \right) 
 \begin{pmatrix} f_1 \\ \vdots \\ f_{r_i} \end{pmatrix}.
\]
Here $\nu_i(w)-\nu_j(w)\notin\mathbb{Z}\dfrac{dw}{w}$ unless $\nu_i(w)=\nu_j(w)$.
\end{theorem}

We note that
$\nu_1(w),\ldots,\nu_s(w) \mod \mathbb{Z}\dfrac{dw}{w}$
are invariants of $(W,\nabla)$.
We say that $\nu_1(w),\ldots,\nu_s(w)$ are generalized eigenvalues of $\nabla$
or exponents of $\nabla$.
In general, a ramified cover $w\mapsto w^l=z$ is necessary for
obtaining the invariants $\nu_1(w),\ldots,\nu_s(w)$.
If we can take all $\nu_1(w),\ldots,\nu_s(w)$
as differential forms in $z$,
we say that $\nabla$ is unramified.

\begin{definition}\rm
We say that a formal connection $(V,\nabla)$ over $\mathbb{C}[[z]]$ is
formally irreducible if there is no subbundle
$0\neq W \subsetneq V$ satisfying
$\nabla(W) \subset W \otimes\dfrac{dz}{z^m}$.
\end{definition}

We can see the following proposition immediately by induction on $\rank W$.

\begin{proposition}
For a formal connection $(W,\nabla)$ over $\mathbb{C}[[z]]$
with $\rank_{\mathbb{C}[[z]]}W<\infty$,
there is a filtration
$0=W_s\subset W_{s-1}\subset \cdots\subset W_0=W$
such that $\nabla(W_i)\subset W_i\otimes\dfrac{dz}{z^m}$,
each $W_i/W_{i+1}$ is a free $\mathbb{C}[[z]]$-module
and $(W_i/W_{i+1},\nabla_{W_i/W_{i+1}})$ is formally irreducible,
where $\nabla_{W_i/W_{i+1}}$ is the connection on $W_i/W_{i+1}$
induced by $\nabla|_{W_i}$.
\end{proposition}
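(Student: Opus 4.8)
The plan is to argue by induction on $\rank_{\mathbb{C}[[z]]} W$, exactly as the author suggests. The base case $\rank W\le 1$ is immediate: a module of rank zero or one admits no subbundle $W'$ with $0\neq W'\subsetneq W$, so $(W,\nabla)$ is formally irreducible and the one-step filtration $0=W_1\subset W_0=W$ already satisfies all the requirements. For the inductive step I assume the statement for every formal connection of rank strictly less than $\rank W$. If $(W,\nabla)$ is itself formally irreducible we are done with $s=1$; otherwise the definition of formal irreducibility hands us a subbundle $0\neq W'\subsetneq W$ with $\nabla(W')\subset W'\otimes\frac{dz}{z^m}$.

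Before descending, I would record the one genuinely technical point, namely that we may always take invariant submodules to be \emph{saturated} (equivalently, subbundles, since $\mathbb{C}[[z]]$ is a discrete valuation ring and a finitely generated torsion-free module over it is free). Concretely, if $W'$ is any $\nabla$-invariant submodule then so is its saturation $\widetilde{W'}=(W'\otimes\mathbb{C}((z)))\cap W$: for $v\in\widetilde{W'}$ choose $0\neq f\in\mathbb{C}[[z]]$ with $fv\in W'$, and from $\nabla(fv)=v\otimes df+f\nabla(v)$, together with $\nabla(fv)\in W'\otimes\frac{dz}{z^m}$ and $v\otimes df\in\widetilde{W'}\otimes\frac{dz}{z^m}$ (note $df\in\mathbb{C}[[z]]\cdot\frac{dz}{z^m}$), we obtain $f\nabla(v)\in\widetilde{W'}\otimes\frac{dz}{z^m}$; since $\widetilde{W'}\otimes\frac{dz}{z^m}$ is saturated in $W\otimes\frac{dz}{z^m}$, this forces $\nabla(v)\in\widetilde{W'}\otimes\frac{dz}{z^m}$, as desired.

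With this in hand the induction closes cleanly. Replacing $W'$ by its saturation, $\nabla$ descends to a connection $\bar\nabla$ on the free quotient $W/W'$, and both $W'$ and $W/W'$ have rank strictly less than $\rank W$. By the inductive hypothesis each carries a filtration by saturated invariant subbundles with formally irreducible successive quotients. I then lift the filtration of $W/W'$ to $W$ by taking preimages under the projection $W\to W/W'$: the preimage $U$ of a $\bar\nabla$-invariant saturated subbundle $\bar U$ is again $\nabla$-invariant, because $\nabla(u)\bmod W'\otimes\frac{dz}{z^m}=\bar\nabla(\bar u)\in\bar U\otimes\frac{dz}{z^m}$, and is saturated because $W/U\cong (W/W')/\bar U$ is torsion-free. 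Splicing the filtration of $W'$ with this lifted filtration of $W/W'$ yields a chain $0\subset\cdots\subset W'\subset\cdots\subset W$ of saturated $\nabla$-invariant subbundles whose consecutive quotients are precisely the formally irreducible quotients furnished by the two shorter filtrations; since every term is saturated in $W$, every consecutive quotient is free, completing the induction.

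The one place that deserves care — and what I regard as the main (if modest) obstacle — is the freeness bookkeeping: one must be sure that passing from invariant submodules to subbundles is harmless, which is exactly the saturation lemma above combined with the DVR fact that finitely generated torsion-free $\mathbb{C}[[z]]$-modules are free. Everything else is formal and requires no computation with the Hukuhara--Turrittin normal form.
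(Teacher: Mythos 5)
Your proof is correct and follows exactly the route the paper indicates: the paper offers no written proof beyond the remark that the statement follows ``immediately by induction on $\rank W$,'' and your argument is precisely that induction, with the saturation lemma and the DVR freeness facts supplying the routine details the author left implicit.
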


We will see what data determines the formal connection
$(W_i/W_{i+1},\nabla_{W_i/W_{i+1}})$.
So we assume that $(W,\nabla)$ is formally irreducible for simplicity.
Thanks to the Hukuhara-Turrittin Theorem (Theorem \ref {Hukuhara-Turrittin Theorem}),
there is a ramified cover $w\mapsto w^l=z$ and
a generalized eigenvalue $\nu(w)\in\mathbb{C}[[w]]\dfrac{dz}{z^m}$ of
$(W\otimes\mathbb{C}((w)),\nabla\otimes\mathbb{C}((w)))$.
In other words, there is an eigen vector $v\in W\otimes\mathbb{C}((w))$,
that is, $\nabla (v)=\nu(w) v$.
If we take a generator $\sigma$ of $\mathrm{Gal}(\mathbb{C}((w))/\mathbb{C}((z)))$,
then the $\mathbb{C}((w))$-subspace generated by
$\{\sigma^k(v)|0\leq k\leq l-1\}$
descends to a subspace $W'$ of $W\otimes\mathbb{C}((z))$.
If we put $\tilde{W'}:=W\cap W'$, then  $\tilde{W'}$ is a subbundle of $W$
preserved by $\nabla$ and $\tilde{W'}\otimes\mathbb{C}((z))=W'$.
Since $(W,\nabla)$ is irreducible,
we have $\tilde{W'}=W$.
If $j$ is the minimum among
$0<j\leq l$ satisfying $\sigma^j(\nu(w))=\nu(w)$,
then $j|l$ and $\nu(w)\in\mathbb{C}[[w^{\frac{l}{j}}]]\dfrac{dz}{z^m}$.
So we may replace $w$ by $w^{\frac{l}{j}}(=z^{\frac{1}{j}})$
and then  $\{\sigma^k(\nu(w))|0\leq k\leq l-1\}$
are distinct.
We have $l=\rank W$ in this case. 
For an eigenvector $v\in W\otimes\mathbb{C}((w))$, we have
\[
 \nabla(\sigma^k(v))=\sigma^k(\nabla(v))=\sigma^k(\nu(w)v)=\sigma^k(\nu(w))\sigma^k(v).
\]
So $v,\sigma(v),\ldots,\sigma^{l-1}(v)$ are linearly independent over $\mathbb{C}((w))$,
because they are eigenvectors of $\nabla$ with the distinct eigenvalues
$\nu(w),\sigma(\nu(w)),\ldots,\sigma^{l-1}(\nu(w))$.
So we can see that $W\otimes\mathbb{C}((w))$ has  $\sigma^k(\nu(w))$-eigensubspaces,
but we can also regard that $W\otimes\mathbb{C}((w))$ has
quotient $\sigma^k(\nu(w))$-eigenspaces.
Replacing $\nu(w)$ by adding an element of $\mathbb{Z}\dfrac{dw}{w}$,
we may assume that
there is a surjection
$\pi\colon W\otimes\mathbb{C}[[w]]\longrightarrow\mathbb{C}[[w]]$
such that the diagram
\[
 \begin{CD}
  W\otimes\mathbb{C}[[w]] @>\pi>> \mathbb{C}[[w]] \\
  @V\nabla\otimes\mathrm{id} VV  @VV\nabla_{\nu}V  \\
  W\otimes\mathbb{C}[[w]]\cdot\dfrac{dz}{z^m}
  @>\pi\otimes\mathrm{id}>> \mathbb{C}[[w]]\dfrac{dz}{z^m}
 \end{CD}
\]
is commutative,
where $\nabla_{\nu}$ is given by $\nabla_{\nu}(f(w))=df(w)+f(w)\nu(w) $ 
for $f(w)\in\mathbb{C}[[w]]$.
Note that $W\xrightarrow{\pi|_W}\mathbb{C}[[w]]$ is injective,
because $(W,\nabla)$ is formally irreducible.
So $\coker(\pi|_W)$ is of finite length,
since $\rank_{\mathbb{C}[[z]]}W=\rank_{\mathbb{C}[[z]]}\mathbb{C}[[w]]=l$.

Conversely, assume that
$\nu(w)\in\sum_{s=0}^{mr-r}\mathbb{C}w^s dw/w^{mr-r+1}$
is given for $w=z^{\frac{1}{r}}$ such that
$\tau\nu(w)\neq\nu(w)$ for any $\tau\in\mathrm{Gal}(\mathbb{C}((w))/\mathbb{C}((z)))$
other than $\mathrm{id}$.
If $\mathbb{C}[[w]]\xrightarrow{q}Q$ is a quotient $\mathbb{C}[[z]]$-module
of finite length satisfying the commutative diagram
\[
 \begin{CD}
  \mathbb{C}[[w]] @>q>> Q \\
  @V\nabla_{\nu}VV   @VV\nabla_{Q}V \\
  \mathbb{C}[[w]]\otimes\dfrac{dz}{z^m} @>q\otimes\mathrm{id}>> 
  Q\otimes\dfrac{dz}{z^m},
 \end{CD}
\]
for some morphism $\nabla_{Q}$,
then a formal connection
$\nabla|_{\ker q}\colon \ker q\longrightarrow \ker q\otimes dz/z^m$
is induced.
If $0\neq U\subset \ker q$ is a subbudle preserved by
$\nabla|_{\ker q}$,
then we have
\[
 (U,\nabla|_U)\otimes_{\mathbb{C}[[z]]}\mathbb{C}((w))
 \subset
 (\ker q,\nabla|_{\ker q})\otimes_{\mathbb{C}[[z]]}\mathbb{C}((w))
 \cong
 \bigoplus_{k=0}^{r-1} \big( \mathbb{C}((w)),\nabla_{\sigma^k(\nu(w))} \big),
\]
where $\sigma$ is a generator of
$\mathrm{Gal}(\mathbb{C}((w))/\mathbb{C}((z)))$
and $\big( \mathbb{C}((w)),\nabla_{\sigma^k(\nu(w))} \big)$
means the $\sigma^k(\nu(w))$-eigenspace.
In particular, we can choose a vector
$0\neq u\in U\otimes\mathbb{C}((w))$
satisfying $\nabla(u)=\sigma^k(\nu(w))u$
for some $k$.
By the assumption, $\nu(w),\sigma(\nu(w)),\ldots,\sigma^{r-1}(\nu(w))$
are mutually distinct.
So 
$u,\sigma(u),\ldots,\sigma^{r-1}(u)$
are linearly independent over $\mathbb{C}((w))$,
because they are the eigenvectors with distinct eigenvalues
$\sigma^k(\nu(w)),\sigma^{k+1}(\nu(w)),\ldots,\sigma^{r-1+k}(\nu(w))$.
Thus $\rank_{\mathbb{C}[[z]]} U=\dim_{\mathbb{C}((w))}(U\otimes\mathbb{C}((w)))=r$
which implies $U=\ker q$.
Hence $(\ker q,\nabla|_{\ker q})$ is a formally irreducible connection.

Summarizing the above arguments, we obtain the following proposition:

\begin{proposition}
\label {proposition: characterization of ramified connection}
A formal connection $(W,\nabla)$ of rank $r$ over $\mathbb{C}[[z]]$
is irreducible if and only if it is isomorphic to $(\ker q,\nabla_{\nu}|_{\ker q})$,
where
$\mathbb{C}[[w]]\stackrel{q}\longrightarrow Q$ is a quotient $\mathbb{C}[[z]]$-module
of finite length for  $w=z^{\frac{1}{r}}$
and 
$\nu(w)\in \sum_{k=0}^{mr-r}\mathbb{C}w^kdw/w^{mr-r+1}$
is not fixed by the elements of $\mathrm{Gal}(\mathbb{C}((w))/\mathbb{C}((z)))$
other than $\mathrm{id}$,
such that the diagram
\[
 \begin{CD}
 \mathbb{C}[[w]] @>q>> Q \\
 @V \nabla_{\nu} VV @VV \nabla_Q V \\
 \mathbb{C}[[w]]\otimes\dfrac{dz}{z^m}  @>q\otimes\mathrm{id}>> Q\otimes\dfrac{dz}{z^m} 
 \end{CD}
\] 
is commutative
for a morphism $\nabla_Q\colon Q\longrightarrow Q\otimes dz/z^m$
satisfying $Q(f(z)a)=a\otimes df(z)+f(z)a$ ($a\in Q$, $f(z)\in\mathbb{C}[[z]]$).
\end{proposition}

\begin{remark} \label {remark: formal quotient}
\rm
Let $(W,\nabla)$ be a formal irreducible connection over $\mathbb{C}[[z]]$
of rank $r$
and take a variable $w$ with $w^r=z$.
Then we can see from the proof of 
Proposition \ref {proposition: characterization of ramified connection}
that there is a surjection
$\pi\colon W\otimes\mathbb{C}[[w]]\longrightarrow \mathbb{C}[[w]]$
whose restriction $\pi|_W\colon W\longrightarrow \mathbb{C}[[w]]$
is generically injective with finite length cokernel,
such that the diagram
\[
 \begin{CD}
  W\otimes\mathbb{C}[[w]] @>\pi >> \mathbb{C}[[w]]  \\
  @V \nabla\otimes\mathrm{id} VV   @VV \nabla_{\nu} V \\
  W\otimes\mathbb{C}[[w]]\otimes\dfrac{dz}{z^m}@>\pi\otimes \mathrm{id}>> 
  \mathbb{C}[[w]]\otimes \dfrac{dz}{z^m}
 \end{CD}
\]
is commutative.
If $\sigma$ is a generator of the Galois group
$\mathrm{Gal}(\mathbb{C}((w))/\mathbb{C}((z)))$,
\[
 \sigma^{k}\circ\pi\circ(1\otimes\sigma)^{-k}\colon
 W\otimes\mathbb{C}[[w]]\longrightarrow\mathbb{C}[[w]]
\]
is a surjective homomorphism of $\mathbb{C}[[w]]$-modules,
which makes the diagram
\[
 \begin{CD}
  W\otimes\mathbb{C}[[w]] @> \sigma^{k}\circ\pi\circ(1\otimes\sigma)^{-k} >> \mathbb{C}[[w]]  \\
  @V \nabla\otimes\mathrm{id} VV   @VV \nabla_{\sigma^k(\nu)} V \\
  W\otimes\mathbb{C}[[w]]\otimes\dfrac{dz}{z^m}
  @>(\sigma^{k}\circ\pi\circ(1\otimes\sigma)^{-k})\otimes \mathrm{id}>> 
  \mathbb{C}[[w]]\otimes \dfrac{dz}{z^m}
 \end{CD}
\]
commutative for $0\leq k\leq r-1$.
\end{remark}

\begin{lemma}
\label {lem:generic-1}
Let $(W,\nabla)$ be a formal irreducible connection of rank $r$
over $\mathbb{C}[[z]]$ with a quotient
$\pi\colon W\otimes\mathbb{C}[[w]]\longrightarrow\mathbb{C}[[w]]$
for $w^r=z$ satisfying the commutative diagram
\[
 \begin{CD}
 W\otimes\mathbb{C}[[w]] @>\pi>> \mathbb{C}[[w]] \\
 @V\nabla\otimes\mathrm{id}VV  @VV\nabla_{\nu} V \\
 W\otimes\mathbb{C}[[w]]\otimes\dfrac{dz}{z^m} @>\pi\otimes\mathrm{id}>>
 \mathbb{C}[[w]]\otimes\dfrac{dz}{z^m},
 \end{CD}
\]
where $\nu(w)\in\sum_{l=0}^{mr-r}\mathbb{C}w^kdw/w^{mr-r+1}$.
Assume that 
the $\dfrac{wdw}{w^{mr-r+1}}$-coefficient
of $\nu(w)$ is non-zero.
Then the restriction
\[
 \pi|_W\colon W\xrightarrow{\sim}\mathbb{C}[[w]]
\] 
is in an isomorphism.
\end{lemma}

\begin{proof}
We can write
\[
 \nu(w)=\nu_0(z)+\nu_1(z)w+\cdots+\nu_{r-1}(z)w^{r-1}
\]
with $\nu_k(z)\in\mathbb{C}[z]dz/z^m$.
By the assumption,
$\nu_1(z)$ is a generator of $\mathbb{C}[[z]]dz/z^m$.
So we can write
$\nu_1(z)=(c_0+c_1z+\cdots+c_{m-2}z^{m-2})dz/z^m$
with $c_0\neq 0$.
Consider the connection
\[
 \nabla-\nu_0(z)\mathrm{id}\colon \
 W \ \ni \ v \ \mapsto \ \nabla(v)-\nu_0(z)v
 \ \in \ W\otimes\frac{dz}{z^m}.
\]
Then we have the commutative diagram
\[
 \begin{CD}
 W\otimes\mathbb{C}[[w]] @>\pi>> \mathbb{C}[[w]] \\
 @V(\nabla-\nu_0(z)\mathrm{id})\otimes\mathrm{id}VV  @VV\nabla_{\nu(w)-\nu_0(z)} V \\
 W\otimes\mathbb{C}[[w]]\otimes\dfrac{dz}{z^m} @>\pi\otimes\mathrm{id}>>
 \mathbb{C}[[w]]\otimes\dfrac{dz}{z^m}.
 \end{CD}
\]
In particular, $\pi(W)\subset\mathbb{C}[[w]]$ is preserved by
$\nabla_{\nu(w)-\nu_0(z)}$.
Since $\pi$ is surjective, there exists $v\in W$
satisfying
$\pi(v)=1+wa_0(w)$ for some $a_0(w)\in\mathbb{C}[[w]]$.
We can write
$\nu(w)-\nu_0(z)=c_0w(1+wb_1(w))dw/w^{mr-r+1}$
for some $b_1(w)\in\mathbb{C}[[w]]$.
So we can write
\begin{align*}
 \pi((\nabla-\nu_0(z)\mathrm{id})(v))
 &=
 (\nabla_{\nu(w)-\nu_0(z)})(\pi(v))
 \\
 &=
 \left( c_0w+(c_0b_1(w)+c_0a_0(w))w^2+(\text{higher order terms})\right)dw/w^{mr-r+1}.
\end{align*}
In particular, $\pi(V)$ contains an element of the form
$w(1+wa_1(w))$ for some $a_1(w)\in\mathbb{C}[[w]]$.
Repeating this procedure, we can construct elements
\[
 1+wa_0(w),w(1+wa_1(w)),\ldots,w^{r-1}(1+wa_{r-1}(w))
\]
of $\pi(W)$.
Since all the elements of $\mathbb{C}[[w]]$ can be written as
a $\mathbb{C}[[z]]$-linear combination of the above elements,
we get the surjectivity of $\pi|_W\colon W\longrightarrow \mathbb{C}[[w]]$.
\end{proof}

\begin{remark} \rm
In this paper we do not treat formal connections with $\coker(\pi|_W)=Q$ non-trivial.
We mainly treat connections satisfying $\coker(\pi|_W)=0$ and we will say such
type of connection a generic ramified type.
A precise definition of connections of generic ramified type is given in
Definition \ref{def:connection-generic-ramified},
while the essential part of the definition is given in
Definition \ref {definition: ramified structure}.
\end{remark}

Consider the formal connection
$(W,\nabla)\cong(\mathbb{C}[[w]],\nabla_{\nu})$.
Then we get a filtration
\[
 W=\hat{V}_0\supset \hat{V}_1\supset\cdots\supset \hat{V}_{r-1}\supset \hat{V}_r=z\hat{V}_0
\]
by pulling back the filtration
$\mathbb{C}[[w]]\supset (w)\supset(w^2)\supset\cdots\supset(w^{r-1})\supset (w^r)$.
There is a canonical surjection
$\pi\colon W\otimes_{\mathbb{C}[[z]]}\mathbb{C}[[w]]\longrightarrow\mathbb{C}[[w]]$
which induces commutative diagrams
\[
 \begin{CD}
 \hat{V}_k\otimes_{\mathbb{C}[[z]]}\mathbb{C}[[w]] 
 @>\pi|_{\hat{V}_k}\otimes\mathrm{id}_{\mathbb{C}[[w]]} >> (w^k) \\
 @V\nabla|_{\hat{V}_k}\otimes\mathrm{id}VV  @VV\nabla_{\nu(w)}|_{(w^k)} V \\
 \hat{V}_k\otimes_{\mathbb{C}[[z]]}\mathbb{C}[[w]]\otimes\dfrac{dz}{z^m} 
 @>\pi|_{\hat{V}_k}\otimes\mathrm{id}_{\mathbb{C}[[w]]} >>
 (w^k)\otimes\dfrac{dz}{z^m}
 \end{CD}
\]
for $0\leq k\leq r-1$.
Note that the restriction
$\nabla_{\nu}|_{(w^k)}$ is given by
\[
 \nabla_{\nu(w)} \left( w^kf(w) \right) 
 =
 d(w^k)f(w)+w^kdf(w)+w^kf(w)\nu(w)
 =w^k \left( \nabla_{\nu(w)}+\frac{k}{r}\frac{dz}{z}\mathrm{id} \right)(f(w)).
\]
Let us consider the restriction of the above data to the finite subscheme
defined by $z^m=0$.
So the filtration $\hat{V}_k$ induces a filtration
\[
 W\otimes_{\mathbb{C}[[z]]}\mathbb{C}[z]/(z^m)=V_0\supset V_1\supset\cdots\supset V_{r-1}
 \supset V_r=zV_0.
\]
Define
\begin{align*}
 \bar{\pi}_k
 &:=
 (\pi|_{\hat{V}_k\otimes\mathbb{C}[[w]] })\otimes\mathrm{id}_{\mathbb{C}[w]/(w^{mr-r+1})}
 \colon
 V_k\otimes_{\mathbb{C}[z]/(z^m)}\mathbb{C}[w]/(w^{mr-r+1})\longrightarrow (w^k)/(w^{k+mr-r+1})
 \\
 \bar{\nabla}_k
 &:=
 \nabla|_{z^m=0} \colon
 V_k \longrightarrow V_k\otimes\frac{dz}{z^m}.
\end{align*}
Then there are commutative diagrams
\[
 \begin{CD}
  V_k\otimes_{\mathbb{C}[z]/(z^m)}\mathbb{C}[w]/(w^{mr-r+1}) 
  @>\bar{\pi}_k>> (w^k)/(w^{k+mr-r+1}) \\
  @V\bar{\nabla}_k\otimes\mathrm{id} VV  @VV\nabla_{\nu(w)+\frac{k}{r}\frac{dz}{z}}V \\
  V_k\otimes\dfrac{dz}{z^m}\otimes_{\mathbb{C}[z]/(z^m)}\mathbb{C}[w]/(w^{mr-r+1}) 
  @>\bar{\pi}_k\otimes \mathrm{id} >> (w^k)/(w^{k+mr-r+1})\otimes\dfrac{dz}{z^m} 
 \end{CD}
\]
for $0\leq k\leq r-1$.

There are compatibility conditions among
$V_k,\pi_k$ for $k=0,1\ldots,r-1$.
Later in Proposition \ref  {prop:generic-2},
we will prove that the compatibility conditions can recover a formal
ramified connection under a genericity condition
on the exponent $\nu(w)$.
We summarize the conditions in the following definition.
The advantage of the definition is to be fit for the
formulation of global moduli of ramified connections.

\begin{definition}
\label {definition: ramified structure}
\rm
Let $w$ be a variable with $w^r=z$ and take
$\nu(w)\in\sum_{l=0}^{mr-r}\mathbb{C}w^ldw/w^{mr-r+1}$.
Assume that
$(\overline{W},\overline{\nabla})$ is a pair of a free $\mathbb{C}[z]/(z^m)$-module
$\overline{W}$ of rank $r$
and a homomorphism
$\overline{\nabla}\colon \overline{W} \longrightarrow \overline{W}\otimes dz/z^m$
of $\mathbb{C}[z]/(z^m)$-modules.
We say that 
$\big( (V_k,L_k,\pi_k)_{0\leq k\leq r-1},(\phi_k)_{1\leq k\leq r} \big)$
is a $\nu(w)$-ramified structure 
on $(\overline{W},\overline{\nabla})$ if
\begin{enumerate}
\item[(i)]
$\overline{W}
=V_0\supset V_1\supset\cdots\supset V_{r-1}\supset zV_0=:V_r$
is a filtration satisfying
$\overline{\nabla}(V_k)\subset V_k\otimes dz/z^m$
and $\length(V_k/V_{k+1})=1$ for $0\leq k\leq r-1$,
\item[(ii)]
$\pi_k\colon V_k\otimes\mathbb{C}[w]/(w^{mr-r+1})\longrightarrow L_k$
is a quotient free $\mathbb{C}[w]/(w^{mr-r+1})$-module of rank one
such that the restriction
$\pi_k|_{V_k}\colon V_k\longrightarrow L_k$ is surjective
for $0\leq k\leq r-1$,
\item[(iii)]
the diagrams
\[
 \begin{CD}
 V_k\otimes\mathbb{C}[w]/(w^{mr-r+1})  
 @> \pi_k  >> L_k  \\
 @V\overline{\nabla}|_{V_k}\otimes\mathrm{id}  VV   @VV \nu(w)+\frac{k}{r}\frac{dz}{z} V \\
 V_k\otimes \dfrac{dz}{z^m}\otimes\mathbb{C}[w]/(w^{mr-r+1})  
 @>\pi_k\otimes\mathrm{id} >> L_k\otimes\dfrac{dz}{z^m}
 \end{CD}
\]
are commutative for $0\leq k\leq r-1$,
\item[(iv)]
$\phi_k\colon L_k\longrightarrow L_{k-1}$
are homomorphisms of $\mathbb{C}[w]$-modules
with factorizations
\[
 \phi_k \colon
  L_k\xrightarrow [\sim] {\psi_k} (w)/(w^{mr-r+2})\otimes L_{k-1}
  \longrightarrow wL_{k-1}\hookrightarrow L_{k-1}
\]
for isomorphisms
$\psi_k \colon L_k\xrightarrow{\sim}
(w)/(w^{mr-r+2})\otimes L_{k-1}$
of $\mathbb{C}[w]$-modules
for $1\leq k\leq r-1$
and $\phi_r\colon (w^r)/(w^{mr+1})\otimes L_0\longrightarrow L_{r-1}$
is a $\mathbb{C}[w]$-homomorphism whose image is $wL_{r-1}$
such that
the diagram
\[
 \begin{CD}
  (z)/(z^{m+1})\otimes V_0\otimes\mathbb{C}[w]/(w^{mr-r+1}) 
  @> \mathrm{id}\otimes\pi_0  >> (w^r)/(w^{mr+1})\otimes L_0  \\
  @V VV  @V \phi_r VV \\
  V_{r-1}\otimes\mathbb{C}[w]/(w^{mr-r+1})     @> \pi_{r-1}   >> L_{r-1}
 \end{CD}
\]
is commutative and the diagrams
\[
 \begin{CD}
  V_k\otimes\mathbb{C}[w]/(w^{mr-r+1})   @>\pi_k|_{V_k} >>  L_k  \\
  @VVV  @VV  \phi_k V \\
  V_{k-1}\otimes\mathbb{C}[w]/(w^{mr-r+1})   @>\pi_{k-1}|_{V_{k-1}} >> L_{k-1}
 \end{CD}
\]
are commutative for $1\leq k\leq r-1$,
\item[(v)]
the composition
\begin{align} \label {equation: key coincidence with composition}
\begin{split}
 (w^r)\otimes L_0 
 \xrightarrow{\phi_r} L_{r-1}
 \xrightarrow[\sim]{\psi_{r-1}}
 (w)\otimes L_{r-2}
 &\xrightarrow[\sim]{\mathrm{id}_{(w)}\otimes\psi_{r-2}}
 (w^2)\otimes L_{r-3}
 \\
 &\xrightarrow[\sim]{\mathrm{id}_{(w^2)}\otimes\psi_{r-3}}\cdots
 \xrightarrow[\sim]{\mathrm{id}_{(w^{r-2})}\otimes\psi_1}
 (w^{r-1})\otimes L_0
\end{split}
\end{align}
coincides with the homomorphism
canonically induced by
$(w^r)\longrightarrow (w^{r-1})$.
\end{enumerate}
\end{definition}

\begin {remark}\rm
The condition of Definition \ref {definition: ramified structure} (v)
is independent of the choice of the lifts $\psi_k$ of $\phi_k$ chosen in (iv).
Indeed, the composition (\ref  {equation: key coincidence with composition})
in Definition \ref {definition: ramified structure} (v)
is determined only by $(\phi_k)_{1\leq k\leq r}$.
\end{remark}

\begin{proposition} \label {prop:generic-2} \rm
Assume that
$(W,\nabla)$ is a formal connection of rank $r$ over $\mathbb{C}[[z]]$
such that $(W,\nabla)\otimes\mathbb{C}[z]/(z^m)$
has a $\nu(w)$-ramified structure defined
in Definition \ref {definition: ramified structure},
where $w$ is a variable with $w^r=z$.
Furthermore, assume that 
the $w\, dw/w^{mr-r+1}$-coefficient of  $\nu(w)$
does not vanish.
Then
there is a formal isomorphism
$(W,\nabla)
\cong (\mathbb{C}[[w]],\nabla_{\nu(w)})$.
\end{proposition}

\begin{proof}
We write
$\nu(w)=(a_0(z)+a_1(z)w+\cdots+a_{r-1}(z)w^{r-1})dz/z^m$
for polynomials $a_0(z),\ldots,a_{r-1}(z)$ in $z$.
By the assumption, $a_0(z)$ is of degree at most $m-1$,
the degrees of $a_1(z),\ldots,a_{r-1}(z)$ are at most $m-2$
and the constant term $a_1(0)$ of $a_1(z)$ does not vanish.
Let
\[
 N'\colon W\otimes \mathbb{C}[z]/(z^{m-1})
 \longrightarrow  W\otimes \mathbb{C}[z]/(z^{m-1})
\]
be the endomorphism of $\mathbb{C}[z]/(z^{m-1})$-module
determined by
\[
 N'\dfrac{dz}{z^m}
 =
 \nabla\otimes\mathrm{id}_{\mathbb{C}[z]/(z^{m-1})}
 -\nu_0(z)\mathrm{id}_{W\otimes\mathbb{C}[z]/(z^{m-1})}
 \colon W\otimes\mathbb{C}[z]/(z^{m-1})
 \longrightarrow
 W\otimes\mathbb{C}[z]/(z^{m-1})\otimes\frac{dz}{z^m}.
\]
If we put $b(w):=a_1(z)w+a_2(z)w^2+\cdots+a_{r-1}(z)w^{r-1}$,
then the diagram
\[
 \begin{CD}
  W\otimes\mathbb{C}[z]/(z^{m-1}) 
  @>\pi_0|_{\bar{W}}\otimes\mathbb{C}[z]/(z^{m-1})>\cong> 
  L_0/w^{mr-r}L_0\cong\mathbb{C}[w]/(w^{mr-r}) \\
  @V N' VV  @VV b(w) V \\
  W\otimes\mathbb{C}[z]/(z^{m-1}) 
  @>\pi_0|_{\bar{W}}\otimes\mathbb{C}[z]/(z^{m-1})>\cong> 
  L_0/w^{mr-r}L_0\cong\mathbb{C}[w]/(w^{mr-r})
 \end{CD}
\]
is commutative.
Note that
$p_0:=\pi|_{\overline{W}}\otimes\mathbb{C}[z]/(z^{m-1})
\colon W\otimes\mathbb{C}[z]/(z^{m-1}) \longrightarrow
L_0/w^{mr-r}L_0$
is isomorphic, because
$\pi_0|_{\overline{W}}\colon\overline{W}=V_0\longrightarrow L_0\cong\mathbb{C}[w]/(w^{mr-r+1})$
is surjective by the assumption (ii) of
Definition \ref {definition: ramified structure}
and 
$\length(W\otimes\mathbb{C}[z]/(z^{m-1}))=r(m-1)=
\length(L_0/w^{mr-r}L_0)$. 
Using the condition $a_1(0)\neq 0$,
we can see that
\begin{align} \label {equation: generic nilpotent endomorphism}
\begin{split}
 \im (N')^{r-1} 
 &\not\subset z\big(W\otimes\mathbb{C}[z]/(z^{m-1})\big)
 \\
 \im (N')^r
 &\subset z\big( W\otimes\mathbb{C}[z]/(z^{m-1}) \big).
\end{split}
\end{align}

Assume that there is a subbundle
$0\neq U\subset W$ preserved by $\nabla$.
Then $U/z^{m-1}U\subset W/z^{m-1}W$ is a subbundle
preserved by $N'$.
In particular, $p_0(U/z^{m-1}U)\subset L_0/w^{mr-r}L_0$
is a $\mathbb{C}[z]/(z^{m-1})$-subbundle
preserved by $b(w)$.
So  $p_0(U/z^{m-1}U)$ contains an element of the form
$w^k(1+w\beta(w))$ with $0\leq k\leq r-1$.
Thus we can see that $p_0(U/z^{m-1}U)$ contains
$z\bar{W}$, since it is preserved by $b(w)$ satisfying $a_1(0)\neq 0$.
Since $U$ is a subbundle of $W$,
we have $U=W$ and we have proved that
$(W,\nabla)$ is a formal irreducible connection.

By Proposition \ref {proposition: characterization of ramified connection}
and Remark \ref  {remark: formal quotient},
there is 
$\nu'(w)\in\sum_{l=0}^{mr-r}\mathbb{C}w^ldw/w^{mr-r+1}$
and an injection
$W\hookrightarrow \mathbb{C}[[w]]$
such that the induced homomorphism
$\pi'\colon W\otimes\mathbb{C}[[w]]\longrightarrow \mathbb{C}[[w]]$
is surjective and that the diagram
\[
 \begin{CD}
  W\otimes\mathbb{C}[[w]] @>\pi' >> \mathbb{C}[[w]] \\
  @V \nabla\otimes\mathrm{id} VV  @VV \nabla_{\nu'(w)}V \\
  W\otimes\dfrac{dz}{z^m}\otimes\mathbb{C}[[w]] 
  @>\pi'\otimes\mathrm{id}>> \mathbb{C}[[w]]\otimes\dfrac{dz}{z^m}
 \end{CD}
\]
is commutative.
Write
$\nu'(w)=(a'_0(z)+a'_1(z)w+\cdots+a'_{r-1}(z)w^{r-1})dz/z^m$.
If we put
\[
 b'(w):=a'_0(z)-a_0(z)+a'_1(z)w+\cdots+a'_{r-1}(z)w^{r-1},
\]
then $\pi'$ induces a commutative diagram
\[
 \begin{CD}
  W\otimes\mathbb{C}[z]/(z^{m-1})  @> \pi'|_W\otimes\mathrm{id}_{\mathbb{C}[z]/(z^{m-1})} >>
  \mathbb{C}[w]/(w^{mr-r}) \\
  @V N'\otimes\mathrm{id} VV  @VV b'(w) V \\
  W\otimes\mathbb{C}[z]/(z^{m-1})  @> \pi'|_W\otimes\mathrm{id}_{\mathbb{C}[z]/(z^{m-1})} >>
  \mathbb{C}[w]/(w^{mr-r}).
 \end{CD}
\]
Since $\pi'\colon W\otimes\mathbb{C}[[w]]\longrightarrow \mathbb{C}[[w]]$ is surjective, 
$\im \big(\pi'|_W\otimes\mathrm{id}_{\mathbb{C}[z]/(z^{m-1})}\big)$
contains a generator of 
$\mathbb{C}[w]/(w^{mr-r})$
as a $\mathbb{C}[w]$-module.
Then, we can see from the property (\ref {equation: generic nilpotent endomorphism})
of $N'$ that
$b'(w)^{r-1}\notin z\mathbb{C}[w]/(w^{mr-r})$
and
$b'(w)^r\in z\mathbb{C}[w]/(w^{mr-r})$.
So we have $a'_0(z)\equiv a_0(z) \pmod{z}$
and $a'_1(0)\neq 0$.
Thus 
\[
 \pi'|_W\otimes\mathrm{id}_{\mathbb{C}[z]/(z^{m-1})}
 \colon W\otimes\mathbb{C}[z]/(z^{mr-r})
 \longrightarrow \mathbb{C}[w]/(w^{mr-r})
\]
is surjective and then it is isomorphic because
$\length (W\otimes\mathbb{C}[z]/(z^{m-1}))=\length(\mathbb{C}[w]/(w^{mr-r}))$.
Since the composition
\[
 \mathbb{C}[w]/(w^{mr-r}) 
 \xrightarrow {(\pi'|_W\otimes\mathrm{id}_{\mathbb{C}[z]/(z^{m-1})})\circ p_0^{-1}}
 \mathbb{C}[w]/(w^{mr-r})
\]
is an isomorphism of $\mathbb{C}[w]/(w^{mr-r})$-modules, we have
$b(w)\equiv b'(w) \pmod{w^{mr-r}}$,
 which means
$\nu(w)\equiv \nu'(w) \pmod{w^{mr-r}dw/w^{mr-r+1}}$.
In particular,
$\pi'|_W\colon W\longrightarrow\mathbb{C}[[w]]$ is an isomorphism
by Lemma \ref {lem:generic-1}.

We get a basis of $W\otimes\mathbb{C}[z]/(z^m)$
by pulling back the basis $1,w,\ldots,w^{r-1}$ of $\mathbb{C}[w]/(w^{mr})$
via the isomorphism $\pi'|_W\otimes\mathbb{C}[z]/(z^m)$
and the representation matrix of $\nabla\otimes\mathbb{C}[z]/(z^m)$
with respect to this basis is
\[
 \begin{pmatrix}
  \nu'_0(z) & z\nu'_{r-1}(z) & \cdots & z \nu'_1(z) \\
  \nu'_1(z) & \nu'_0(z)+\frac{dz}{rz} & \cdots & \nu'_2(z) \\
  \vdots & \vdots & \ddots & \ \vdots \\
  \nu'_{r-1}(z) & \nu'_{r-2}(z) & \cdots & \nu'_0(z)+\frac{(r-1)dz}{rz}
 \end{pmatrix}.
\]
In particular, we have
$\Tr(\nabla\otimes\mathbb{C}[z]/(z^m))=r\nu_0(z)+(r-1)dz/2z$.

Take a generator $\bar{e}_0$ of $L_0$ as a $\mathbb{C}[w]/(w^{mr-r+1})$-module.
Let $\bar{e}_k$ be the element of $L_k$ corresponding to $w^k\otimes e_0$
via the isomorphism
\[
 L_k \xrightarrow[\sim]{\psi_1\circ\cdots\circ\psi_k}
 (w^k)/(w^{k+mr-r+1})\otimes L_0.
\]
Since $\pi_k|_{V_k}$ is surjective, we can take an element $e_k\in V_k$ satisfying
$\pi_k(e_k)=\bar{e}_k$.
Then we have
\begin{align*}
 w^l\pi_k(e_k)
 &=
 (\phi_{k+1}\circ\cdots\circ\phi_{k+l})(\pi_{k+l}(e_{k+l}))
 =\pi_k(e_{k+l})
 \quad\quad
 (k+l<r)
 \\
 w^{r-k+l}\pi_k(e_k)
 &=
 (\phi_{k-1}\circ\cdots\circ\phi_{r-1}\circ\phi_r
 \circ(\mathrm{id}\otimes\phi_1)\circ\cdots(\mathrm{id}\otimes\phi_l))
 (z\otimes\pi_l(e_l))
 \\
 &=
 \pi_k(ze_l)
 \quad\quad
 (l<k).
\end{align*}
So the representation matrix of
$\nabla\otimes\mathbb{C}[z]/(z^m)$ with respect to the basis
$e_0,\ldots,e_{r-1}$ is
\[
 \begin{pmatrix}
  \nu_0(z) & z\nu_{r-1}(z) & \cdots & z\nu_1(z) \\
  \tilde{\nu}_1(z) & \nu_0(z)+\frac{dz}{rz} & \cdots & \nu_2(z) \\
  \vdots & \vdots & \ddots & \ \vdots \\
  \tilde{\nu}_{r-1}(z) & \tilde{\nu}_{r-2}(z) & \cdots & \nu_0(z)+\frac{(r-1)dz}{rz}
 \end{pmatrix}.
\]
whose  lower triangular entries
$\tilde{\nu}_1(z),\ldots,\tilde{\nu}_{r-1}(z)$
satisfy $\tilde{\nu}_k(z)\equiv\nu_k(z) \pmod {z^{m-1}dz/z^m}$
and have ambiguities in residue parts
for $1\leq k\leq r-1$.
In any case, we have
$\Tr(\nabla\otimes\mathbb{C}[z]/(z^m))=r\nu_0(z)+(r-1)dz/2z$.
So we get $\nu_0(z)\equiv \nu'_0(z) \pmod{z^mdz/z^m}$.
Thus we have
$\nu(w)=\nu'(w)$ because
\[
 \nu(w)\equiv \nu'(w)
 \pmod {w^{mr-r+1}dw/w^{mr-r+1}}
\]
and both of 
$\nu(w)$ and $\nu'(w)$
belong to $\sum_{l=0}^{mr-r}\mathbb{C}w^ldw/w^{mr-r+1}$.
Thus we get a formal isomorphism
$\pi'\colon (W,\nabla) \stackrel{\sim}\longrightarrow
(\mathbb{C}[[w]],\nabla_{\nu(w)})$.
\end{proof}

\section{Definition and construction of the moduli space
of parabolic connections of generic ramified type}
\label{section:definition-and-construction}

Let $C$ be a smooth projective irreducible curve over
$\Spec\mathbb{C}$ of genus $g$
and $D=\sum_{i=1}^nm_it_i$ be an effective divisor on $C$
with $m_i\geq 1$ for any $i$.
Take integers $r,a,(r^{(i)}_j)^{1\leq i\leq n}_{0\leq j\leq s_i-1}$
satisfying $r>0$, $r^{(i)}_j>0$
and $r=\sum_{j=0}^{s_i-1}r^{(i)}_j$ for any $i$.
If $m_i=1$, we further assume that $r^{(i)}_j=1$ for all $j$.
We take a generator $z_i$ of the maximal ideal of
${\mathcal O}_{C,t_i}$ and take a variable
$w^{(i)}_j$ satisfying $(w^{(i)}_j)^{r^{(i)}_j}=z_i$.
Then the completion $\hat{\mathcal O}_{C,t_i}$ 
of ${\mathcal O}_{C,t_i}$
is isomorphic to the formal power series ring
$\mathbb{C}[[z_i]]$ in $z_i$.
The formal power series ring
$\mathbb{C}[[w^{(i)}_j]]$ in $w^{(i)}_j$
becomes a free $\mathbb{C}[[z_i]]$-module of rank $r^{(i)}_j$.
Note that we have
\[
  dz_i=r^{(i)}_j(w^{(i)}_j)^{r^{(i)}_j-1}dw^{(i)}_j.
\]
We denote by ${\mathcal O}_{m_it_i}$ the structure sheaf of
the effective divisor $m_it_i$ which is considered as a closed subscheme of $C$.
So we have ${\mathcal O}_{m_it_i}\cong\mathbb{C}[z_i]/(z_i^{m_i})$.
We set
\[
 N((r^{(i)}_j),a):=
 \left\{
 \bnu=\Big(\nu^{(i)}_j\big(w^{(i)}_j\big)\Big)^{1\leq i\leq n}_{0\leq j\leq s_i-1} \left|
 \begin{array}{l}
 \displaystyle
 \nu^{(i)}_j(w^{(i)}_j)=\sum_{k=0}^{r^{(i)}_j-1}\nu^{(i)}_{j,k}(z_i)(w^{(i)}_j)^k,
 \\
 \text{$\nu^{(i)}_{j,k}(z_i)\in\sum_{l=0}^{m_i-2}\mathbb{C}z_i^ldz_i/z_i^{m_i}$
 for $1\leq k\leq r^{(i)}_j-1$,}
 \\
 \text{$\nu^{(i)}_{j,0}(z_i)\in\sum_{l=0}^{m_i-1}\mathbb{C}z_i^ldz_i/z_i^{m_i}$ and}
 \\
 \displaystyle
 a+\sum_{i=1}^n\sum_{j=0}^{s_i-1}
 \Big(r^{(i)}_j\res_{z^{(i)}_j}(\nu^{(i)}_{j,0}(z_i))+\frac{r^{(i)}_j-1}{2}\Big)=0
 \end{array}
 \right.
 \right\}
\]
and call an element of $N((r^{(i)}_j),a)$ a ramified exponent.
Note that a ramified exponent
$\bnu\in N((r^{(i)}_j),a)$ is determined by the coefficients of
$\nu^{(i)}_{j,k}(z_i)$.

\begin{definition} \label {def:connection-generic-ramified} \rm
Take  a ramified exponent
$\bnu=\big(\nu^{(i)}_j(w^{(i)}_j)\big)^{1\leq i\leq n}_{0\leq j\leq s_i-1}\in N((r^{(i)}_j),a)$.
We say that a tuple
$(E,\nabla,\{l^{(i)}_j,V^{(i)}_{j,k},L^{(i)}_{j,k},\pi^{(i)}_{j,k},\phi^{(i)}_{j,k}\})$
is a parabolic connection of generic ramified type with the exponent $\bnu$ if
\begin{itemize}
\item[(i)]
$E$ is an algebraic vector bundle on $C$ of rank $r$ and degree $a$,
\item[(ii)] 
$\nabla\colon E\longrightarrow E\otimes\Omega^1_C(D)$ 
is an algebraic connection admitting poles along $D$,
\item[(iii)] 
$E|_{m_it_i}=l^{(i)}_0\supset l^{(i)}_1\supset\cdots\supset l^{(i)}_{s_i-1}\supset l^{(i)}_{s_i}=0$
is a filtration by ${\mathcal O}_{m_it_i}$-submodules
such that 
$l^{(i)}_j/l^{(i)}_{j+1}\cong{\mathcal O}_{m_it_i}^{\oplus r^{(i)}_j}$
and $\nabla|_{m_it_i}(l^{(i)}_j)\subset l^{(i)}_j\otimes\Omega^1_C(D)$
for $1\leq i \leq n$ and $0\leq j \leq s_i-1$,
\item[(iv)] 
$\Big( \big( V^{(i)}_{j,k},L^{(i)}_{j,k},\pi^{(i)}_{j,k}\big)_{0\leq k\leq r^{(i)}_j-1},
\big(\phi^{(i)}_{j,k}\big)_{1\leq k\leq r^{(i)}_j}\Big)$
is a $\nu^{(i)}_j(w^{(i)}_j)$-ramified structure on
$(l^{(i)}_j/l^{(i)}_{j+1},\nabla^{(i)}_j)$
in the sense of Definition \ref {definition: ramified structure},
where 
$\nabla^{(i)}_j\colon l^{(i)}_j/l^{(i)}_{j+1}
\longrightarrow l^{(i)}_j/l^{(i)}_{j+1}\otimes\Omega^1_C(D)$
is the homomorphism induced by $\nabla|_{m_it_i}$. 
\end{itemize}
\end{definition}

\begin{remark} \rm
If the leading terms
$\big\{ \nu^{(i)}_{j,0}(0) \big\}_{0\leq j\leq s_i-1}$ are distinct for any $i$
and if $\nu^{(i)}_{j,1}(0)\neq 0$ for any $i$,
then we can see by Proposition \ref  {prop:generic-2}
that $(E,\nabla)\otimes\widehat{\mathcal O}_{C,t_i} 
\cong\bigoplus_{j=0}^{s_i-1} \big( \mathbb{C}[[w^{(i)}_j]],\nabla_{\nu(w^{(i)}_j)}\big)$
at each point $t_i$.
Without the condition, we can no longer expect the formal structure
to be given by the exponent $\bnu$.
The above formulation is motivated by the author's hope
to construct the moduli space as a canonical degeneration of
the full dimensional moduli space of regular singular connections,
but we should be careful not to confuse the meaning of the moduli.
\end{remark}

\begin{definition}\rm
Take a tuple of rational numbers $\balpha=(\alpha^{(i)}_j)^{1\leq i\leq n}_{1\leq j\leq s_i}$
satisfying
\[
 0<\alpha^{(i)}_1<\alpha^{(i)}_2<\cdots<\alpha^{(i)}_{s_i}<1
\]
for any $i$
and $\alpha^{(i)}_j\neq\alpha^{(i')}_{j'}$ for $(i,j)\neq(i',j')$.
We call $\balpha$ a parabolic weight.
We say that a parabolic connection
$(E,\nabla,\{l^{(i)}_j,V^{(i)}_{j,k},L^{(i)}_{j,k},\pi^{(i)}_{j,k},\phi^{(i)}_{j,k}\})$ of generic ramified type is
$\balpha$-stable (resp.\ $\balpha$-semistable) if the inequality
\begin{align*}
 &\frac{\deg F+\sum_{i=1}^n\sum_{j=1}^{s_i}
 \alpha^{(i)}_j\length((F|_{m_it_i}\cap l^{(i)}_{j-1})/(F|_{m_it_i}\cap l^{(i)}_j))}{\rank F} \\
 &\genfrac{}{}{0pt}{}{<}{(\text{\rm resp. }\leq)}
 \frac{\deg E+\sum_{i=1}^n\sum_{j=1}^{s_i} \alpha^{(i)}_j \length(l^{(i)}_{j-1}/l^{(i)}_j)}{\rank E}
\end{align*}
holds for any non-trivial subbundle $0\neq F\subsetneq E$ satisfying
$\nabla(F)\subset F\otimes\Omega^1_C(D)$.
\end{definition}

\begin{remark}\rm
If  $(E,\nabla,\{l^{(i)}_j,V^{(i)}_{j,k},L^{(i)}_{j,k},\pi^{(i)}_{j,k},\phi^{(i)}_{j,k}\})$
is $\balpha$-stable, we can see that there are only scalar endomorphisms
$u\colon E\longrightarrow E$ satisfying $\nabla\circ u=u\circ\nabla$
and $\nabla|_{m_it_i}(l^{(i)}_j)\subset l^{(i)}_j$ for any $i,j$.
\end{remark}

\begin{remark}\rm
If $s_i=1$ for some $i$ and if the
$\dfrac{w^{(i)}_0dw^{(i)}_0}{(w^{(i)}_0)^{m_ir^{(i)}_0-r^{(i)}_0+1}}$-coefficient of
$\nu^{(i)}_0(w^{(i)}_0)$ is not zero,
then $(E,\nabla)$ is formally irreducible at $t_i$ by
Proposition \ref {proposition: characterization of ramified connection}
and Proposition \ref{prop:generic-2}.
So a parabolic connection
$(E,\nabla,\{l^{(i)}_j,V^{(i)}_{j,k},L^{(i)}_{j,k},\pi^{(i)}_{j,k},\phi^{(i)}_{j,k}\})$
of generic ramified type with the exponent $\bnu$
is $\balpha$-stable with respect to any weight $\balpha$
in this case.
\end{remark}

In the following, we give the definition of the moduli functor of parabolic connections
of generic ramified type
in order to make the meaning of the moduli clear.

\begin{definition} \label {def:moduli-functor} \rm
We define a contravariant functor
${\mathcal M}^{\balpha}_{C,D}((r^{(i)}_j),a):(\sch/N((r^{(i)}_j),a))^o\longrightarrow(\sets)$
from the category $(\sch/N((r^{(i)}_j),a))$ of noetherian schemes over $N((r^{(i)}_j),a)$
to the category $(\sets)$ of sets by setting
\[
 {\mathcal M}^{\balpha}_{C,D}((r^{(i)}_j),a)(S)=
 \left\{
 (E,\nabla,\{l^{(i)}_j,V^{(i)}_{j,k},L^{(i)}_{j,k},\pi^{(i)}_{j,k},\phi^{(i)}_{j,k}\})
 \right\}/\sim,
\]
for a noetherian scheme $S$ over $N((r^{(i)}_j),a)$, where
\begin{itemize}
\item[(i)] $E$ is a vector bundle on $C\times S$ of rank $r$ 
and $\deg(E|_{C\times s})=a$ for any point $s\in S$,
\item[(ii)] $\nabla \colon E\longrightarrow E\otimes\Omega^1_{C\times S/S}(D\times S)$
is a relative connection,
\item[(iii)] $E|_{m_it_i\times S}=l^{(i)}_0\supset l^{(i)}_1\supset\cdots\supset l^{(i)}_{s_i-1}\supset l^{(i)}_{s_i}=0$
is a filtration by ${\mathcal O}_{m_it_i\times S}$-submodules
such that $\nabla|_{m_it_i\times S}(l^{(i)}_j)\subset l^{(i)}_j\otimes\Omega^1_C(D)$
for any $i,j$ and
each $l^{(i)}_j/l^{(i)}_{j+1}$ is a locally free ${\mathcal O}_{m_it_i\times S}$-module
of rank $r^{(i)}_j$, 
\item[(iv)] 
for the homomorphism
$\nabla^{(i)}_j \colon l^{(i)}_j/l^{(i)}_{j+1} \longrightarrow
l^{(i)}_j/l^{(i)}_{j+1}\otimes\Omega^1_C(D)$
induced by $\nabla|_{m_it_i\times S}$,
a tuple
$\Big( \big( V^{(i)}_{j,k},L^{(i)}_{j,k},\pi^{(i)}_{j,k}\big)_{0\leq k\leq r^{(i)}_j-1},
\big(\phi^{(i)}_{j,k}\big)_{1\leq k\leq r^{(i)}_j}\Big)$
is a $S$-flat family of $\nu^{(i)}_j(w^{(i)}_j)$-ramified structure on
$(l^{(i)}_j/l^{(i)}_{j+1},\nabla^{(i)}_j)$
in the following sense,
where $\bnu=(\nu^{(i)}_j(w^{(i)}_j))$ is a family of ramified exponents
determined by the structure morphism
$S\longrightarrow N((r^{(i)}_j),a)$:
\begin{enumerate}
\item[(a)]
$l^{(i)}_j/l^{(i)}_{j+1}=V^{(i)}_{j,0}\supset\cdots\supset 
V^{(i)}_{j,r^{(i)}_j-1}\supset z_i V^{(i)}_{j,0}$ is a filtration by ${\mathcal O}_{m_it_i\times S}$-submodules
such that $V^{(i)}_{j,r^{(i)}_j-1}/z_i V^{(i)}_{j,0}$ and
$V^{(i)}_{j,k}/V^{(i)}_{j,k+1}$ for $0\leq k \leq r^{(i)}_j-2$ are
locally free ${\mathcal O}_{t_i\times S}$-modules of rank one and that
$\nabla^{(i)}_j(V^{(i)}_{j,k})\subset V^{(i)}_{j,k}\otimes\Omega^1_C(D)$ for any $k$,
\item[(b)] 
for $0\leq k\leq r^{(i)}_j-1$,
$\pi^{(i)}_{j,k}\colon V^{(i)}_{j,k}\otimes
{\mathcal O}_S[w^{(i)}_j]/((w^{(i)}_j)^{m_ir^{(i)}_j-r^{(i)}_j+1})\longrightarrow L^{(i)}_{j,k}$
is a locally free quotient ${\mathcal O}_S[w^{(i)}_j]/((w^{(i)}_j)^{m_ir^{(i)}_j-r^{(i)}_j+1})$-module
of rank one
whose restriction
$\pi^{(i)}_{j,k}|_{V^{(i)}_{j,k}}\colon V^{(i)}_{j,k}\longrightarrow L^{(i)}_{j,k}$
is surjective for any $k$
and the diagram
\[
 \begin{CD}
  V^{(i)}_{j,k} @>\pi^{(i)}_{j,k}\big|_{V^{(i)}_{j,k}} >> L^{(i)}_{j,k} \\
  @V\nabla^{(i)}_j VV    @VV \nu^{(i)}_j(w^{(i)}_j)+\frac{k\, dz_i}{r^{(i)}_j z_i} V \\
  V^{(i)}_{j,k}\otimes\Omega^1_{C\times S/S}(D\times S)
  @>(\pi^{(i)}_{j,k}\otimes\mathrm{id})\big|_{V^{(i)}_{j,k}\otimes\Omega^1_{C\times S/S}(D)}>>
  L^{(i)}_{j,k}\otimes\Omega^1_{C\times S/S}(D\times S) 
 \end{CD}
\]
is commutative,
\item[(c)] 
$\phi^{(i)}_{j,r^{(i)}_j}\colon (z_i)/(z_i^{m_i+1})\otimes L^{(i)}_{j,0}\longrightarrow L^{(i)}_{j,r^{(i)}_j-1}$
and
$\phi^{(i)}_{j,k}\colon L^{(i)}_{j,k}\longrightarrow L^{(i)}_{j,k-1}$ for $k=1,\ldots,r^{(i)}_j-1$ are
${\mathcal O}_S[w^{(i)}_j]$-homomorphisms
such that
$\im(\phi^{(i)}_{j,k})=w^{(i)}_j L^{(i)}_{j,k-1}$ for $k=1,\ldots,r^{(i)}_j$ 
and that the diagrams
\[
 \begin{CD}
  V^{(i)}_{j,k}  @>\pi^{(i)}_{j,k}\big|_{V^{(i)}_{j,k}} >>  L^{(i)}_{j,k} \\
  @VVV   @V\phi^{(i)}_{j,k}VV \\
  V^{(i)}_{j,k-1} @>\pi^{(i)}_{j,k-1}\big|_{V^{(i)}_{j,k-1}} >>  L^{(i)}_{j,k-1}
 \end{CD}
\]
for $k=1,\ldots,r^{(i)}_j-1$ and the diagram
\[
 \begin{CD}
  (z_i)/(z_i^{m_i+1})\otimes V^{(i)}_{j,0}
 @>\mathrm{id}\otimes\pi^{(i)}_{j,0}\big|_{V^{(i)}_{j,0}} >>
  (z_i)/(z_i^{m_i+1})\otimes L^{(i)}_{j,0} \\
  @VVV  @VV\phi^{(i)}_{j,r^{(i)}_j}V \\
  V^{(i)}_{j,r^{(i)}_j-1}
  @>\pi^{(i)}_{j,r^{(i)}_j-1}\big|_{V^{(i)}_{j,r^{(i)}_j-1}} >> L^{(i)}_{j,r^{(i)}_j-1}
 \end{CD}
\]
are commutative,
\item[(d)]
there are ${\mathcal O}_S[w^{(i)}_j]$-isomorphisms
$\psi^{(i)}_{j,k}\colon
L^{(i)}_{j,k} \xrightarrow{\sim}
(w^{(i)}_j)/((w^{(i)}_j)^{m_ir^{(i)}_j-r^{(i)}_j+2})\otimes L^{(i)}_{j,k-1}$
for $1\leq k\leq r^{(i)}_j-1$
such that the composition
\[
 L^{(i)}_{j,k} \xrightarrow[\sim]{\psi^{(i)}_{j,k}}
 (w^{(i)}_j)/((w^{(i)}_j)^{m_ir^{(i)}_j-r^{(i)}_j+2})\otimes L^{(i)}_{j,k-1}
 \longrightarrow w^{(i)}_jL^{(i)}_{j,k-1} \hookrightarrow L^{(i)}_{j,k-1}
\]
coincides with $\phi^{(i)}_{j,k}\colon L^{(i)}_{j,k}\longrightarrow L^{(i)}_{j,k-1}$
and that the composition
\[
 \psi^{(i)}_{j,1}\circ\cdots\circ\psi^{(i)}_{j,r^{(i)}_j-1}\circ\phi^{(i)}_{j,r^{(i)}_j}
 \colon
 (z_i)/(z_i^{m_i})\otimes L^{(i)}_{j,0}\longrightarrow
 ((w^{(i)}_j)^{r^{(i)}_j-1})/((w^{(i)}_j)^{m_ir^{(i)}_j})\otimes L^{(i)}_{j,0}
\]
coincides with the homomorphism induced by
$(z_i)/(z_i^{m_i})\longrightarrow ((w^{(i)}_j)^{r^{(i)}_j-1})/((w^{(i)}_j)^{m_ir^{(i)}_j})$,
\end{enumerate}
\item[(v)] for any geometric point $\Spec K \longrightarrow S$, the fiber
$(E,\nabla,\{l^{(i)}_j,V^{(i)}_{j,k},L^{(i)}_{j,k},\pi^{(i)}_{j,k},\phi^{(i)}_{j,k}\})\otimes K$ is
$\balpha$-stable.
\end{itemize}
Here $(E,\nabla,\{l^{(i)}_j,V^{(i)}_{j,k},L^{(i)}_{j,k},\pi^{(i)}_{j,k},\phi^{(i)}_{j,k}\})\sim
(E',\nabla',\{l'^{(i)}_j,V'^{(i)}_{j,k},L'^{(i)}_{j,k},\pi'^{(i)}_{j,k},\phi'^{(i)}_{j,k}\})$
if there are a line bundle ${\mathcal L}$ on $S$ and
\begin{itemize}
\item[($\alpha$)]
isomorphisms 
$\theta\colon E\xrightarrow{\sim} E'\otimes {\mathcal L}$
of vector bundles on $C\times S$,
\item[($\beta$)]
isomorphisms
$\vartheta^{(i)}_{j,k}\colon L^{(i)}_{j,k}\xrightarrow{\sim} L'^{(i)}_{j,k}\otimes {\mathcal L}$
of ${\mathcal O}_S[w^{(i)}_j]/((w^{(i)}_j)^{m_ir^{(i)}_j-r^{(i)}_j+1})$-modules for any $i,j,k$
\end{itemize}
such that the equalities
\begin{align*}
 (\theta\otimes\mathrm{id})\circ\nabla
 &=
 \nabla'\circ\theta
 \\
 \theta|_{m_it_i\times S}(l^{(i)}_j)
 &=
 l'^{(i)}_j\otimes {\mathcal L}
 \quad\quad (\text{for any $i,j$})
\end{align*}
hold and that for the induced isomorphism
$\theta^{(i)}_j\colon l^{(i)}_j/l^{(i)}_{j+1}\xrightarrow{\sim}
(l'^{(i)}_j/l'^{(i)}_{j+1})\otimes {\mathcal L}$,
the equalities
\begin{align*}
 \theta^{(i)}_j(V^{(i)}_{j,k})
 &=
 V'^{(i)}_{j,k}\otimes {\mathcal L}
 \quad\quad (0\leq k\leq r^{(i)}_j-1)
 \\
 (\pi'^{(i)}_{j,k}\otimes 1)|_{V'^{(i)}_{j,k}\otimes {\mathcal L}}\circ\theta^{(i)}_j|_{V^{(i)}_{j,k}}
 &=
 \vartheta^{(i)}_{j,k}\circ \pi^{(i)}_{j,k}|_{V^{(i)}_{j,k}}
 \quad\quad
 (0\leq k\leq r^{(i)}_j-1)
 \\
 \vartheta^{(i)}_{j,k-1}\circ\phi^{(i)}_{j,k}
 &=
 (\phi'^{(i)}_{j,k}\otimes 1)\circ\vartheta^{(i)}_{j,k}
 \quad\quad (1\leq k\leq r^{(i)}_j-1)
 \\
 \vartheta^{(i)}_{j,r^{(i)}_j-1}\circ(\mathrm{id}_{(z)}\otimes\phi^{(i)}_{j,0})
 &=
 \phi'^{(i)}_{j,r^{(i)}_j}\circ(\mathrm{id}_{(z)}\otimes\vartheta^{(i)}_{j,0})
\end{align*}
hold.
\end{definition}

\begin{theorem} \label {thm-existence-moduli-generic}
 There exists a relative coarse moduli scheme $M^{\balpha}_{C,D}((r^{(i)}_j),a)$
 of $\balpha$-stable parabolic connections
 of generic ramified type over $N((r^{(i)}_j),a)$.
 Moreover,
 $M^{\balpha}_{C,D}((r^{(i)}_j),a) \longrightarrow N((r^{(i)}_j),a)$
 is a quasi-projective morphism.
\end{theorem}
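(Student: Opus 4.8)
The plan is to realize $M^{\balpha}_{C,D}((r^{(i)}_j),a)$ as a quotient of a locally closed subscheme of the moduli space of $\balpha$-stable parabolic $\Lambda^1_D$-triples constructed in \cite{IIS-1}. A connection $\nabla\colon E\to E\otimes\Omega^1_C(D)$ on a bundle $E$ of rank $r$ and degree $a$, together with the filtration $\{l^{(i)}_j\}$, determines a parabolic $\Lambda^1_D$-triple by setting $(E_1,E_2)=(E,E\otimes\Omega^1_C(D))$ and letting $\Phi\colon\Lambda^1_D\otimes E_1\to E_2$ be the $\mathcal O_C$-morphism induced by $\nabla$, with parabolic structure the flag $\{l^{(i)}_j\}$ and weights $\balpha$. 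First I would record the standard boundedness statement: since the underlying bundles of $\balpha$-stable objects form a bounded family (as in \cite{Simpson-1} and \cite{IIS-1}), the associated triples all occur in the GIT construction via a $\Quot$-scheme, so there is a coarse moduli scheme of $\balpha$-stable parabolic $\Lambda^1_D$-triples, projective over $N((r^{(i)}_j),a)$, into which our objects map.

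Next I would verify that the forgetful assignment $(E,\nabla,\{l^{(i)}_j,V^{(i)}_{j,k},L^{(i)}_{j,k},\pi^{(i)}_{j,k},\phi^{(i)}_{j,k}\})\mapsto(E,\Phi,\{l^{(i)}_j\})$ defines a morphism of functors whose image lies in a locally closed subscheme. The conditions singling out genuine connections among triples --- that $\Phi$ satisfies the Leibniz symbol condition, and that the generalized eigenvalues prescribed by $\bnu$ are attained, i.e. the commutativity of the diagrams in Definition \ref{def:connection-generic-ramified}(5) holds relative to the fixed exponent over $N((r^{(i)}_j),a)$ --- are closed, while $\balpha$-stability is open; hence the connection locus is locally closed. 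Here I would also check that for the given weights the $\balpha$-stability of the parabolic connection matches the stability used for the triples, using that the only $\Phi$-invariant sub-triples correspond to the $\nabla$-invariant subbundles $F$ appearing in the stability inequality.

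Then, over this locally closed locus, I would parametrize the remaining ramified data as a tower of relative schemes. The filtrations $\{V^{(i)}_{j,k}\}$ of the $\nabla^{(i)}_j$-invariant pieces $l^{(i)}_j/l^{(i)}_{j+1}$ live in a relative flag scheme, proper over the base, with the length and $\nabla^{(i)}_j$-invariance conditions closed; the rank-one free quotients $\pi^{(i)}_{j,k}$ are points of relative $\Quot$-schemes (equivalently Grassmann bundles), on which surjectivity of $\pi^{(i)}_{j,k}|_{V^{(i)}_{j,k}}$ is open and compatibility with $\nu^{(i)}_{j,k}$ is closed; and the homomorphisms $\phi^{(i)}_{j,k}$ are sections of a relative $\Hom$-bundle cut out by the image conditions $\im(\phi^{(i)}_{j,k+1})=w^{(i)}_jL^{(i)}_{j,k}$ together with the commuting squares of Definition \ref{def:connection-generic-ramified}(6), all locally closed. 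Assembling these gives a scheme $R$, quasi-projective over the triples moduli, carrying a universal family; finally I would absorb the equivalence generated by twisting by a line bundle $P$ on the base and by the isomorphisms $\vartheta^{(i)}_{j,k}$ into the coarse moduli identification already present on the $(E,\nabla)$-part, obtaining $M^{\balpha}_{C,D}((r^{(i)}_j),a)$ as a relative coarse moduli scheme over $N((r^{(i)}_j),a)$.

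The main obstacle I expect is twofold. First, proving that the forgetful morphism to the moduli of triples is a locally closed immersion requires matching the two stability notions and checking injectivity on points with the correct infinitesimal behaviour. Second, I must ensure that the ramified data genuinely forms a flat family of the prescribed type, so that $R\to(\text{triples moduli})$ is representable by a quasi-projective morphism and descends compatibly with the line-bundle-twist equivalence; the delicate point here is the flatness of the $L^{(i)}_{j,k}$ and the exactness of the conditions coming from the commutative diagrams, where the genericity of $\bnu$ (in particular the nonvanishing of the $\dfrac{w\,dw}{w^{mr-r+1}}$-coefficient exploited in Proposition \ref{prop:generic-2}) guarantees that the fibers have the expected structure.
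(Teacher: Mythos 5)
Your overall plan coincides with the paper's own proof: the paper also realizes the $(E,\nabla,\{l^{(i)}_j\})$-part as a locally closed subscheme $X$ of the moduli space of stable parabolic $\Lambda^1_D$-triples of \cite{IIS-1}, Theorem 5.1 (citing the argument of \cite{Inaba-Saito}, Theorem 2.1, which is where the boundedness and the matching of the two stability notions that you worry about are handled), and then parametrizes the ramified data $(V^{(i)}_{j,k},L^{(i)}_{j,k},\pi^{(i)}_{j,k})$ by a locally closed subscheme $Y$ of a product of relative flag schemes, with the $\phi^{(i)}_{j,k}$ and the commutativity against the universal exponent cutting out a maximal closed subscheme $M'$ of $Y\times N((r^{(i)}_j),a)$.

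There is, however, one step in your proposal that fails as written: you build the tower of relative flag schemes, Quot-schemes and $\Hom$-bundles ``over this locally closed locus,'' i.e.\ over the coarse moduli of triples. To form the relative flag scheme of $l^{(i)}_j/l^{(i)}_{j+1}$ (and the subsequent Quot- and $\Hom$-constructions) you need a universal family $(\tilde{E},\tilde{\nabla},\{\tilde{l}^{(i)}_j\})$ over the base, and a coarse moduli scheme carries no such family in general --- this is precisely the effect of the line-bundle-twist equivalence that you propose to ``absorb'' only at the very end, without giving a mechanism. The paper resolves this by passing to an \'etale covering $X'\longrightarrow X$ over which a universal family does exist, carrying out your tower construction over $X'$ to obtain $M'$, and then showing that $M'\longrightarrow X'\times N((r^{(i)}_j),a)$ descends, i.e.\ $M'\cong M\times_{X\times N((r^{(i)}_j),a)}(X'\times N((r^{(i)}_j),a))$, because the construction is canonical and hence independent of the choice of universal family up to equivalence; an alternative fix is to perform the whole construction on the Quot-scheme parameter space and quotient by the free $PGL_N(\mathbb{C})$-action afterwards, as the paper does in Proposition \ref{prop:algebraic-space-moduli} for the simple-connections moduli. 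A smaller point: your appeal to the genericity of $\bnu$ (nonvanishing of the $\dfrac{w\,dw}{w^{mr-r+1}}$-coefficient, as in Proposition \ref{prop:generic-2}) is not needed anywhere in this construction --- the diagram conditions define closed subschemes for every $\bnu\in N((r^{(i)}_j),a)$, and indeed the paper's moduli scheme is constructed over all of $N((r^{(i)}_j),a)$; genericity enters only later, for stability remarks and for the symplectic structure.
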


\begin{proof}
We consider the moduli functor
${\mathcal X}\colon (\sch/\mathbb{C})^o\longrightarrow(\sets)$
defined by
\[
 {\mathcal X}(S)=
 \left\{
 (E,\nabla,\{l^{(i)}_j\})
 \right\}/\sim
\]
for a noetherian scheme $S$ over $\Spec\mathbb{C}$, where
\begin{itemize}
\item[(a)] $E$ is a vector bundle on $C\times S$ of rank $r$ and
$\deg(E|_{C\times\{s\}})=a$ for any $s\in S$,
\item[(b)] $\nabla\colon E\longrightarrow E\otimes\Omega^1_{C\times S/S}(D\times S)$
is a relative connection,
\item[(c)] $E|_{m_it_i\times S}=l^{(i)}_0\supset l^{(i)}_1\supset\cdots\supset l^{(i)}_{s_i-1}\supset l^{(i)}_{s_i}=0$
is a filtration by ${\mathcal O}_{m_it_i\times S}$-submodules
such that
$\nabla|_{m_it_i\times S}(l^{(i)}_j)\subset l^{(i)}_j\otimes\Omega^1_{C\times S/S}(D\times S)$
for any $i,j$
and that each $l^{(i)}_j/l^{(i)}_{j+1}$
is a locally free ${\mathcal O}_{m_it_i\times S}$-module of rank $r^{(i)}_j$,
\item[(d)] for any geometric point $s$ of $S$ and for any non-trivial subbundle
$0\neq F\subsetneq E|_{C\times s}$
satisfying $\nabla(F)\subset F\otimes \Omega^1_{C\times s}(D\times s)$, the inequality
\begin{align*}
 &\frac{\deg F+\sum_{i=1}^n\sum_{j=1}^{s_i} \alpha^{(i)}_j
 \length((F|_{m_it_i\times s}\cap l^{(i)}_{j-1}|_{m_it_i\times s})/(F|_{m_it_i\times s}\cap l^{(i)}_j|_{m_it_i\times s}))}
 {\rank F} \\
 &<
 \frac{\deg (E|_{C\times s})+\sum_{i=1}^n\sum_{j=1}^{s_i} \alpha^{(i)}_j
 \length(l^{(i)}_{j-1}|_{m_it_i\times s}/l^{(i)}_j|_{m_it_i\times s})}{\rank E}
\end{align*}
holds.
\end{itemize}
Here $(E,\nabla,\{l^{(i)}_j\})\sim(E',\nabla',\{l'^{(i)}_j\})$
if there are a line bundle ${\mathcal L}$ on $S$ and
an isomorphism 
$\theta\colon E\xrightarrow{\sim}E'\otimes {\mathcal L}$
satisfying $\nabla'\theta=(\theta\otimes\mathrm{id})\nabla$ and
$\theta|_{m_it_i\times S}(l^{(i)}_j)=l'^{(i)}_j\otimes {\mathcal L}$ for any $i,j$.
By a similar argument to that of \cite[Theorem 2.1]{Inaba-Saito},
we can realize a coarse moduli scheme $X$ of ${\mathcal X}$
as a locally closed subscheme of the moduli space of stable parabolic $\Lambda^1_D$-triples
constructed in \cite[Theorem 5.1]{IIS-1}.
So $X$ is quasi-projective and there is a universal family
$(\tilde{E},\tilde{\nabla},\{\tilde{l}^{(i)}_j\})$
over some \'etale covering $X'\longrightarrow X$.
We consider the moduli functor
${\mathcal Y}\colon(\sch/X')^o\longrightarrow(\sets)$
defined by
\[
 {\mathcal Y}(S)=\left\{
 (V^{(i)}_{j,k},L^{(i)}_{j,k},\pi^{(i)}_{j,k})^{1\leq i\leq n}_{0\leq j\leq s_i-1,0\leq k\leq r^{(i)}_j-1}
 \right\}
\]
for a noetherian scheme $S$ over $X'$, where
\begin{itemize}
\item[(e)] $l^{(i)}_j/l^{(i)}_{j+1}\otimes{\mathcal O}_S=V^{(i)}_{j,0}\supset V^{(i)}_{j,1}
\supset\cdots\supset V^{(i)}_{j,r^{(i)}_j-1}\supset z_i V^{(i)}_{j,0}$
is a filtration by ${\mathcal O}_{m_it_i\times S}$-submodules
such that each $V^{(i)}_{j,0}/V^{(i)}_{j,k}$ is flat over $S$
and
$\dim_{k(s)}((V^{(i)}_{j,k}/V^{(i)}_{j,k+1})\otimes k(s))
=\dim_{k(s)}((V^{(i)}_{j,r^{(i)}_j-1}/z_iV^{(i)}_{j,0})\otimes k(s))=1$
for any $s\in S$,
\item[(f)] $\pi^{(i)}_{j,k} \colon 
V^{(i)}_{j,k}\otimes{\mathcal O}_S[w^{(i)}_j]/((w^{(i)}_j)^{m_ir^{(i)}_j-r^{(i)}_j+1})
\longrightarrow L^{(i)}_{j,k}$
is a quotient ${\mathcal O}_S[w^{(i)}_j]$-module such that
$L^{(i)}_{j,k}$ is a locally free ${\mathcal O}_S[w^{(i)}_j]/((w^{(i)}_j)^{m_ir^{(i)}_j-r^{(i)}_j+1})$-module
of rank one and the restriction
$\pi^{(i)}_{j,k}|_{V^{(i)}_{j,k}} \colon V^{(i)}_{j,k}\longrightarrow L^{(i)}_{j,k}$
is surjective for $k=0,\ldots,r^{(i)}_j-1$.
\end{itemize}
We can see that ${\mathcal Y}$ can be represented by a locally closed subscheme
$Y$ of a product of flag schemes over $X'$.
Let $(\tilde{V}^{(i)}_{j,k},\tilde{L}^{(i)}_{j,k},\tilde{\pi}^{(i)}_{j,k})$ be a universal family over $Y$.
We also take a pull-back $\tilde{\bnu}=(\tilde{\nu}^{(i)}_j)$ to $Y\times N((r^{(i)}_j),a)$
of the universal family over $N((r^{(i)}_j),a)$.
There is a maximal locally closed subscheme $Y'$ of $Y\times N((r^{(i)}_j),a)$ such that
the composition
\[
 \tilde{V}^{(i)}_{j,k}\otimes{\mathcal O}_{Y'}[w^{(i)}_j]/((w^{(i)}_j)^{m_ir^{(i)}_j-r^{(i)}_j+1})
 \longrightarrow
 \tilde{V}^{(i)}_{j,k-1}\otimes{\mathcal O}_{Y'}[w^{(i)}_j]/((w^{(i)}_j)^{m_ir^{(i)}_j-r^{(i)}_j+1})
 \xrightarrow{\tilde{\pi}^{(i)}_{j,k-1}}
 (\tilde{L}^{(i)}_{j,k-1})_{Y'}
\]
factors through an ${\mathcal O}_{Y'}[w^{(i)}_j]/((w^{(i)}_j)^{m_ir^{(i)}_j-r^{(i)}_j+1})$-homomorphism
\[
 \tilde{\phi}^{(i)}_{j,k}\colon (\tilde{L}^{(i)}_{j,k})_{Y'} \longrightarrow (\tilde{L}^{(i)}_{j,k-1})_{Y'}
\]
whose image is $w^{(i)}_j(\tilde{L}^{(i)}_{j,k-1})_{Y'}$
for $1\leq k\leq r^{(i)}_j-1$,
the composition
\[
  (z_i)/(z_i^{m_i+1})\otimes \tilde{V}^{(i)}_{j,0}
  \longrightarrow
  \tilde{V}^{(i)}_{j,r^{(i)}_j-1}\otimes
  {\mathcal O}_{Y'}[w^{(i)}_j]/((w^{(i)}_j)^{m_ir^{(i)}_j-r^{(i)}_j+1})
  \xrightarrow{\tilde{\pi}^{(i)}_{j,r^{(i)}_j-1}}
 ( \tilde{L}^{(i)}_{j,r^{(i)}_j-1} )_{Y'}
\].
factors through an ${\mathcal O}_{Y'}[w^{(i)}_j]/((w^{(i)}_j)^{m_ir^{(i)}_j-r^{(i)}_j+1})$-homomorphism
\[
 \tilde{\phi}^{(i)}_{r^{(i)}_j}
 \colon (z)/(z^{m_i+1})\otimes(\tilde{L}^{(i)}_{j,0})_{Y'}
 \longrightarrow
 (\tilde{L}^{(i)}_{j,r^{(i)}_j-1})_{Y'}
\]
whose image is $w^{(i)}_j\tilde{L}^{(i)}_{j,r^{(i)}_j-1}$
and the diagrams
\[
 \begin{CD}
  \tilde{V}^{(i)}_{j,k}\otimes{\mathcal O}_{Y'}[w^{(i)}_j]/((w^{(i)}_j)^{m_ir^{(i)}_j-r^{(i)}_j+1})
  @>\tilde{\pi}^{(i)}_{j,k}\otimes\mathrm{id}>> \tilde{L}^{(i)}_{j,k}\otimes{\mathcal O}_{Y'} \\
  @V\nabla^{(i)}_j\otimes\mathrm{id}VV   
  @VV\tilde{\nu}^{(i)}_j(w^{(i)}_j)+\frac{k\, dz_i}{r^{(i)}_j z_i} V \\
  \tilde{V}^{(i)}_{j,k}\otimes\Omega^1_C(D)\otimes
  {\mathcal O}_{Y'}[w^{(i)}_j]/((w^{(i)}_j)^{m_ir^{(i)}_j-r^{(i)}_j+1})
  @>\tilde{\pi}^{(i)}_{j,k}\otimes\mathrm{id}>>
  \tilde{L}^{(i)}_{j,k}\otimes\Omega^1_C(D)\otimes{\mathcal O}_{Y'} \\
 \end{CD}
\]
are commutative for all $i,j,k$.

Consider the product
\[
 Z=\prod_{i,j,k}\Spec \left( \mathrm{Sym}
 \bigg( {\mathcal Hom}_{Y'}\Big(
 {\mathcal Hom}_{{\mathcal O}_{Y'}[w^{(i)}_j]}
 \big( \tilde{L}^{(i)}_{j,k}  , 
 \big(w^{(i)}_j \big)\big/ \big((w^{(i)}_j)^{m_ir^{(i)}_j-r^{(i)}_j+2}\big)
 \otimes \tilde{L}^{(i)}_{j,k-1} \big) , 
 {\mathcal O}_{Y'} \Big)\bigg) \right)
\]
of affine space bundles.
There are universal sections
\[
 \tilde{\psi}^{(i)}_{j,k}\colon
 (\tilde{L}^{(i)}_{j,k})_Z\longrightarrow
  \left( (w^{(i)}_j)/((w^{(i)}_j)^{m_ir^{(i)}_j-r^{(i)}_j+2})\otimes \tilde{L}^{(i)}_{j,k-1} \right)_Z
\]
for $1\leq k\leq r^{(i)}_j-1$.
Let $Z'$ be the maximal locally closed subscheme of $Z$
such that $(\tilde{\psi}^{(i)}_{j,k})_{Z'}$ are isomorphic and that
the composition
\[
 (\tilde{L}^{(i)}_{j,k})_{Z'} \xrightarrow {\tilde{\psi}^{(i)}_{j,k}}
  \left( (w^{(i)}_j)/((w^{(i)}_j)^{m_ir^{(i)}_j-r^{(i)}_j+2})\otimes \tilde{L}^{(i)}_{j,k-1} \right)_{Z'}
  \longrightarrow \left(w^{(i)}_j \tilde{L}^{(i)}_{j,k-1}\right)_{Z'}
\]
coincides with $(\tilde{\phi}^{(i)}_{j,k})_{Z'}$ for $1\leq k\leq r^{(i)}_j-1$ and any $i,j$.
Consider the group scheme $G$ over $Y'$ whose set of $S$-valued points is
\[
 G(S):=
 \left\{ (1+a^{(i)}_{j,k})\in \prod_{i,j,k}{\mathcal O}_S[w^{(i)}_j]/((w^{(i)}_j)^{m_ir^{(i)}_j-r^{(i)}_j+1})
 \: \middle| \; 
 \begin{array}{l}
 \text{$a^{(i)}_{j,k}\in ((w^{(i)}_j)^{m_ir^{(i)}_j-r^{(i)}_j-1})$ and}
 \\
 \text{$a^{(i)}_{j,k-1}- a^{(i)}_{j,k} \in ((w^{(i)}_j)^{m_ir^{(i)}_j-r^{(i)}_j})$}
 \\
 \text{for $1\leq k\leq r^{(i)}_j-1$}
 \end{array}
 \right\}
\]
for any $Y'$-scheme $S$.
Then we can see that $Z'\longrightarrow Y'$ is a principal $G$-bundle.
Let
$\Sigma$ be the maximal closed subscheme of
$Z'$ such that
the composition
\[
 (\tilde{\psi}^{(i)}_{j,1}\circ\cdots\circ \tilde{\psi}^{(i)}_{j,r^{(i)}_j-1}
 \circ \tilde{\phi}^{(i)}_{j,r^{(i)}_j})_{\Sigma}
 \colon
 (z_i)/(z_i^{m_i})\otimes (\tilde{L}^{(i)}_{j,0})_{\Sigma}\longrightarrow
 ((w^{(i)}_j)^{r^{(i)}_j-1})/((w^{(i)}_j)^{m_ir^{(i)}_j})\otimes (\tilde{L}^{(i)}_{j,0})_{\Sigma}
\]
coincides with the homomorphism induced by
$(z_i)/(z_i^{m_i})\longrightarrow ((w^{(i)}_j)^{r^{(i)}_j-1})/((w^{(i)}_j)^{m_ir^{(i)}_j})$.
Then $\Sigma$ is preserved by the action of $G$.
So $\Sigma$ descends to a closed subscheme
$M'$ of $Y'$.

By the construction, we can see that $M'\longrightarrow X'\times N((r^{(i)}_j),a)$ descends to
a quasi-projective morphism
$M\longrightarrow X\times N((r^{(i)}_j),a)$, that is,
$M'\cong M\times_{X\times N((r^{(i)}_j),a)} (X'\times N((r^{(i)}_j),a))$.
Then $M$ is nothing but the desired moduli space
$M^{\balpha}_{C,D}((r^{(i)}_j),a)$.
\end{proof}

\section{Smoothness and dimension of the moduli space of parabolic connections
of generic ramified type}\label{section:smoothness-and-dimension}

The aim of this section is to prove the following theorem:

\begin{theorem} \label {thm:generic-smooth}
The structure morphism
$M^{\balpha}_{C,D}((r^{(i)}_j),a)\longrightarrow N((r^{(i)}_j),a)$
is a smooth morphism and its fiber $M^{\balpha}_{C,D}((r^{(i)}_j),a)_{\bnu}$
over $\bnu\in N((r^{(i)}_j),a)$ is of equi-dimension
\[
 2r^2(g-1)+2+\sum_{i=1}^n(r^2-r)m_i
\]
if $M^{\balpha}_{C,D}((r^{(i)}_j),a)_{\bnu}\neq\emptyset$.
\end{theorem}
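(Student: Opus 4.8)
\emph{Approach.} The plan is to run the standard deformation-theoretic argument of \cite{IIS-1} and \cite{Inaba-Saito}, adapted to the ramified parabolic data. First I would attach to a point $(E,\nabla,\{l^{(i)}_j,V^{(i)}_{j,k},L^{(i)}_{j,k},\pi^{(i)}_{j,k},\phi^{(i)}_{j,k}\})$ of the fiber a two-term complex $\mathcal{F}^\bullet=[\mathcal{F}^0\xrightarrow{d_\nabla}\mathcal{F}^1]$ on $C$, where $\mathcal{F}^0\subset\mathcal{E}nd(E)$ is the subsheaf of endomorphisms preserving every piece of the parabolic data (the flags $l^{(i)}_j$, the finer filtrations $V^{(i)}_{j,k}$, and the quotients $\pi^{(i)}_{j,k},\phi^{(i)}_{j,k}$), $\mathcal{F}^1\subset\mathcal{E}nd(E)\otimes\Omega^1_C(D)$ is cut out by the residue conditions that keep the exponent $\bnu$ fixed, and $d_\nabla(s)=\nabla\circ s-s\circ\nabla$. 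A routine (if lengthy) deformation computation then identifies the relative tangent space of $M^{\balpha}_{C,D}((r^{(i)}_j),a)\to N((r^{(i)}_j),a)$ with $\mathbb{H}^1(\mathcal{F}^\bullet)$, the infinitesimal automorphisms with $\mathbb{H}^0(\mathcal{F}^\bullet)$, and the obstructions with $\mathbb{H}^2(\mathcal{F}^\bullet)$.

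\emph{Smoothness.} For this I would split off the trace. The assignment $s\mapsto\Tr(s)$ sends $\mathcal{F}^\bullet$ to the de Rham-type complex $[\mathcal{O}_C\xrightarrow{d}\Omega^1_C]$ (the polar part of $\Tr\nabla$ being fixed once $\bnu$ is fixed), giving a splitting of $\mathcal{F}^\bullet$ into the trace-free subcomplex $\mathcal{F}^\bullet_0$ and this rank-one quotient. On $\mathcal{F}^\bullet_0$ the pairing $\Tr(st)$ identifies $\mathcal{F}^1_0$ with $(\mathcal{F}^0_0)^\vee\otimes\Omega^1_C$ and intertwines $d_\nabla$ with its transpose, so $\mathcal{F}^\bullet_0$ is Serre-self-dual and $\mathbb{H}^2(\mathcal{F}^\bullet_0)\cong\mathbb{H}^0(\mathcal{F}^\bullet_0)^\vee$. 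By $\balpha$-stability the only endomorphisms commuting with $\nabla$ and preserving the data are scalars, so $\mathbb{H}^0(\mathcal{F}^\bullet_0)=0$ and hence $\mathbb{H}^2(\mathcal{F}^\bullet_0)=0$. The remaining obstruction lies in $\mathbb{H}^2([\mathcal{O}_C\xrightarrow{d}\Omega^1_C])\cong\mathbb{C}$, and this single class is exactly the Fuchs-type relation $a+\sum\res(\nu^{(i)}_{j,k})/r^{(i)}_j=0$ already imposed in the base $N((r^{(i)}_j),a)$; working relatively over $N$ therefore kills it, yielding smoothness of $M^{\balpha}_{C,D}((r^{(i)}_j),a)\to N((r^{(i)}_j),a)$.

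\emph{Dimension.} The dimension then falls out of Riemann-Roch for the complex. Since the sequence $0\to\mathcal{F}^\bullet_0\to\mathcal{F}^\bullet\to[\mathcal{O}_C\xrightarrow{d}\Omega^1_C]\to 0$ splits, the fiber dimension equals $-\chi(\mathcal{F}^\bullet_0)+\dim\mathbb{H}^1([\mathcal{O}_C\xrightarrow{d}\Omega^1_C])$, using $\mathbb{H}^0(\mathcal{F}^\bullet_0)=\mathbb{H}^2(\mathcal{F}^\bullet_0)=0$. The second term is $\dim H^1_{dR}(C)=2g$, the determinant-connection directions. For the first I would write $\chi(\mathcal{F}^\bullet_0)=\chi(\mathcal{F}^0_0)-\chi(\mathcal{F}^1_0)$ with each expressed as $\mathcal{E}nd_0(E)$ (resp.\ $\mathcal{E}nd_0(E)\otimes\Omega^1_C(D)$) minus the local length drops $\delta^0_i,\delta^1_i$ imposed at each $m_it_i$. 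The self-duality makes the conditions defining $\mathcal{F}^0_0$ and $\mathcal{F}^1_0$ mutually perpendicular, so the global part combines to $2(r^2-1)(g-1)$ and the local parts contribute $\sum_i r(r-1)m_i$; adding the $2g$ and using $2(r^2-1)(g-1)+2g=2r^2(g-1)+2$ produces exactly $2r^2(g-1)+2+\sum_{i}(r^2-r)m_i$.

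\emph{Main obstacle.} The hard part is the local analysis at the ramified points: one must describe $\mathcal{F}^0$ and $\mathcal{F}^1$ precisely enough to determine which endomorphisms of $E|_{m_it_i}$ are compatible with the entire tower $(V^{(i)}_{j,k},L^{(i)}_{j,k},\pi^{(i)}_{j,k},\phi^{(i)}_{j,k})$ and with the fixed $\nu^{(i)}_{j,k}$, and then evaluate the lengths $\delta^0_i,\delta^1_i$, checking $\delta^1_i-\delta^0_i=(r-1)m_i$. Here the genericity hypothesis ($c_1\neq 0$) enters exactly as in Proposition \ref{prop:generic-2}: it rigidifies the auxiliary quotient data so that the expected number of conditions is imposed and the trace pairing between $\mathcal{F}^0_0$ and $\mathcal{F}^1_0$ is perfect. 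Verifying this clean local count is the one place where the ramified case genuinely differs from \cite{IIS-1} and requires care.
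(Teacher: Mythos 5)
Your proposal follows the same overall architecture as the paper: a two-term complex governing the deformation theory, tangent space $=\mathbf{H}^1$, obstruction in $\mathbf{H}^2$, reduction to the trace-free part, duality plus $\balpha$-stability to kill the obstruction, and an Euler-characteristic computation for the dimension. (That you split off the trace directly and invoke the Fuchs relation, while the paper factors through $\det\colon M^{\balpha}_{C,D}((r^{(i)}_j),a)\to M_{C,D}(1,a)\times_{N((1),a)}N((r^{(i)}_j),a)$ and uses smoothness of the line-bundle moduli, is a cosmetic difference.) However, there is a genuine gap at the central step: your claim that the trace pairing identifies $\mathcal{F}^1_0$ with $(\mathcal{F}^0_0)^\vee\otimes\Omega^1_C$, so that $\mathcal{F}^\bullet_0$ is Serre self-dual and $\mathbf{H}^2(\mathcal{F}^\bullet_0)\cong\mathbf{H}^0(\mathcal{F}^\bullet_0)^\vee$, is false precisely in the ramified case. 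The conditions defining $\mathcal{F}^0_0$ (preserve $l^{(i)}_j$, preserve $V^{(i)}_{j,k}$, and $\bar{\pi}^{(i)}_{j,k}(u(\ker\bar{\pi}^{(i)}_{j,k}))=0$) and $\mathcal{F}^1_0$ (preserve the filtrations and $(\bar{\pi}^{(i)}_{j,k}\otimes\mathrm{id})(v(V^{(i)}_{j,k}))=0$) are not mutual annihilators under $\Tr$: the paper computes $(\mathcal{F}^1_0)^\vee\otimes\Omega^1_C$ to be the sheaf of trace-free endomorphisms preserving $l^{(i)}_j$ and $V^{(i)}_{j,k}$ with \emph{no} condition involving the $\pi^{(i)}_{j,k}$, which strictly contains $\mathcal{F}^0_0$. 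One can see the failure numerically: self-duality would force $\chi(\mathcal{F}^0_0)+\chi(\mathcal{F}^1_0)=0$, whereas the paper's degree counts give $\chi(\mathcal{F}^0_x)+\chi(\mathcal{F}^1_x)=\sum_{i,j}r^{(i)}_j(r^{(i)}_j-1)(1-m_i)$ (the trace parts contribute zero), which is strictly negative whenever some $r^{(i)}_j\geq 2$, since the standing hypotheses then force $m_i\geq 2$. Concretely, for $r=2$, one point, one block, pole order $m$, the local colengths are $\delta^0=2m-1$ and $\delta^1=4m-1$; perpendicularity under the local duality pairing would require $\delta^0+\delta^1=r^2m=4m$, but $\delta^0+\delta^1=6m-2$. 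So the complex is self-dual only in the unramified or regular-singular situations, never in the cases this theorem is about.

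This error propagates to both places you use it. For smoothness, the correct statement (and what the paper proves) is that $\mathbf{H}^2(\mathcal{F}^\bullet_0)$ is dual to the kernel of $H^0(\nabla^{\dag})$ on the genuinely different complex $\bigl[(\mathcal{F}^1_0)^\vee\otimes\Omega^1_C\longrightarrow(\mathcal{F}^0_0)^\vee\otimes\Omega^1_C\bigr]$; this kernel still vanishes, because its elements are $\nabla$-flat trace-free endomorphisms preserving $l^{(i)}_j$, and stability applies. But before invoking this one must check that $\nabla^{\dag}$ even maps the first sheaf into the second, i.e.\ that $\Tr\bigl((\nabla u-u\nabla)\circ u'\bigr)\in\Omega^1_C$ for all $u'\in\mathcal{F}^0_0$; this is a nontrivial local computation using the commutation of the maps induced on $L^{(i)}_{j,k}$ with multiplication by $\nu^{(i)}_{j,k}$, carried out in the paper, for which your proposal offers no substitute. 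For the dimension, the "mutual perpendicularity'' of the local conditions fails, so $\delta^0_i$ and $\delta^1_i$ must be computed separately (they are asymmetric, as above); only their difference, which you correctly target as $\delta^1_i-\delta^0_i=(r-1)m_i$ in the trace-free normalization, enters the final count. In short, your outline reaches the right formulas, but the mechanism you give for the two key steps is based on a self-duality that fails exactly in the ramified setting, and repairing it amounts to the local analysis that constitutes the technical core of the paper's proof. (A minor additional point: genericity of $\bnu$ is not actually needed for this theorem — it enters only for the symplectic form — so it should not be invoked to "rigidify'' the local count.)
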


Before proving the theorem, we prepare a complex which determines a deformation theory
of the moduli space of irregular connections of generic ramified type.
From the construction of the moduli space $M^{\balpha}_{C,D}((r^{(i)}_j),a)$
in Theorem \ref {thm-existence-moduli-generic},
we can see that there exists an \'etale surjective morphism
$M'\longrightarrow M^{\balpha}_{C,D}((r^{(i)}_j),a)$
and a universal family
$(E_{M'},\nabla_{M'},\{ l^{(i)}_{M,j},\tilde{V}^{(i)}_{M',j,k},\tilde{L}^{(i)}_{M',j,k},
\tilde{\pi}^{(i)}_{M',j,k},\phi^{(i)}_{M',j,k}\})$
 of irregular connections of generic ramified type.
Define a complex
${\mathcal F}^{\bullet}_{M'}$ of sheaves on $C\times M'$ by
\begin{align} \label {equation: definition of tangent complex}
\begin{split}
 {\mathcal F}^0_{M'} &=
 \left\{
  u\in{\mathcal End}(E_{M'}) \left|
  \begin{array}{l}
   \text{$u|_{m_it_i\times M'}(l^{(i)}_{M',j})\subset l^{(i)}_{M',j}$ for any $i,j$} \\
   \text{and for the induced homomorphism} \\
   \text{$u^{(i)}_j:l^{(i)}_{M',j}/l^{(i)}_{M',j+1} 
   \longrightarrow l^{(i)}_{M',j}/l^{(i)}_{M',j+1}$,} \\
   \text{$u^{(i)}_j(V^{(i)}_{M',j,k})\subset V^{(i)}_{M',j,k}$ and} \\
   \text{$\bar{\pi}^{(i)}_{M',j,k}((u^{(i)}_j\otimes\mathrm{id})
   (\ker\bar{\pi}^{(i)}_{M',j,k}))=0$ for any $k$}  \\
  \end{array}
 \right.\right\}, \\
 {\mathcal F}^1_{M'} &=
 \left\{
  v\in{\mathcal End}(E_{M'})\otimes\Omega^1_C(D) \left|
  \begin{array}{l}
   \text{$v|_{m_it_i\times M'}(l^{(i)}_{M',j})\subset l^{(i)}_{M',j}\otimes\Omega^1_C(D)$ for any $i,j$} \\
   \text{and for the induced homomorphism} \\
   \text{$v^{(i)}_j:l^{(i)}_{M',j}/l^{(i)}_{M',j+1}\longrightarrow
   l^{(i)}_{M',j}/l^{(i)}_{M',j+1}\otimes\Omega^1_C(D)$,} \\
   \text{$v^{(i)}_j(V^{(i)}_{M',j,k})\subset V^{(i)}_{M',j,k}\otimes\Omega^1_C(D)$
   for any $k$ and} \\
   \text{$(\bar{\pi}^{(i)}_{j,k}\otimes\mathrm{id})(v^{(i)}_j(V^{(i)}_{M',j,k}))=0$ for any $k$}
  \end{array}
 \right.\right\}, \\
 & \nabla_{{\mathcal F}^{\bullet}_{M'}}\colon {\mathcal F}^0_{M'}
 \ni u \ \mapsto \ \nabla u-u\nabla\in{\mathcal F}^1_{M'}.
\end{split}
\end{align}

For the proof of Theorem \ref{thm:generic-smooth},
we use the morphism
\begin{align} \label {equation: determinant morphism}
\begin{split}
 \det \ \colon \  M^{\balpha}_{C,D}((r^{(i)}_j),a)  \ \longrightarrow \ &
 M_{C,D}(1,a)\times_{N((1),a)} N((r^{(i)}_j),a) \\
 (E,\nabla,\{l^{(i)}_j,V^{(i)}_{j,k},L^{(i)}_{j,k},\pi^{(i)}_{j,k},\phi^{(i)}_{j,k}\})
 \mapsto  & \ (\det(E,\nabla),\bnu),
 \end{split}
\end{align}
where $M_{C,D}(1,a)$ is the moduli space of pairs $(L,\nabla_L)$
of a line bundle $L$ on $C$ of degree $a$ and a connection
$\nabla_L\colon L\longrightarrow L\otimes\Omega^1_C(D)$
and we set
\begin{gather*}
 N((1),a)=\left.\left\{
 (\lambda^{(i)})_{1\leq i\leq n}\in\mathbb{C}[z_i]/(z_i^{m_i})
 \dfrac{dz_i}{z_i^{m_i}}\right|
 a+\sum_{i=1}^n\res(\lambda^{(i)})=0\right\} \\
 M_{C,D}(1,a)\ \ni \ (L,\nabla_L) \ \mapsto \ \left(\nabla|_{m_it_i}\right)_{1\leq i\leq n}
 \ \in \ N((1),a) \\
 N((r^{(i)}_j),a)\ \ni \ \big( \nu^{(i)}_j(w^{(i)}_j) \big) \ \mapsto \
 \bigg(\sum_{j=0}^{s_i-1}
 \Big( r^{(i)}_j\nu^{(i)}_{j,0}(z_i)+(r^{(i)}_j-1)dz_i/2z_i\Big)\bigg)_{1\leq i\leq n}\in \ N((1),a).
\end{gather*}
Recall that $\nu^{(i)}_{j,0}(z_i)$ is determined from
$\nu^{(i)}_j(w^{(i)}_j)$ by the equality
\[
 \nu^{(i)}_j(w^{(i)}_j)=
 \nu^{(i)}_{j,0}(z_i)+\nu^{(i)}_{j,1}(z_i)w^{(i)}_j+\cdots+\nu^{(i)}_{j,r^{(i)}_j-1}(z_i)(w^{(i)}_j)^{r^{(i)}_j-1}.
\]
Note that $M_{C,D}(1,a)$ is an affine space bundle over the Jacobian variety of $C$
and so it is smooth over $N((1),a)$.
So it is sufficient to prove the following proposition 
for the smoothness of $M^{\balpha}_{C,D}((r^{(i)}_j),a)$.

\begin{proposition}\label{prop:det-smooth}
The morphism
$\det \colon
M^{\balpha}_{C,D}((r^{(i)}_j),a)\longrightarrow M_{C,D}(1,a)\times_{N((1),a)} N((r^{(i)}_j),a)$
defined in (\ref {equation: determinant morphism}) is a smooth morphism.
\end{proposition}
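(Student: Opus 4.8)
The plan is to verify smoothness of $\det$ through the infinitesimal lifting criterion and to reduce the existence of lifts to the vanishing of a relative obstruction group, which I will then compute by Serre duality. Concretely, I would start from a small extension $A'\twoheadrightarrow A$ of Artinian local $\mathbb{C}$-algebras with square-zero kernel $I$, together with an $A$-valued point of $M^{\balpha}_{C,D}((r^{(i)}_j),a)$ and a compatible $A'$-valued point of $M_{C,D}(1,a)\times_{N((1),a)}N((r^{(i)}_j),a)$, and I must produce an $A'$-valued lift to $M^{\balpha}_{C,D}((r^{(i)}_j),a)$ projecting to the given data. The obstruction to such a lift lives in the second hypercohomology of a two-term deformation complex attached to the object, and the constraint of fixing the determinant and the exponent cuts this obstruction down to the trace-free part. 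Since $M_{C,D}(1,a)\times_{N((1),a)}N((r^{(i)}_j),a)$ is already smooth over $N((r^{(i)}_j),a)$ (it is an affine space bundle over the Jacobian of $C$), smoothness of $\det$ is equivalent to the vanishing of that trace-free obstruction.

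First I would set up the deformation complex. To a parabolic connection $(E,\nabla,\{l^{(i)}_j,V^{(i)}_{j,k},L^{(i)}_{j,k},\pi^{(i)}_{j,k},\phi^{(i)}_{j,k}\})$ of generic ramified type I attach a complex $\mathcal{F}^\bullet=[\mathcal{F}^0\xrightarrow{\ \nabla\ }\mathcal{F}^1]$ concentrated in degrees $0$ and $1$, where $\mathcal{F}^0\subset\mathcal{E}nd(E)$ is the sheaf of endomorphisms preserving the filtrations $l^{(i)}_j$ and $V^{(i)}_{j,k}$ and compatible with the quotients $\pi^{(i)}_{j,k}$ and the maps $\phi^{(i)}_{j,k}$, where $\mathcal{F}^1\subset\mathcal{E}nd(E)\otimes\Omega^1_C(D)$ is the corresponding sheaf of $\Omega^1_C(D)$-valued endomorphisms that keep the exponent $\bnu$ fixed, and where the differential is $u\mapsto\nabla u-u\nabla$. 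Then $\mathbb{H}^1(\mathcal{F}^\bullet)$ is the tangent space of the fiber of the moduli over $N((r^{(i)}_j),a)$ and $\mathbb{H}^2(\mathcal{F}^\bullet)$ is its obstruction space. The trace map $\Tr$ induces a morphism $\mathcal{F}^\bullet\to\mathcal{G}^\bullet$ onto the analogous rank-one complex governing $(\det E,\det\nabla)$, and I let $\mathcal{F}^\bullet_0=\ker(\Tr)$, with degree-$0$ and degree-$1$ terms $\mathcal{F}^0_0$ and $\mathcal{F}^1_0$, be the trace-free subcomplex. The relative obstruction for $\det$ is then exactly $\mathbb{H}^2(\mathcal{F}^\bullet_0)$, so the task reduces to proving $\mathbb{H}^2(\mathcal{F}^\bullet_0)=0$.

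Next I would compute $\mathbb{H}^2(\mathcal{F}^\bullet_0)$ by Serre duality on $C$. Since $\mathcal{F}^\bullet_0$ is a two-term complex of locally free sheaves placed in degrees $0,1$, duality gives $\mathbb{H}^2(\mathcal{F}^\bullet_0)^\vee\cong\mathbb{H}^{-1}\big(R\mathcal{H}om(\mathcal{F}^\bullet_0,\mathcal{O}_C)\otimes\omega_C\big)$. The key claim is that the Serre-dual complex, reindexed into degrees $0,1$, is again the trace-free deformation complex $\mathcal{F}^\bullet_0$ itself: using $\mathcal{F}^1_0\cong\mathcal{F}^0_0\otimes\Omega^1_C(D)$ away from $D$ and $\omega_C=\Omega^1_C$, the twist by $(D)$ is absorbed by the residue pairing at the divisor $D$, and the local modifications encoded by $l^{(i)}_j,V^{(i)}_{j,k},\pi^{(i)}_{j,k},\phi^{(i)}_{j,k}$ match their dual modifications level by level. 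Granting this self-duality, I obtain $\mathbb{H}^2(\mathcal{F}^\bullet_0)^\vee\cong\mathbb{H}^0(\mathcal{F}^\bullet_0)$, and $\mathbb{H}^0(\mathcal{F}^\bullet_0)$ is the space of trace-free horizontal endomorphisms of $E$ preserving all the parabolic data. By the stability remark following the definition of $\balpha$-stability, any such endomorphism is a scalar, and a trace-free scalar vanishes; hence $\mathbb{H}^0(\mathcal{F}^\bullet_0)=0$, so $\mathbb{H}^2(\mathcal{F}^\bullet_0)=0$ and $\det$ is smooth.

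The hardest part will be the local self-duality computation at the ramified points. Away from $D$ the pairing is the ordinary trace pairing and the identification $\mathcal{H}om(\mathcal{F}^1_0,\omega_C)\cong\mathcal{F}^0_0$ is immediate; but at each $m_it_i$ one must verify that the subsheaf cut out by compatibility with $(l^{(i)}_j,V^{(i)}_{j,k},\pi^{(i)}_{j,k},\phi^{(i)}_{j,k})$ is precisely the annihilator, under the residue pairing, of the corresponding dual subsheaf. This requires an explicit local analysis of the generic ramified normal form, for which I would use the description established in Proposition \ref{prop:generic-2} together with the order computations on the $\varpi_k(e_k)$ carried out there; matching the filtration levels and the gluing data $\phi^{(i)}_{j,k}$ on the two sides of the pairing is the genuinely technical step, and it is exactly where the genericity of $\bnu$—the nonvanishing of the $\dfrac{w^{(i)}_jdw^{(i)}_j}{(w^{(i)}_j)^{m_ir^{(i)}_j-r^{(i)}_j+1}}$-coefficient—enters to guarantee that these local subsheaves are free of the expected rank and mutually dual.
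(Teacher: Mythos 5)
Your outline (infinitesimal lifting criterion, trace-free deformation complex, reduce smoothness of $\det$ to $\mathbf{H}^2(\bar{\mathcal F}^{\bullet}_0)=0$, and kill the Serre dual by $\balpha$-stability) coincides with the paper's strategy, but the step you lean on — that the Serre dual of $\mathcal{F}^{\bullet}_0$, reindexed into degrees $0,1$, ``is again the trace-free deformation complex $\mathcal{F}^{\bullet}_0$ itself'' — is false in the genuinely ramified case, and no local analysis at the ramified points can repair it. A degree count already rules it out: term-wise self-duality would force $\chi(\mathcal{F}^0_0)+\chi(\mathcal{F}^1_0)=0$, since $\chi(G^{\vee}\otimes\Omega^1_C)=-\chi(G)$ for any locally free $G$ on $C$; but the Euler characteristics computed in the dimension count of Section \ref{section:smoothness-and-dimension} give
\[
 \chi({\mathcal F}^0_x)+\chi({\mathcal F}^1_x)
 =\sum_{i=1}^n(1-m_i)\sum_{j=0}^{s_i-1}\left((r^{(i)}_j)^2-r^{(i)}_j\right),
\]
and the trace parts contribute $\chi({\mathcal O}_C)+\chi(\Omega^1_C)=0$, so the same value is obtained for the trace-free complex. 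This is strictly negative as soon as some $m_i\geq 2$ and $r^{(i)}_j\geq 2$, i.e.\ precisely in the ramified situation the paper is about. The source of the asymmetry is that the degree-$0$ condition ($u$ preserves $\ker\bar{\pi}^{(i)}_{j,k}$) and the degree-$1$ condition ($v$ maps $V^{(i)}_{j,k}$ into $\ker\bar{\pi}^{(i)}_{j,k}\otimes\Omega^1_C(D)$) are not exchanged by the trace/residue pairing: there is a length mismatch of $(m_i-1)\sum_j((r^{(i)}_j)^2-r^{(i)}_j)$ at each $t_i$. Consistently with this, the paper has to work hard in Section \ref{section:symplectic-form} — and only under the genericity hypothesis $c^{(i)}_{j,1}\neq 0$ — to get non-degeneracy of the $2$-form; if your self-duality held, that would have been automatic.

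The correct route, which is what the paper does, is to dualize term by term and accept that the dual complex is a different complex. Since $C$ is a curve, the spectral sequence gives $\mathbf{H}^2(\bar{\mathcal F}^{\bullet}_0)\cong\coker\bigl(H^1(\bar{\mathcal F}^0_0)\to H^1(\bar{\mathcal F}^1_0)\bigr)$, and Serre duality identifies this with the dual of $\ker\bigl(H^0((\bar{\mathcal F}^1_0)^{\vee}\otimes\Omega^1_C)\xrightarrow{-\nabla^{\dag}}H^0((\bar{\mathcal F}^0_0)^{\vee}\otimes\Omega^1_C)\bigr)$. Here $(\bar{\mathcal F}^1_0)^{\vee}\otimes\Omega^1_C$ is strictly larger than $\bar{\mathcal F}^0_0$: it consists of trace-free endomorphisms preserving the $l^{(i)}_j$ and $V^{(i)}_{j,k}$ with \emph{no} compatibility with $\pi^{(i)}_{j,k}$ imposed, while $(\bar{\mathcal F}^0_0)^{\vee}\otimes\Omega^1_C$ is the tautological annihilator of $\bar{\mathcal F}^0_0$ under the trace pairing. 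Two things then need proving that your proposal does not address: first, that $u\mapsto\nabla u-u\nabla$ really carries the former into the latter (the paper's local computation with the induced maps $\beta^{(i)}_{j,k}$, $\gamma^{(i)}_{j,k}$, using that $\gamma^{(i)}_{j,k}$ commutes with $\nu^{(i)}_{j,k}$); second, that stability still kills the kernel — which it does, because $\balpha$-stability only involves preservation of the $l^{(i)}_j$, not of the $\pi$-data. Two smaller points: Proposition \ref{prop:det-smooth} holds for \emph{every} $\bnu\in N((r^{(i)}_j),a)$, and the paper's proof nowhere uses the coefficient genericity $c^{(i)}_{j,1}\neq 0$ that your last paragraph invokes; and you treat the existence of the obstruction class as formal, whereas constructing local lifts with prescribed determinant and prescribed exponent (so that the obstruction actually lands in the trace-free complex) is a substantial explicit step in the paper's argument.
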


\begin{proof}
Take an Artinian local ring $A$ over $\mathbb{C}$ with the maximal ideal $\mathfrak{m}$
and an ideal $I$ of $A$ satisfying $\mathfrak{m}I=0$.
Assume that a commutative diagram
\[
 \begin{CD}
  \Spec A/I @>f>> {\mathcal M}^{\balpha}_{C,D}((r^{(i)}_j),a) \\
  @VVV   @V\det VV \\
  \Spec A @>g>> M_{C,D}((1),a)\times_{N((1),a)}N((r^{(i)}_j),a)
 \end{CD}
\]
is given.
The morphism $g$ corresponds to a tuple
$((L,\nabla_L),\tilde{\bnu})$,
where $L$ is a line bundle on $C\times\Spec A$,
$\nabla_L\colon L\longrightarrow L\otimes
\Omega^1_{C\times\Spec A/\Spec A}(D\times\Spec A)$ is a relative connection
and $\tilde{\bnu}=(\tilde{\nu}^{(i)}_{j,k})\in N((r^{(i)}_j),a)(A)$ satisfies
\[
 \nabla_L|_{m_it_i\times\Spec A}=
 \sum_{j=0}^{s_i-1} \Big(
 r^{(i)}_j\tilde{\nu}^{(i)}_{j,0}(z_i) +(r^{(i)}_j-1)dz_i/2z_i \Big).
\]
If we put $\bnu=(\nu^{(i)}_j(z_i)):=\tilde{\bnu}\otimes A/I$,
$f$ corresponds to a flat family
$(E,\nabla,\{l^{(i)}_j,V^{(i)}_{j,k},L^{(i)}_{j,k},\pi^{(i)}_{j,k},\phi^{(i)}_{j,k}\})$
of parabolic connections of generic ramified type
over $A/I$ with the exponent $\bnu$.
We can choose an isomorphism
$\psi^{(i)}_{j,k}\colon
L^{(i)}_{j,k} \xrightarrow{\sim}
(w^{(i)}_j)\otimes L^{(i)}_{j,k-1}$
of $(A/I)[w^{(i)}_j]/((w^{(i)}_j)^{m_ir^{(i)}_j-r^{(i)}_j+1})$-modules
such that the composition
$L^{(i)}_{j,k} \xrightarrow[\sim]{\psi^{(i)}_{j,k}}
(w^{(i)}_j)\otimes L^{(i)}_{j,k-1}
\longrightarrow w^{(i)}_j L^{(i)}_{j,k-1}\hookrightarrow L^{(i)}_{j,k-1}$
coincides with $\phi^{(i)}_{j,k}$.

Take an affine open covering
$\{U_{\alpha}\}$ of $C$ such that $\sharp\{\alpha|t_i\in U_{\alpha}\}=1$ for any $i$
and $\sharp\{i|t_i\in U_{\alpha}\}\leq 1$ for any $\alpha$.
For each $i$, we take $\alpha$ with $t_i\in U_{\alpha}$.
By shrinking $U_{\alpha}$ if necessary, we can take a basis
$\{ e^{(i)}_{j,k} \}$ of $E|_{U_{\alpha}\times\Spec A/I}$
with the following properties
\begin{itemize}
\item
$l^{(i)}_j$ is generated by 
$\left\{ e^{(i)}_{j',k}|_{m_it_i\times \Spec A/I} \, 
\middle| \, j'\geq j, 0\leq k\leq r^{(i)}_j-1 \right\}$
\item
the induced class 
$f^{(i)}_{j,k}=\overline{e^{(i)}_{j,k}|_{m_it_i\times\Spec A/I}}\in l^{(i)}_j/l^{(i)}_{j+1}$
belongs to $V^{(i)}_{j,k}$
and
\item
the image of 
$\pi^{(i)}_{j,k}(f^{(i)}_{j,k})\in L^{(i)}_{j,k}$ in $((w^{(i)}_j)^k)\otimes L^{(i)}_{j,0}$
via the isomorphism
\[
 L^{(i)}_{j,k}\xrightarrow[\sim]{\psi^{(i)}_{j,k}}
 (w^{(i)}_j)\otimes L^{(i)}_{j,k-1}
 \xrightarrow[\sim]{\psi^{(i)}_{j,k-1}}
 ((w^{(i)}_j)^2)\otimes L^{(i)}_{j,k-2}
 \xrightarrow[\sim]{\psi^{(i)}_{j,k-2}}
 \cdots \xrightarrow[\sim]{\psi^{(i)}_{j,1}}
 ((w^{(i)}_j)^k)\otimes L^{(i)}_{j,0}
\]
is equal to $(w^{(i)}_j)^k\otimes\pi^{(i)}_{j,0}(f^{(i)}_{j,0})$.
\end{itemize}
From the commutativity of the diagram in
Definition \ref  {def:moduli-functor} (b),
we can see that
the representation matrix of
the homomorphism
$\nabla^{(i)}_j\colon
V^{(i)}_{j,0}\longrightarrow
V^{(i)}_{j,0}\otimes\Omega^1_C(D)$
induced by $\nabla|_{m_it_i\times\Spec A/I}$,
with respect to the basis
$f^{(i)}_{j,0},\ldots,f^{(i)}_{j,r^{(i)}_j-1}$,
is given by
\[
 \begin{pmatrix}
  \nu^{(i)}_{j,0}(z_i) & z_i\nu^{(i)}_{j,r^{(i)}_j-1}(z_i) & \cdots & z_i\nu^{(i)}_{j,1}(z_i) \\
  \nu'^{(i)}_{j,1}(z_i) & \nu^{(i)}_{j,0}(z_i)+\frac{1}{r^{(i)}_j}\frac{dz_i}{z_i}
   & \cdots & z_i\nu^{(i)}_{j,2}(z_i) \\
  \vdots & \vdots & \ddots & \vdots \\
  \nu'^{(i)}_{j,r^{(i)}_j-1}(z_i) & \nu'^{(i)}_{j,r^{(i)}_j-2}(z_i) 
  & \cdots & \nu^{(i)}_{j,0}(z_i)+\frac{r^{(i)}_j-1}{r^{(i)}_j}\frac{dz_i}{z_i}
 \end{pmatrix},
\]
where $\nu'^{(i)}_{j,k}(z_i)$ satisfies
$\nu'^{(i)}_{j,k}(z_i)\equiv \nu^{(i)}_{j,k}(z_i) \pmod{z_i^{m_i-1}dz_i/z_i^{m_i}}$
for $k=1,\ldots,r^{(i)}_j-1$.

We take a free $A[w^{(i)}_j]/((w^{(i)}_j)^{m_ir^{(i)}_j-r^{(i)}_j+1})$-module
$\tilde{L}^{(i)}_{j,0}$ of rank one
which is a lift of $L^{(i)}_{j,0}$.
Then we can determine lifts
$\tilde{L}^{(i)}_{j,k}$ of $L^{(i)}_{j,k}$
as free $A[w^{(i)}_j]/((w^{(i)}_j)^{m_ir^{(i)}_j-r^{(i)}_j+1})$-modules of rank one,
together with lifts
$\tilde{\psi}^{(i)}_{j,k}\colon
\tilde{L}^{(i)}_{j,k}\xrightarrow{\sim}
(w^{(i)}_j)\otimes\tilde{L}^{(i)}_{j,k-1}$
of $\psi^{(i)}_{j,k}$
for $k=1,\ldots,r^{(i)}_j-1$.
We define $\tilde{\phi}^{(i)}_{j,k}$ as the composition
$\tilde{\phi}^{(i)}_{j,k}\colon
\tilde{L}^{(i)}_{j,k}\xrightarrow[\sim]{\tilde{\psi}^{(i)}_{j,k}}
(w^{(i)}_j)\otimes\tilde{L}^{(i)}_{j,k-1}
\longrightarrow w^{(i)}_j\tilde{L}^{(i)}_{j,k-1}$
and define 
$\tilde{\phi}^{(i)}_{r^{(i)}_j}\colon  (z_i)\otimes \tilde{L}^{(i)}_{j,0}
\longrightarrow\tilde{L}^{(i)}_{r^{(i)}_j-1}$
as the composition
\[
 (z_i)\otimes \tilde{L}^{(i)}_{j,0}\longrightarrow 
 ((w^{(i)}_j)^{r^{(i)}_j-1})\otimes \tilde{L}^{(i)}_{j,0}
 \xrightarrow[\sim]{(\tilde{\psi}^{(i)}_{j,1})^{-1}}
 ((w^{(i)}_j)^{r^{(i)}_j-2})\otimes \tilde{L}^{(i)}_{j,1}
 \xrightarrow[\sim]{(\tilde{\psi}^{(i)}_{j,r^{(i)}_j-1})^{-1}\circ\cdots\circ(\tilde{\psi}^{(i)}_{j,2})^{-1}}
 \tilde{L}^{(i)}_{r^{(i)}_j-1}.
\]

Choose a lift $\epsilon^{(i)}_{j,0}\in \tilde{L}^{(i)}_{j,0}$
of the generator $\pi^{(i)}_{j,0}(f^{(i)}_{j,0})$ of  $L^{(i)}_{j,0}$.
Let $\epsilon^{(i)}_{j,k}$ be the element of $\tilde{L}^{(i)}_{j,k}$
corresponding to $(w^{(i)}_j)^k\otimes \epsilon^{(i)}_{j,0}$
via the isomorphism
\[
 \tilde{L}^{(i)}_{j,k}
 \xrightarrow[\sim]{\tilde{\psi}^{(i)}_{j,k}}
 (w^{(i)}_j)\otimes\tilde{L}^{(i)}_{j,k-1}
 \xrightarrow[\sim]{\tilde{\psi}^{(i)}_{j,k-1}}
 ((w^{(i)}_j)^2)\otimes \tilde{L}^{(i)}_{j,k-2}
 \xrightarrow[\sim]{\tilde{\psi}^{(i)}_{j,k-2}}
 \cdots\xrightarrow[\sim]{\tilde{\psi}^{(i)}_{j,1}}
 ((w^{(i)}_j)^k)\otimes \tilde{L}^{(i)}_{j,0}.
\]
We take a free ${\mathcal O}_{U_{\alpha}\times \Spec A}$-module
$E_{\alpha}$ of rank $r$ with a basis 
$\{\tilde{e}^{(i)}_{j,k}\}_{0\leq j\leq s_i-1,0\leq k\leq r^{(i)}_j-1}$
and an isomorphism
$E_{\alpha}\otimes A/I\xrightarrow[\sim]{\tau_{\alpha}}
E|_{U_{\alpha}\times\Spec A/I}$
sending $\tilde{e}^{(i)}_{j,k}\otimes A/I$ to $e^{(i)}_{j,k}$.
We define $\tilde{l}^{(i)}_j\subset E_{\alpha}|_{m_it_i\times\Spec A}$
as the submodule generated by
$\left\{ \tilde{e}^{(i)}_{j',k} \, \middle| \, j'\geq j\right\}$.
Let $\tilde{f}^{(i)}_{j,k}$ be the image of
$\tilde{e}^{(i)}_{j,k}$ in $\tilde{l}^{(i)}_j/\tilde{l}^{(i)}_{j+1}$.
We define $\tilde{V}_{j,k}$ as the submodule of
$\tilde{l}^{(i)}_j/\tilde{l}^{(i)}_{j+1}$
generated by
$\tilde{f}^{(i)}_{j,k},\tilde{f}^{(i)}_{j,k+1},\ldots,\tilde{f}^{(i)}_{j,r^{(i)}_j-1},
z_i\tilde{f}^{(i)}_{j,0},\ldots,z_i\tilde{f}^{(i)}_{j,k-1}$.

Define a homomorphism
\[
 \tilde{\pi}^{(i)}_{j,k}
 \colon
 \tilde{V}^{(i)}_{j,k}\otimes A[w^{(i)}_j]/((w^{(i)}_j)^{m_ir^{(i)}_j-r^{(i)}_j+1})
 \longrightarrow \tilde{L}^{(i)}_{j,k}
\]
by setting
\begin{align*}
 \tilde{\pi}^{(i)}_{j,k}(\tilde{f}^{(i)}_{j,l})
 &:=
 \begin{cases}
  \epsilon^{(i)}_{j,k} & (\text{when $l=k$}) \\
  (\tilde{\phi}^{(i)}_{j,k+1}\circ\tilde{\phi}^{(i)}_{j,l})(\epsilon^{(i)}_{j,l})
  =(w^{(i)}_j)^{l-k}\epsilon^{(i)}_{j,k}
  & (\text{when $l>k$})
 \end{cases}
  \\
  \tilde{\pi}^{(i)}_{j,k} (z_i\tilde{f}^{(i)}_{j,l})
  &:=
  \Big(\tilde{\phi}_{j,k+1}\circ\cdots\circ\tilde{\phi}^{(i)}_{j,r^{(i)}_j}
  \circ(\mathrm{id}_{(z_i)}\otimes\tilde{\phi}^{(i)}_{j,1})
  \circ\cdots\circ(\mathrm{id}_{(z_i)}\otimes\tilde{\phi}^{(i)}_{j,l})\Big)
  (z_i\otimes \epsilon^{(i)}_{j,l})
  \\
  &=
  (w^{(i)}_j)^{r^{(i)}_j+l-k}\epsilon^{(i)}_{j,k}
  \hspace{30pt}  (\text{when $l<k$}).
\end{align*}
By the construction, the diagrams
\[
 \begin{CD}
 \tilde{V}^{(i)}_{j,k}\otimes A[w^{(i)}_j]/((w^{(i)}_j)^{m_ir^{(i)}_j-r^{(i)}_j+1})
 @>\tilde{\pi}^{(i)}_{j,k}>> \tilde{L}^{(i)}_{j,k} \\
 @VVV  @VV\tilde{\phi}^{(i)}_{j,k}V \\
 \tilde{V}^{(i)}_{j,k-1}\otimes A[w^{(i)}_j]/((w^{(i)}_j)^{m_ir^{(i)}_j-r^{(i)}_j+1})
 @>\tilde{\pi}^{(i)}_{j,k-1}>> \tilde{L}^{(i)}_{j,k-1} \\
 \end{CD}
\]
are commutative
for $k=1,\ldots,r^{(i)}_j-1$ and the diagram
\[
 \begin{CD}
  (z_i)/(z_i^{m_i+1})\otimes V^{(i)}_{j,0} @>1\otimes\pi^{(i)}_{j,0}>> (z_i)/(z_i^{m_i+1})\otimes L^{(i)}_{j,0} \\
  @VVV  @VV\tilde{\phi}^{(i)}_{j,r^{(i)}_j}V \\
  \tilde{V}^{(i)}_{j,r^{(i)}_j-1}\otimes A[w^{(i)}_j]/((w^{(i)}_j)^{m_ir^{(i)}_j-r^{(i)}_j+1})
 @>\tilde{\pi}^{(i)}_{j,r^{(i)}_j-1}>> \tilde{L}^{(i)}_{j,r^{(i)}_j-1} 
 \end{CD}
\]
also commutes.
Define a homomorphism
$\tilde{l}^{(i)}_j/\tilde{l}^{(i)}_{j+1}
\longrightarrow
\tilde{l}^{(i)}_j/\tilde{l}^{(i)}_{j+1} \otimes \Omega^1_C(D)$
whose representation matrix with respect to the basis
$\tilde{f}^{(i)}_{j,0},\ldots,\tilde{f}^{(i)}_{j,r^{(i)}_j-1}$ is
\[
 \begin{pmatrix}
  \tilde{\nu}^{(i)}_{j,0}(z_i) & z_i\tilde{\nu}^{(i)}_{j,r^{(i)}-1}(z_i) & \cdots & z_i\tilde{\nu}^{(i)}_{j,1}(z_i) \\
  \tilde{\nu}'^{(i)}_{j,1}(z_i) & \tilde{\nu}^{(i)}_{j,0}(z_i)+\frac{1}{r^{(i)}_j}\frac{dz_i}{z_i}
   & \cdots & z_i\tilde{\nu}^{(i)}_{j,2}(z_i) \\
  \vdots & \vdots & \ddots & \vdots \\
  \tilde{\nu}'^{(i)}_{j,r^{(i)}_j-1}(z_i) & \tilde{\nu}'^{(i)}_{j,r^{(i)}_j-2}(z_i)
   & \cdots & \tilde{\nu}^{(i)}_{j,0}(z_i)+\frac{r^{(i)}_j-1}{r^{(i)}_j}\frac{dz_i}{z_i}
 \end{pmatrix},
\]
where $\tilde{\nu}'^{(i)}_{j,k}(z_i)$ is a lift of $\nu'^{(i)}_{j,k}(z_i)$ satisfying
$\tilde{\nu}'^{(i)}_{j,k}(z_i)\equiv \tilde{\nu}^{(i)}_{j,k}(z_i) \pmod {z_i^{m_i-1}dz_i/z_i^{m_i}}$.
Then the diagrams
\[
\begin{CD}
 \tilde{V}^{(i)}_{j,k}\otimes A[w^{(i)}_j]/((w^{(i)}_j)^{m_ir^{(i)}_j-r^{(i)}_j+1})
 @>\tilde{\pi}^{(i)}_{j,k}>> \tilde{L}^{(i)}_{j,k} \\
 @V\tilde{\nabla}^{(i)}_j|_{\tilde{V}^{(i)}_{j,k}}\otimes\mathrm{id} VV
 @VV\tilde{\nu}^{(i)}_j(w^{(i)}_j)+\frac{k\,dz_i}{r^{(i)}_j z_i} V \\
 \tilde{V}^{(i)}_{j,k}\otimes\Omega^1_C(D)
 \otimes A[w^{(i)}_j]/((w^{(i)}_j)^{m_ir^{(i)}_j-r^{(i)}_j+1})
 @>\tilde{\pi}^{(i)}_{j,k}\otimes 1 >> \tilde{L}^{(i)}_{j,k}\otimes\Omega^1_C(D) 
 \end{CD}
\]
are commutative for $k=0,\ldots,r^{(i)}_j-1$.

Choose an isomorphism
$\bigwedge^r E_{\alpha}\xrightarrow[\sim]{\sigma_{\alpha}}L|_{U_{\alpha}\otimes A}$
such that $\sigma_{\alpha}\otimes A/I$
is the given isomorphism
$\bigwedge^rE|_{U_{\alpha}\otimes A/I}\xrightarrow{\sim}L|_{U_{\alpha}\otimes A/I}$.
We can give a connection
$\nabla_{\alpha}\colon E_{\alpha}\longrightarrow E_{\alpha}\otimes\Omega^1_C(D)$
which induces $\tilde{\nabla}^{(i)}_j$ on $\tilde{l}^{(i)}_j/\tilde{l}^{(i)}_{j+1}$ for each $i,j$.
Then we have
\[
 \Tr\left( \nabla_{\alpha}|_{m_it_i\times\Spec A} \right)
 =\sum_{j=0}^{s_i-1} \left( r^{(i)}_j\tilde{\nu}^{(i)}_{j,0}(z_i)+(r^{(i)}_j-1)dz_i/z_i\right).
\]
So, after adjusting the diagonal entries of $\nabla_{\alpha}$, we may assume that
the connection $\det(E_{\alpha},\nabla_{\alpha})$ induced by $\nabla_{\alpha}$
on the determinant bundle
is transformed to the connection $\nabla_L|_{U_{\alpha}\times\Spec A}$
via the isomorphism
$\sigma_{\alpha}$.
Thus we obtain a local parabolic connection
$(E_{\alpha},\nabla_{\alpha},\{\tilde{l}^{(i)}_j,\tilde{V}^{(i)}_{j,k},\tilde{L}^{(i)}_{j,k},
\tilde{\pi}^{(i)}_{j,k},\tilde{\phi}^{(i)}_{j,k}\})$ 
of generic ramified type with the exponent $\tilde{\bnu}$
on $U_{\alpha}\times\Spec A$,
which is a lift of the given parabolic connection
$(E,\nabla,\{l^{(i)}_j,V^{(i)}_{j,k},L^{(i)}_{j,k},\pi^{(i)}_{j,k},\phi^{(i)}_{j,k}\})
\big|_{U_{\alpha}\otimes A/I}$
of generic ramified type on $U_{\alpha}\otimes A/I$.

If $t_i\notin U_{\alpha}$ for any $i$,
then we can easily give a local parabolic connection
on $U_{\alpha}\otimes A$
which is a lift of $(E,\nabla,\{l^{(i)}_j,V^{(i)}_{j,k},L^{(i)}_{j,k},\pi^{(i)}_{j,k},\phi^{(i)}_{j,k}\})|_{U_{\alpha}\otimes A/I}$.
Note that the data
$\{\tilde{l}^{(i)}_j,\tilde{V}^{(i)}_{j,k},\tilde{L}^{(i)}_{j,k},\tilde{\pi}^{(i)}_{j,k},\tilde{\phi}^{(i)}_{j,k}\}$ is nothing 
in this case.

Recall the complex ${\mathcal F}_{M'}^{\bullet}$
defined in (\ref {equation: definition of tangent complex})
and consider its restriction
${\mathcal F}^{\bullet}_x:={\mathcal F}^{\bullet}_{M'}|_{C\times\{x\}}$,
to a point $x$ of $M'$ lying over the given point
$\Spec A/\mathfrak{m}\longrightarrow{\mathcal M}^{\balpha}_{C,D}((r^{(i)}_j),a)$.
Define a complex ${\mathcal F}^{\bullet}_{\mathfrak{sl},x}$ by
\begin{align*}
 {\mathcal F}^0_{\mathfrak{sl},x}
 &:=
 \left\{ u\in {\mathcal F}^0_x \,
  \middle| \,  \Tr(u)=0 \right\} \\
 {\mathcal F}^1_{\mathfrak{sl},x}
 &:=
 \left\{ v\in {\mathcal F}^1_x \,
  \middle| \,  \Tr(v)=0 \right\}
  \\
 \nabla_{{\mathcal F}^{\bullet}_{\mathfrak{sl},x}}
 &:=
 \nabla_{{\mathcal F}^{\bullet}_x}|_{{\mathcal F}^0_{\mathfrak{sl},x}}
 \colon {\mathcal F}^0_{\mathfrak{sl},x}
 \longrightarrow {\mathcal F}^1_{\mathfrak{sl},x}.
\end{align*}
For $U_{\alpha\beta}:=U_{\alpha}\cap U_{\beta}$,
we take a lift
$\theta_{\beta\alpha}:E_{\alpha}|_{U_{\alpha\beta}\otimes A}\xrightarrow{\sim}E_{\beta}|_{U_{\alpha\beta}\otimes A}$
of the canonical isomorphism
$E_{\alpha}\otimes A/I|_{U_{\alpha\beta}\otimes A/I}
\xrightarrow[\sim]{\tau_{\alpha}}E|_{U_{\alpha\beta}\otimes A/I}
\xrightarrow[\sim]{\tau_{\beta}^{-1}}E_{\beta}\otimes A/I|_{U_{\alpha\beta}\otimes A/I}$
such that
$\sigma_{\beta}\circ\det(\theta_{\beta\alpha})=\sigma_{\alpha}$.
We put
\[
 u_{\alpha\beta\gamma} 
 :=
 \tau_{\alpha}(\theta_{\gamma\alpha}^{-1}\theta_{\gamma\beta}\theta_{\beta\alpha}
 -\mathrm{id})\tau_{\alpha}^{-1},
 \quad
 v_{\alpha\beta} 
 :=
 \tau_{\alpha}(\nabla_{\alpha}
 -\theta_{\beta\alpha}^{-1}\nabla_{\beta}\theta_{\beta\alpha})\tau_{\alpha}^{-1}.
\]
Then we have 
$\{u_{\alpha\beta\gamma}\}\in C^2(\{U_{\alpha}\},{\mathcal F}^0_{\mathfrak{sl},x}\otimes I)$
and $\{v_{\alpha\beta}\}\in C^1(\{U_{\alpha}\},{\mathcal F}^1_{\mathfrak{sl},x}\otimes I)$.
We can check that
$\{u_{\beta\gamma\delta}-u_{\alpha\gamma\delta}+u_{\alpha\beta\delta}
-u_{\alpha\beta\gamma}\}=0$ and
$\nabla_{{\mathcal F}^{\bullet}_{\mathfrak{sl},x}}(\{u_{\alpha\beta\gamma}\})
=-\{ v_{\beta\gamma}-v_{\alpha\gamma}+v_{\alpha\beta}\}$.
So we can define an element
\[
 \omega(E,\nabla,\{l^{(i)}_j,V^{(i)}_{j,k},L^{(i)}_{j,k},\pi^{(i)}_{j,k},\phi^{(i)}_{j,k}\})
 :=[(\{u_{\alpha\beta\gamma}\},\{v_{\alpha\beta}\})]
 \in \mathbf{H}^2({\mathcal F}^{\bullet}_{\mathfrak{sl},x}\otimes I).
\]
in the hyper cohomology group
$\mathbf{H}^2({\mathcal F}^{\bullet}_{\mathfrak{sl},x}\otimes I)$.
Considering a gluing condition of the local parabolic connections
$(E_{\alpha},\nabla_{\alpha},\{\tilde{l}^{(i)}_j,\tilde{V}^{(i)}_{j,k},\tilde{L}^{(i)}_{j,k},
\tilde{\pi}^{(i)}_{j,k},\tilde{\phi}^{(i)}_{j,k}\})$,
we can see that the vanishing of the element
$\omega(E,\nabla,\{l^{(i)}_j,V^{(i)}_{j,k},L^{(i)}_{j,k},\pi^{(i)}_{j,k},\phi^{(i)}_{j,k}\})$
in $\mathbf{H}^2({\mathcal F}^{\bullet}_{\mathfrak{sl},x}\otimes I)$
is equivalent to the existence of an $A$-valued point
$(\tilde{E},\tilde{\nabla},\{\tilde{l}^{(i)}_j,\tilde{V}^{(i)}_{j,k},\tilde{L}^{(i)}_{j,k},
\tilde{\pi}^{(i)}_{j,k},\tilde{\phi}^{(i)}_{j,k}\})$
of ${\mathcal M}^{\balpha}_{C,D}((r^{(i)}_j),a)$ such that
\[
 (\tilde{E},\tilde{\nabla},\{\tilde{l}^{(i)}_j,\tilde{V}^{(i)}_{j,k},\tilde{L}^{(i)}_{j,k},
 \tilde{\pi}^{(i)}_{j,k},\tilde{\phi}^{(i)}_{j,k}\})\otimes A/I
 \cong
 (E,\nabla,\{l^{(i)}_j,V^{(i)}_{j,k},L^{(i)}_{j,k},\pi^{(i)}_{j,k},\phi^{(i)}_{j,k}\}).
\]
From the hyper cohomology spectral sequence
$H^q(\bar{\mathcal F}^p_0)\Rightarrow \mathbf{H}^{p+q}(\bar{\mathcal F}^{\bullet}_0)$,
we have
\begin{align*}
 \mathbf{H}^2({\mathcal F}^{\bullet}_{\mathfrak{sl},x}) &\cong
 \coker( H^1({\mathcal F}^0_{\mathfrak{sl},x})\longrightarrow 
 H^1({\mathcal F}^1_{\mathfrak{sl},x})) \\
 &\cong \coker \left( H^0(({\mathcal F}^0_{\mathfrak{sl},x})^{\vee}\otimes\Omega^1_C)^{\vee}
 \longrightarrow H^0(({\mathcal F}^1_{\mathfrak{sl},x})^{\vee}\otimes\Omega^1_C)^{\vee}\right) \\
 &\cong \ker \left( H^0(({\mathcal F}^1_{\mathfrak{sl},x})^{\vee}\otimes\Omega^1_C)
 \xrightarrow{-H^0(\nabla^{\dag})} 
 H^0(({\mathcal F}^0_{\mathfrak{sl},x})^{\vee}\otimes\Omega^1_C)\right)^{\vee}.
\end{align*}
If we put
$(\bar{E},\bar{\nabla},\{\bar{l}^{(i)}_j,\bar{V}^{(i)}_{j,k},
\bar{L}^{(i)}_{j,k},\bar{\pi}^{(i)}_{j,k},\bar{\phi}^{(i)}_{j,k}\})
:=
(E,\nabla,\{l^{(i)}_j,V^{(i)}_{j,k},L^{(i)}_{j,k},\pi^{(i)}_{j,k},\phi^{(i)}_{j,k}\})
\otimes A/\mathfrak{m}$,
then we can check the equalities
\begin{align*}
 ({\mathcal F}^1_{\mathfrak{sl},x})^{\vee}\otimes\Omega^1_C &=
 \left\{
  u\in{\mathcal End}(\bar{E}) \left|
  \begin{array}{l}
   \text{$\Tr(u)=0$, $u|_{m_it_i}(\bar{l}^{(i)}_j)\subset \bar{l}^{(i)}_j$ for any $i,j$ and} \\
   \text{for the induced
   $u^{(i)}_j \colon\bar{l}^{(i)}_j/\bar{l}^{(i)}_{j+1}\longrightarrow\bar{l}^{(i)}_j/\bar{l}^{(i)}_{j+1}$,} \\
   \text{$u^{(i)}_j(V^{(i)}_{j,k})\subset V^{(i)}_{j,k}$ for any $k$} \\
  \end{array}
 \right.
 \right\} \\
 ({\mathcal F}^0_{\mathfrak{sl},x})^{\vee}\otimes\Omega^1_C &=
 \left\{
  v\in{\mathcal End}(\bar{E})\otimes\Omega^1_C(D)
  \left|
  \begin{array}{l}
   \text{$\Tr(v)=0$ and $\Tr(v\circ u)\in\Omega^1_C$} \\
   \text{for any $u\in {\mathcal F}^0_{\mathfrak{sl},x}$}
  \end{array}
  \right.
 \right\}, \\
 & \nabla^{\dag}\colon
 ({\mathcal F}^1_{\mathfrak{sl},x})^{\vee}\otimes\Omega^1_C
 \ni u\mapsto \nabla u-u\nabla\in
 ({\mathcal F}^0_{\mathfrak{sl},x})^{\vee}\otimes\Omega^1_C.
\end{align*}
We will check that 
$\nabla^{\dag}\colon ({\mathcal F}^1_{\mathfrak{sl},x})^{\vee}\otimes\Omega^1_C
\longrightarrow ({\mathcal F}^0_{\mathfrak{sl},x})^{\vee}\otimes\Omega^1_C$
is indeed defined.
Take $u\in({\mathcal F}^1_{\mathfrak{sl},x})^{\vee}\otimes\Omega^1_C$
and $u'\in {\mathcal F}^0_{\mathfrak{sl},x}$.
Then there are
homomorphisms 
$\beta^{(i)}_{j,k}\colon L^{(i)}_{j,k}\longrightarrow L^{(i)}_{j,k}$
of ${\mathcal O}_{m_it_i}$-modules
and elements
$\gamma^{(i)}_{j,k}\in\mathbb{C}[w^{(i)}_j]/((w^{(i)}_j)^{m_ir^{(i)}_j-r^{(i)}_j+1})$
satisfying the commutative diagrams
\[
 \begin{CD}
  V^{(i)}_{j,k} @> u^{(i)}_j >> V^{(i)}_{j,k} \\
  @V\pi^{(i)}_{j,k}|_{V^{(i)}_{j,k}} VV  @VV \pi^{(i)}_{j,k}|_{V^{(i)}_{j,k}} V \\
  L^{(i)}_{j,k} @> \beta^{(i)}_{j,k} >> L^{(i)}_{j,k}
 \end{CD}
 \hspace{80pt}
 \begin{CD}
  V^{(i)}_{j,k} @> u'^{(i)}_j >> V^{(i)}_{j,k} \\
  @V\pi^{(i)}_{j,k}|_{V^{(i)}_{j,k}} VV  @VV \pi^{(i)}_{j,k}|_{V^{(i)}_{j,k}}V \\
  L^{(i)}_{j,k} @> \gamma^{(i)}_{j,k} >> L^{(i)}_{j,k},
 \end{CD}
\]
where $u^{(i)}_j \colon V^{(i)}_{j,0}\longrightarrow V^{(i)}_{j,0}$
and $u'^{(i)}_j\colon V^{(i)}_{j,0}\longrightarrow V^{(i)}_{j,0}$
are homomorphisms induced by $u|_{m_it_i}$ and $u'|_{m_it_i}$, respectively.
Then we have
\begin{align*}
 &
 \pi^{(i)}_{j,k}((u'^{(i)}_j\nabla^{(i)}_j-\nabla^{(i)}_j u'^{(i)}_j) u^{(i)}_j|_{V^{(i)}_{j,k}})
 \\
 &=
 \left( \gamma^{(i)}_{j,k} \big( \nu^{(i)}_j(w^{(i)}_j)+kdz_i/r^{(i)}_jz_i \big)
 -\big( \nu^{(i)}_j(w^{(i)}_j)+kdz_i/r^{(i)}_jz_i \big) \gamma^{(i)}_{j,k} \right)
 \beta^{(i)}_{j,k}\pi^{(i)}_{j,k}=0
\end{align*}
for $k=0,\ldots,r^{(i)}_j-1$.
So we can see that $\Tr((u'^{(i)}_j\nabla^{(i)}_j-\nabla^{(i)}_j u'^{(i)}_j) u^{(i)}_j)=0$ and
\begin{align*}
 \Tr( u'^{(i)}_j (\nabla^{(i)}_j u^{(i)}_j - u^{(i)}_j \nabla^{(i)}_j ) ) &=
 \Tr( u'^{(i)}_j (\nabla^{(i)}_j u^{(i)}_j - u^{(i)}_j \nabla^{(i)}_j ))
 -\Tr((u'^{(i)}_j\nabla^{(i)}_j-\nabla^{(i)}_j u'^{(i)}_j) u^{(i)}_j) \\
 &=-\Tr( (u'^{(i)}_j u^{(i)}_j) \nabla^{(i)}_j ) + \Tr( \nabla^{(i)}_j (u'^{(i)}_j u^{(i)}_j) )=0
\end{align*}
for any $i,j$. Thus we have $\Tr(u'\circ(\nabla u-u\nabla))\in\Omega^1_C$,
$\nabla u-u\nabla\in({\mathcal F}^0_{\mathfrak{sl},x})^{\vee}\otimes\Omega^1_C$
and the morphism
$\nabla^{\dag}\colon ({\mathcal F}^1_{\mathfrak{sl},x})^{\vee}\otimes\Omega^1_C(D)
\longrightarrow ({\mathcal F}^0_{\mathfrak{sl},x})^{\vee}\otimes\Omega^1_C$
can be defined certainly.

If we take
$u\in \ker( H^0(({\mathcal F}^1_{\mathfrak{sl},x})^{\vee}\otimes\Omega^1_C)
 \xrightarrow{-\nabla^{\dag}} 
 H^0(({\mathcal F}^0_{\mathfrak{sl},x})^{\vee}\otimes\Omega^1_C)$,
then $u:E\longrightarrow E$ is a homomorphism satisfying
$\nabla\circ u=(u\otimes\mathrm{id})\circ\nabla$ and
$u|_{m_it_i}(l^{(i)}_j)\subset l^{(i)}_j$.
Since $(E,\nabla,\{l^{(i)}_j\})$ is $\balpha$-stable,
we have $u=c\cdot\mathrm{id}$ for some $c\in A/\mathfrak{m}$.
So $\Tr(u)=0$ implies $c=0$ and $u=0$.
Thus
\[
 \ker( H^0(({\mathcal F}^1_{\mathfrak{sl},x})^{\vee}\otimes\Omega^1_C)
 \xrightarrow{-\nabla_{{\mathcal F}^{\bullet}_{\mathfrak{sl},x}}} 
 H^0(({\mathcal F}^0_{\mathfrak{sl},x})^{\vee}\otimes\Omega^1_C))=0
\]
and $\mathbf{H}^2({\mathcal F}^{\bullet}_{\mathfrak{sl},x})=0$.
In particular, we have
$\omega(E,\nabla,\{l^{(i)}_j,V^{(i)}_{j,k},L^{(i)}_{j,k},\pi^{(i)}_{j,k},\phi^{(i)}_{j,k}\})=0$,
which means that a parabolic connection
$(E,\nabla,\{l^{(i)}_j,V^{(i)}_{j,k},L^{(i)}_{j,k},\pi^{(i)}_{j,k},\phi^{(i)}_{j,k}\})$
of generic ramified type over $A/I$
can be lifted to an $A$-valued point of
${\mathcal M}^{\balpha}_{C,D}((r^{(i)}_j),a)_{\tilde{\bnu}}$.
Hence $\det$ is a smooth morphism.
\end{proof}

For the proof of Theorem \ref{thm:generic-smooth},
it is sufficient to prove the following proposition.

\begin{proposition}
 For any point $x\in M^{\balpha}_{C,D}((r^{(i)}_j),a)_{\bnu}(\mathbb{C})$,
 the dimension of the tangent space
 $T_{M^{\balpha}_{C,D}((r^{(i)}_j),a)_{\bnu}}(x)$ 
 of $M^{\balpha}_{C,D}((r^{(i)}_j),a)_{\bnu}$ at $x$ is
\[
 2r^2(g-1)+2+\sum_{i=1}^n(r^2-r)m_i.
\]
\end{proposition}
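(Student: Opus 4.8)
The plan is to identify the tangent space $\Theta_{M^{\balpha}_{C,D}((r^{(i)}_j),a)_{\bnu}}(x)$ with the first hypercohomology of a two-term complex of locally free sheaves on $C$ and then to evaluate its dimension by an Euler characteristic computation. First I would introduce the complex $\mathcal{F}^{\bullet}\colon \mathcal{F}^0\xrightarrow{\nabla_{\mathcal F}}\mathcal{F}^1$ defined exactly as the complex $\bar{\mathcal F}^{\bullet}_0$ appearing in the proof of Proposition \ref{prop:det-smooth}, except that the trace-zero conditions $\Tr(u)=0$, $\Tr(v)=0$ are dropped, since the fiber over $\bnu$ fixes the exponent but allows the determinant to vary. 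Thus $\mathcal{F}^0$ consists of the $u\in\mathcal{End}(E)$ preserving every $l^{(i)}_j$ and inducing endomorphisms of each $V^{(i)}_{j,k}$ compatible with $\pi^{(i)}_{j,k}$, and $\mathcal{F}^1$ consists of the $v\in\mathcal{End}(E)\otimes\Omega^1_C(D)$ preserving the same data with $(\pi^{(i)}_{j,k}\otimes\mathrm{id})(v^{(i)}_j(V^{(i)}_{j,k}))=0$, the last condition encoding the fixing of $\bnu$. By the same Čech deformation argument as in the proof of Proposition \ref{prop:det-smooth} (now without the determinant constraint), the first-order deformations of $x$ inside $M^{\balpha}_{C,D}((r^{(i)}_j),a)_{\bnu}$ are classified by $\mathbf{H}^1(\mathcal{F}^{\bullet})$, so it suffices to compute $\dim\mathbf{H}^1(\mathcal{F}^{\bullet})$.

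Next I would determine $\mathbf{H}^0$ and $\mathbf{H}^2$. An element of $\mathbf{H}^0(\mathcal{F}^{\bullet})$ is a global endomorphism of $E$ commuting with $\nabla$ and preserving all the parabolic data; by $\balpha$-stability such endomorphisms are scalar, so $\dim\mathbf{H}^0(\mathcal{F}^{\bullet})=1$. For $\mathbf{H}^2$, the hypercohomology spectral sequence together with Serre duality, exactly as in Proposition \ref{prop:det-smooth}, yields
\[
 \mathbf{H}^2(\mathcal{F}^{\bullet})\cong\ker\bigl(H^0((\mathcal{F}^1)^{\vee}\otimes\Omega^1_C)\xrightarrow{-\nabla^{\dag}}H^0((\mathcal{F}^0)^{\vee}\otimes\Omega^1_C)\bigr)^{\vee};
\]
this kernel again consists of flat parabolic endomorphisms, hence of scalars, the only difference from the traceless case being that scalars are no longer excluded, so $\dim\mathbf{H}^2(\mathcal{F}^{\bullet})=1$. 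Consequently
\[
 \dim\mathbf{H}^1(\mathcal{F}^{\bullet})=\dim\mathbf{H}^0(\mathcal{F}^{\bullet})+\dim\mathbf{H}^2(\mathcal{F}^{\bullet})-\chi(\mathcal{F}^{\bullet})=2-\chi(\mathcal{F}^{\bullet}).
\]

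It then remains to compute $\chi(\mathcal{F}^{\bullet})=\chi(\mathcal{F}^0)-\chi(\mathcal{F}^1)$. Since $\mathcal{F}^0$ and $\mathcal{F}^1$ agree with $\mathcal{End}(E)$ and $\mathcal{End}(E)\otimes\Omega^1_C(D)$ away from the $t_i$ and differ from them only by finite length at the $t_i$, both are locally free of rank $r^2$, and Riemann--Roch reduces the problem to degrees, $\chi(\mathcal{F}^0)-\chi(\mathcal{F}^1)=\deg\mathcal{F}^0-\deg\mathcal{F}^1$. Writing $\ell^0_i$ and $\ell^1_i$ for the lengths of $\mathcal{End}(E)/\mathcal{F}^0$ and $(\mathcal{End}(E)\otimes\Omega^1_C(D))/\mathcal{F}^1$ at $t_i$, one has $\deg\mathcal{F}^0=-\sum_i\ell^0_i$ and $\deg\mathcal{F}^1=r^2(2g-2+\sum_i m_i)-\sum_i\ell^1_i$, so that
\[
 \dim\mathbf{H}^1(\mathcal{F}^{\bullet})=2+r^2(2g-2)+\sum_{i=1}^n\bigl(r^2 m_i-\ell^1_i+\ell^0_i\bigr).
\]
The whole statement thus comes down to the purely local identity $\ell^1_i-\ell^0_i=r m_i$ at each $t_i$, which I expect to be the main obstacle. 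Establishing it requires choosing a local basis of $E$ adapted to the filtrations $\{l^{(i)}_j\}$ and $\{V^{(i)}_{j,k}\}$ and, using the genericity of $\nu^{(i)}_{j,k}$ (the non-vanishing of the $\frac{w^{(i)}_j\,dw^{(i)}_j}{(w^{(i)}_j)^{m_ir^{(i)}_j-r^{(i)}_j+1}}$-coefficient, as exploited in the proof of Proposition \ref{prop:generic-2}), explicitly describing the stalks of $\mathcal{F}^0$ and $\mathcal{F}^1$ and counting the two lengths. Granting this identity one has $r^2 m_i-\ell^1_i+\ell^0_i=(r^2-r)m_i$, and the dimension becomes $2r^2(g-1)+2+\sum_{i=1}^n(r^2-r)m_i$, as claimed.
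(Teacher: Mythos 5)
Your strategy coincides with the paper's own proof of this proposition: the complex $\mathcal{F}^{\bullet}$ you introduce is exactly the paper's ${\mathcal F}^{\bullet}_x$ (the complex $\bar{\mathcal F}^{\bullet}_0$ of Proposition \ref{prop:det-smooth} with the trace conditions removed), the identification of $\Theta_{M^{\balpha}_{C,D}((r^{(i)}_j),a)_{\bnu}}(x)$ with $\mathbf{H}^1({\mathcal F}^{\bullet}_x)$ by a \v{C}ech computation is the paper's, and your evaluation $\dim\mathbf{H}^1=2+\chi({\mathcal F}^1_x)-\chi({\mathcal F}^0_x)$, using $\balpha$-stability for $\mathbf{H}^0\cong\mathbb{C}$ and Serre duality plus stability for $\mathbf{H}^2\cong\mathbb{C}$, is the paper's as well (there it appears as the eight-term exact sequence with $\mathbb{C}$ at both ends). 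Your further reduction of the statement to the purely local identity $\ell^1_i-\ell^0_i=rm_i$ at each $t_i$ is also correct, and that identity is true.

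However, that identity is precisely what you never prove: you write ``Granting this identity'' and give only a plan. This is a genuine gap, not a dispensable detail. The identity is the entire quantitative content of the proposition; it is the only step where the ramified structure $(V^{(i)}_{j,k},L^{(i)}_{j,k},\pi^{(i)}_{j,k})$ actually enters and the only source of the coefficient $(r^2-r)m_i$ --- everything before it would yield the same shape of formula for any parabolic-type moduli problem. The paper closes this step by computing the two degrees separately:
\[
 \deg{\mathcal F}^1_x=r^2(2g-2)+\sum_{i=1}^n\sum_{j'<j}m_ir^{(i)}_{j'}r^{(i)}_j
 +\sum_{i=1}^n\sum_{j=0}^{s_i-1}\sum_{k=0}^{r^{(i)}_j-1}(r^{(i)}_j-1-k),
\]
\[
 \deg{\mathcal F}^0_x=-\sum_{i=1}^n\sum_{j'\leq j}m_ir^{(i)}_{j'}r^{(i)}_j
 +\sum_{i=1}^n\sum_{j=0}^{s_i-1}m_ir^{(i)}_j
 +\sum_{i=1}^n\sum_{j=0}^{s_i-1}\sum_{k=0}^{r^{(i)}_j-1}(r^{(i)}_j-1-k).
\]
The local inputs behind these are: for ${\mathcal F}^1_x$, the condition $(\bar{\pi}^{(i)}_{j,k}\otimes\mathrm{id})(v^{(i)}_j(V^{(i)}_{j,k}))=0$ forces each diagonal block to land in $\ker(\pi^{(i)}_{j,k}|_{V^{(i)}_{j,k}})$, whose length is $r^{(i)}_j-1-k$ (since $\length V^{(i)}_{j,k}=m_ir^{(i)}_j-k$ while $\length L^{(i)}_{j,k}=m_ir^{(i)}_j-r^{(i)}_j+1$), so a diagonal block contributes $\sum_k(r^{(i)}_j-1-k)$ instead of $m_i(r^{(i)}_j)^2$; for ${\mathcal F}^0_x$ a diagonal block contributes $m_ir^{(i)}_j+\sum_k(r^{(i)}_j-1-k)$; the off-diagonal blocks give the triangular sums $\sum_{j'<j}$ and $-\sum_{j'\leq j}$ respectively. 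In the difference the $\sum_k(r^{(i)}_j-1-k)$ terms cancel, and using $\sum_{j'<j}r^{(i)}_{j'}r^{(i)}_j+\sum_{j'\leq j}r^{(i)}_{j'}r^{(i)}_j=r^2$ one gets $\ell^1_i-\ell^0_i=rm_i$. Note also that this bookkeeping needs no genericity of $\bnu$ beyond what Definition \ref{def:connection-generic-ramified} already encodes (surjectivity of $\pi^{(i)}_{j,k}|_{V^{(i)}_{j,k}}$ onto a free rank-one $\mathbb{C}[w^{(i)}_j]/((w^{(i)}_j)^{m_ir^{(i)}_j-r^{(i)}_j+1})$-module), so the appeal to Proposition \ref{prop:generic-2} in your sketch is not where the difficulty lies; the difficulty is carrying out the length count itself, and until you do, your argument establishes only the shape $2+r^2(2g-2)+\sum_{i=1}^n(r^2m_i-\ell^1_i+\ell^0_i)$ rather than the claimed value.
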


\begin{proof}
Let $(E,\nabla,\{l^{(i)}_j,V^{(i)}_{j,k},L^{(i)}_{j,k},\pi^{(i)}_{j,k},\phi^{(i)}_{j,k}\})$
be the parabolic connection of generic ramified type with the exponent $\bnu$
which corresponds to the point $x$.
Recall the complex ${\mathcal F}^{\bullet}_{M'}$
defined in (\ref {equation: definition of tangent complex})
and consider its restriction
${\mathcal F}^{\bullet}_x
={\mathcal F}^{\bullet}_{M'}\big|_{C\times x}$,
where we denote a point of $M'$ lying over $x$ by the same symbol.
We will prove that the tangent space
$T_{M^{\balpha}_{C,D}((r^{(i)}_j),a)_{\bnu}}(x)$ of $M^{\balpha}_{C,D}((r^{(i)}_j),a)_{\bnu}$ at $x$
is isomorphic to the hyper cohomology
$\mathbf{H}^1({\mathcal F}^{\bullet}_x)$.

Take a tangent vector
$v\in T_{M^{\balpha}_{C,D}((r^{(i)}_j),a)_{\bnu}}(x)$
which corresponds to a member
\[
 (E^v,\nabla^v,\{(l^v)^{(i)}_j,(V^v)^{(i)}_{j,k},(L^v)^{(i)}_{j,k},(\pi^v)^{(i)}_{j,k},(\phi^v)^{(i)}_{j,k}\})
 \in M^{\balpha}_{C,D}((r^{(i)}_j),a)(\mathbb{C}[\epsilon])
\]
such that
\[
 (E^v,\nabla^v,\{(l^v)^{(i)}_j,(V^v)^{(i)}_{j,k},(L^v)^{(i)}_{j,k},(\pi^v)^{(i)}_{j,k},(\phi^v)^{(i)}_{j,k}\})
 \otimes\mathbb{C}[\epsilon]/(\epsilon)
 \cong(E,\nabla,\{l^{(i)}_j,V^{(i)}_{j,k},L^{(i)}_{j,k},\pi^{(i)}_{j,k},\phi^{(i)}_{j,k}\}).
\]
We take an affine open covering
$\{U_{\alpha}\}$ of $C$ as in the proof of Proposition \ref{prop:det-smooth}.
Take a lift
\[
 \varphi_{\alpha}\colon E^v|_{U_{\alpha}\otimes\mathbb{C}[\epsilon]}
 \xrightarrow{\sim} E\otimes\mathbb{C}[\epsilon]|_{U_{\alpha}\otimes\mathbb{C}[\epsilon]}
\]
of the given isomorphism
$E^v\otimes\mathbb{C}[\epsilon]/(\epsilon)|_{U_{\alpha}}
\xrightarrow{\sim} E|_{U_{\alpha}}$
such that the restriction
$\varphi_{\alpha}|_{m_it_i\otimes\mathbb{C}[\epsilon]}$ sends the data
$\{(l^v)^{(i)}_j,(V^v)^{(i)}_{j,k},(L^v)^{(i)}_{j,k},(\pi^v)^{(i)}_{j,k},(\phi^v)^{(i)}_{j,k}\}$
to $\{l^{(i)}_j,V^{(i)}_{j,k},L^{(i)}_{j,k},\pi^{(i)}_{j,k},\phi^{(i)}_{j,k}\}\otimes\mathbb{C}[\epsilon]$
if $t_i\in U_{\alpha}$.
We put
\begin{align*}
 u_{\alpha\beta} &:=
 \varphi_{\alpha}\circ\varphi_{\beta}^{-1}-\mathrm{id}_{E|_{U_{\alpha\beta}\otimes\mathbb{C}[\epsilon]}}, \\
 v_{\alpha} &:= (\varphi_{\alpha}\otimes\mathrm{id})\circ\nabla^v\circ\varphi_{\alpha}^{-1}
 -\nabla|_{U_{\alpha}}\otimes\mathrm{id}_{\mathbb{C}[\epsilon]}.
\end{align*}
Then we have
$\{u_{\alpha\beta}\}\in C^1(\{U_{\alpha}\},(\epsilon)\otimes{\mathcal F}^0_x)$,
$\{v_{\alpha}\}\in C^0(\{U_{\alpha}\},(\epsilon)\otimes{\mathcal F}^1_x)$ and
\[
 d\{u_{\alpha\beta}\}=\{ u_{\beta\gamma}-u_{\alpha\gamma}+u_{\alpha\beta}\}=0, \quad
 \nabla_{{\mathcal F}^{\bullet}_x}\{u_{\alpha\beta}\}=\{v_{\beta}-v_{\alpha}\}=d\{v_{\alpha}\}.
\]
So $[(\{u_{\alpha\beta}\},\{v_{\alpha}\})]$ gives an element
$\Phi(v)$ of $\mathbf{H}^1({\mathcal F}^{\bullet}_x)$.
We can check that the correspondence
$v\mapsto \Phi(v)$ gives an isomorphism
\[
 \Phi \colon T_{M^{\balpha}_{C,D}((r^{(i)}_j),a)_{\bnu}}(x)
 \xrightarrow{\sim} \mathbf{H}^1({\mathcal F}^{\bullet}_x).
\]
From the hyper cohomology spectral sequence
$H^q({\mathcal F}^p_x)\Rightarrow \mathbf{H}^{p+q}({\mathcal F}^{\bullet}_x)$,
we obtain an exact sequence
\[
 0\longrightarrow \mathbb{C} \longrightarrow H^0({\mathcal F}^0_x) \longrightarrow
 H^0({\mathcal F}^1_x) \longrightarrow \mathbf{H}^1({\mathcal F}_x^{\bullet})
 \longrightarrow H^1({\mathcal F}^0_x) \longrightarrow H^1({\mathcal F}^1_x) \longrightarrow
 \mathbb{C} \longrightarrow 0.
\]
So we have
\begin{align*}
 \dim \mathbf{H}^1({\mathcal F}^{\bullet}_x)
 &= \dim H^0({\mathcal F}^1_x) + \dim H^1({\mathcal F}^0_x)
 -\dim H^0({\mathcal F}^0_x) - \dim H^1({\mathcal F}^1_x) + 2\dim_{\mathbb{C}}\mathbb{C} \\
 &= \chi({\mathcal F}^1_x)-\chi({\mathcal F}^0_x) +2.
\end{align*}
We can calculate $\chi({\mathcal F}^1_x)$ and $\chi({\mathcal F}^0_x)$
by its definition as follows:
\begin{align*}
 \chi({\mathcal F}^1_x) 
 &= 
 (1-g)\rank{\mathcal F}^1_x +\deg({\mathcal F}^1_x) \\
 &= r^2(1-g) +r^2(2g-2) +\sum_{i=1}^n\sum_{j'<j} m_i r^{(i)}_{j'} r^{(i)}_j 
 +\sum_{i=1}^n\sum_{j=0}^{s_i-1}\sum_{k=0}^{r^{(i)}_j-1}(r^{(i)}_j-1-k) \\
 \chi({\mathcal F}^0_x) 
 &= 
 (1-g) \rank{\mathcal F}^0_x+ \deg({\mathcal F}^0_x) \\
 &= r^2(1-g) - \sum_{i=1}^n\sum_{j'\leq j} m_i r^{(i)}_{j'}r^{(i)}_j 
 +\sum_{i=1}^n\sum_{j=0}^{s_i-1}(m_ir^{(i)}_j-r^{(i)}_j+1+\sum_{k=0}^{r^{(i)}_j-1}1
 +\sum_{k=0}^{r^{(i)}_j-1}(r^{(i)}_j-1-k)) \\
 &= r^2(1-g) - \sum_{i=1}^n\sum_{j'\leq j} m_i r^{(i)}_{j'}r^{(i)}_j + \sum_{i=1}^n\sum_{j=0}^{s_i-1}m_ir^{(i)}_j
  +\sum_{i=1}^n\sum_{j=0}^{s_i-1}\sum_{k=0}^{r^{(i)}_j-1}(r^{(i)}_j-1-k).
\end{align*}
So we have
\begin{align*}
 \dim\mathbf{H}^1({\mathcal F}^{\bullet}_x)
 &=
 \chi({\mathcal F}^1_x)-\chi({\mathcal F}^0_x)+2 \\
 &=2r^2(g-1)+2+\sum_{i=1}^n\sum_{j'<j} 2m_i r^{(i)}_{j'}r^{(i)}_j
 +\sum_{i=1}^n\sum_{j=0}^{s_i-1} m_i (r^{(i)}_j)^2
 -\sum_{i=1}^n\sum_{j=0}^{s_i-1}m_ir^{(i)}_j \\
 &=2r^2(g-1)+2+\sum_{i=1}^nm_ir(r-1)
\end{align*}
and the result follows. 
\end{proof}


\section{Symplectic form on the moduli space of parabolic connections of generic ramified type
for generic exponent}\label{section:symplectic-form}

We will give in this section a symplectic form on the moduli space of
$\balpha$-stable parabolic connections of generic ramified type
when the exponent $\bnu$ is generic.
For this we enlarge the moduli space
to the moduli space of simple
parabolic connections of generic ramified type, which is a non-separated algebraic space.

\begin{definition}\label{def:simple-connection}\rm
We define a functor
${\mathcal M}^{\rm spl}_{C,D}((r^{(i)}_j),a) \colon
(\sch/N((r^{(i)}_j),a))^o\longrightarrow (\sets)$ by
\[
 {\mathcal M}^{\rm spl}_{C,D}((r^{(i)}_j),a)(S)
 :=
 \left\{ (E,\nabla,\{l^{(i)}_j,V^{(i)}_{j,k},L^{(i)}_{j,k},\pi^{(i)}_{j,k},\phi^{(i)}_{j,k}\}) \right\}/\sim
\]
for a noetherian scheme $S$ over $\mathbb{C}$, where
$(E,\nabla,\{l^{(i)}_j,V^{(i)}_{j,k},L^{(i)}_{j,k},\pi^{(i)}_{j,k},\phi^{(i)}_{j,k}\})$
is the same as in Definition \ref {def:moduli-functor} 
except for replacing the condition (v) by
\begin{itemize}
\item[(v')]
for any geometric point $s$ of $S$,
the equality
$\mathrm{End}((E,\nabla,\{l^{(i)}_j\})|_{C\times s})
=\mathbb{C}\mathrm{id}_E$ holds,
where we define
$\mathrm{End}((E,\nabla,\{l^{(i)}_j\})|_{C\times s})$
as the set of the endomorphisms 
$u\colon E\otimes k(s)\longrightarrow E\otimes k(s)$
satisfying
$(\nabla\otimes k(s))\circ u=(u\otimes 1)\circ(\nabla\otimes k(s))$
and
$u|_{m_it_i\times s}(l^{(i)}_j\otimes k(s))\subset l^{(i)}_j\otimes k(s)$ for any $i,j$.
\end{itemize}
Here the relation $\sim$ is the same as that in Definition \ref{def:moduli-functor}.
\end{definition}

The next proposition is much easier than the proof of Theorem \ref{thm-existence-moduli-generic}.

\begin{proposition}\label{prop:algebraic-space-moduli}
The \'etale sheafification of ${\mathcal M}^{\rm spl}_{C,D}((r^{(i)}_j),a)$ is represented by
an algebraic space $M^{\rm spl}_{C,D}((r^{(i)}_j),a)$, locally of finite type over $N((r^{(i)}_j),a)$.
Moreover, $M^{\rm spl}_{C,D}((r^{(i)}_j),a)$ is smooth over $N((r^{(i)}_j),a)$ 
whose fiber has dimension
$2r^2(g-1)+2+\sum_{i=1}^n(r^2-r)m_i$ if it is non-empty.
\end{proposition}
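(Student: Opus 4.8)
The plan is to treat the two assertions separately, deriving the smoothness and the dimension from the deformation theory already developed in Sections \ref{section:definition-and-construction} and \ref{section:smoothness-and-dimension}, and reserving the genuinely new work for the representability by an algebraic space. For the representability I would first note that condition (7) of Definition \ref{def:simple-connection} says precisely that every geometric member has endomorphism ring $\mathbb{C}$, hence automorphism group the scalars $\mathbb{G}_m$. Repeating the construction in the proof of Theorem \ref{thm-existence-moduli-generic}, but replacing the stability condition that was used to cut out $X$ by the open simpleness condition, I obtain \'etale-locally a parameter scheme $R$, locally of finite type over $\Spec\mathbb{C}$, built from the same flag-scheme and Quot-scheme data carrying a universal tuple $(E,\nabla,\{l^{(i)}_j,V^{(i)}_{j,k},L^{(i)}_{j,k},\pi^{(i)}_{j,k},\phi^{(i)}_{j,k}\})$ together with a choice of framing, and on which the framing group $PGL$ acts. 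Since every member is simple the $PGL$-stabilizers are trivial, so the action is free, and the equivalence $\sim$ with the twisting line bundle $P$ in Definition \ref{def:simple-connection} is exactly what absorbs the residual scalar automorphisms. Consequently the \'etale sheafification has representable diagonal and is the quotient of $R$ by a free action of a smooth affine group scheme, which is an algebraic space locally of finite type; this is $M^{\rm spl}_{C,D}((r^{(i)}_j),a)_{\bnu}$. This step is the ``easier than Theorem \ref{thm-existence-moduli-generic}'' part precisely because no GIT-stability or boundedness is needed.

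For the smoothness I would reuse the determinant morphism exactly as in Proposition \ref{prop:det-smooth}. The morphism
\[
 \det\colon M^{\rm spl}_{C,D}((r^{(i)}_j),a)_{\bnu}\longrightarrow M_{C,D}(1,a)
\]
lands in the fiber over the point of $N((1),a)$ determined by $\bnu$, and its target is smooth because $M_{C,D}(1,a)$ is an affine-space bundle over the Jacobian of $C$. The obstruction theory for $\det$ is governed by the traceless complex $\bar{\mathcal F}^{\bullet}_0$, and the only place where stability entered the proof of Proposition \ref{prop:det-smooth} was in concluding that a horizontal endomorphism preserving the parabolic data is scalar; this conclusion is now furnished directly by simpleness. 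Hence that argument applies verbatim, giving $\mathbf{H}^2(\bar{\mathcal F}^{\bullet}_0\otimes I)=0$ and the smoothness of $\det$. As a smooth morphism over a smooth base has smooth source, $M^{\rm spl}_{C,D}((r^{(i)}_j),a)_{\bnu}$ is smooth.

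Finally, for the dimension I would identify the Zariski tangent space at a point $x$ with $\mathbf{H}^1({\mathcal F}^{\bullet}_x)$ by the same \v{C}ech cocycle computation as in the proof accompanying Theorem \ref{thm:generic-smooth}: a deformation over $\mathbb{C}[\epsilon]$ of the tuple $(E,\nabla,\{l^{(i)}_j,V^{(i)}_{j,k},L^{(i)}_{j,k},\pi^{(i)}_{j,k},\phi^{(i)}_{j,k}\})$ again produces a pair $(\{u_{\alpha\beta}\},\{v_{\alpha}\})$ representing a hypercohomology class, and this correspondence is an isomorphism. The Euler characteristic count already carried out there yields
\[
 \dim\mathbf{H}^1({\mathcal F}^{\bullet}_x)=2r^2(g-1)+2+\sum_{i=1}^n(r^2-r)m_i,
\]
which by smoothness equals the local dimension. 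The hard part will be the representability in the first paragraph: one must verify carefully that simpleness is an open condition in flat families and that the $P$-twist genuinely rigidifies the $\mathbb{G}_m$-gerbe, so that the quotient of the free $PGL$-action is an algebraic space rather than merely an Artin stack. Everything else is a transcription of what was established in the stable case.
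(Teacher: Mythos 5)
Your proposal follows essentially the same route as the paper's own proof: a parameter scheme assembled from Quot-scheme and flag-scheme data carrying a free $PGL_N$-action (freeness coming from simpleness), whose quotient is taken as an algebraic space locally of finite type, with smoothness and dimension obtained by rerunning the determinant-morphism argument of Proposition \ref{prop:det-smooth} and the hypercohomology tangent-space computation of Theorem \ref{thm:generic-smooth}, simpleness substituting for stability in the endomorphism step. The only presentational difference is that the paper makes the exhaustion by $m$-regular loci explicit, writing $M^{\rm spl}_{C,D}((r^{(i)}_j),a)_{\bnu}=\bigcup_{m\geq 0}M^{{\rm spl},m}_{C,D}((r^{(i)}_j),a)_{\bnu}$, which is precisely why the space is only locally of finite type --- a point your proposal acknowledges implicitly.
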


\begin{proof}
For each integer $m$, we consider the subfunctor 
${\mathcal M}^{\rm spl,m}_{C,D}((r^{(i)}_j),a)\subset
{\mathcal M}^{\rm spl}_{C,D}((r^{(i)}_j),a)$
such that
$(E,\nabla,\{l^{(i)}_j,V^{(i)}_{j,k},L^{(i)}_{j,k},\pi^{(i)}_{j,k},\phi^{(i)}_{j,k}\})\in
{\mathcal M}^{\rm spl}_{C,D}((r^{(i)}_j),a)_{\bnu}(S)$
lies in ${\mathcal M}^{\rm spl,m}_{C,D}((r^{(i)}_j),a)(S)$
if $E|_{C\times s}$ is $m$-regular for any $s\in S$.
If we put $N:=h^0(E(m))$, then all $m$-regular vector bundles of rank $r$ and degree $a$
can be parametrized by a Zariski open set
$Q$ of the Quot-scheme $\Quot_{{\mathcal O}_C^{\oplus N}(-m)}$.
Take a universal quotient bundle
${\mathcal O}_{C\times Q}^{\oplus N}(-m)\longrightarrow \tilde{E}$.
We take the ${\mathcal O}_C$-bimodule $\Lambda^1_D$ given in
\cite{IIS-1}, section 5.1.
Then we can construct a quasi-projective scheme $R$ over $Q$
such that there is a functorial isomorphism
$R(S)\cong\Hom(\Lambda^1_D\otimes\tilde{E}_S,\tilde{E}_S)$
for noetherian schemes $S$ over $R$.
The universal family over $R$ corresponds to
$(\tilde{\phi}\colon\tilde{E}_R\longrightarrow\tilde{E}_R,
\tilde{\nabla}\colon\tilde{E}_R\longrightarrow\tilde{E}_R\otimes\Omega^1_C(D))$.
If $R'$ is the subscheme of $R$ where $\tilde{\phi}$ becomes identity,
then $\tilde{\nabla}_{R'}\colon\tilde{E}_{R'}\longrightarrow\tilde{E}_{R'}\otimes\Omega^1_C(D)$
becomes a relative connection.
We can construct a locally closed subscheme $F_{R'}$
of a flag scheme over $R'$
parameterizing the parabolic structures
$(l^{(i)}_j)$ on $\tilde{E}_{R'}$ preserved by
$\nabla|_{(m_it_i)_{R'}}$.

By the same argument as in the proof of
Theorem \ref {thm-existence-moduli-generic}
we can construct a quasi-projective scheme $U$ over $F_{R'}$
parameterizing $\nu^{(i)}_j(w^{(i)}_j)$-ramified structures on
$(\tilde{l}^{(i)}_j/\tilde{l}^{(i)}_{j+1},\tilde{\nabla}^{(i)}_j)_{i,j}$,
where $(\tilde{l}^{(i)}_j)$ is a universal family of parabolic structure
on $\tilde{E}_{F_{R'}}$ and
$\tilde{\nabla}^{(i)}_j \colon  \tilde{l}^{(i)}_j/\tilde{l}^{(i)}_{j+1}
\longrightarrow (\tilde{l}^{(i)}_j/\tilde{l}^{(i)}_{j+1} \otimes\Omega^1_C(D)$
is the homomorphism induced by $\tilde{\nabla}$.
There is a canonical morphism
$U\longrightarrow {\mathcal M}^{\rm spl,m}_{C,D}((r^{(i)}_j),a)$
which is formally smooth by construction.
The action of $PGL_N(\mathbb{C})$ on $\Quot_{{\mathcal O}_C^{\oplus N}(-m)}$
canonically lifts to a free action on $U$.
We can see that the quotient $M^{\rm spl,m}_{C,D}((r^{(i)}_j),a):=U/PGL_N(\mathbb{C})$
as an algebraic space represents the \'etale sheafification of
${\mathcal M}^{\rm spl,m}_{C,D}((r^{(i)}_j),a)$.
There are canonical inclusions
$M^{\rm spl,m}_{C,D}((r^{(i)}_j),a)\hookrightarrow M^{\rm spl,m+1}_{C,D}((r^{(i)}_j),a)$
and
\[
 M^{\rm spl}_{C,D}((r^{(i)}_j),a):=
 \bigcup_{m\geq 0}M^{{\rm spl},m}_{C,D}((r^{(i)}_j),a)
\]
gives the desired moduli space.
Smoothness and the dimension counting follow from the same argument as that of the proof of
Theorem \ref{thm:generic-smooth}.
\end{proof}

We can take a quasi-finite \'etale covering
$M'_{\bnu}\longrightarrow M^{\rm spl}_{C,D}((r^{(i)}_j),a)_{\bnu}$
such that
there is a universal family
$(E_{M'_{\bnu}},\nabla_{M'_{\bnu}},\{l^{(i)}_{M'_{\bnu},j},V^{(i)}_{M'_{\bnu},j,k},
L^{(i)}_{M'_{\bnu},j,k},\pi^{(i)}_{M'_{\bnu},j,k},\phi^{(i)}_{M'_{\bnu},j,k}\})$
over $M'_{\bnu}$.
Define the complex ${\mathcal F}^{\bullet}_{M'_{\bnu}}$ in the same way as
the definition of ${\mathcal F}^{\bullet}_{M'}$
in (\ref {equation: definition of tangent complex}).
We can see that the tangent bundle
$T_{M'_{\bnu}}$
is isomorphic to $\mathbf{R}^1(\pi_{M'_{\bnu}})_*({\mathcal F}^{\bullet}_{M'_{\bnu}})$,
where
$\pi_{M'_{\bnu}} \colon C\times M'_{\bnu}\longrightarrow M'_{\bnu}$
is the second projection.
We consider the pairing
\begin{gather*}
 \omega_{M'_{\bnu}} \colon
 \mathbf{R}^1(\pi_{M'_{\bnu}})_*({\mathcal F}^{\bullet}_{M'_{\bnu}})
 \times
 \mathbf{R}^1(\pi_{M'_{\bnu}})_*({\mathcal F}^{\bullet}_{M'_{\bnu}})
 \longrightarrow
 \mathbf{R}^2(\pi_{M'_{\bnu}})_*(\Omega^{\bullet}_{C\times M'_{\bnu}/M'_{\bnu}})
 \cong{\mathcal O}_{M'_{\bnu}} \\
 ([\{u_{\alpha\beta}\}, \{v_{\alpha}\}], [\{u'_{\alpha\beta}\}, \{v'_{\alpha}\}])
 \mapsto [(\{\Tr(u_{\alpha\beta}\circ u'_{\beta\gamma})\},
 -\{\Tr(u_{\alpha\beta}\circ v'_{\beta}-v_{\alpha}\circ u'_{\alpha\beta})\}]
\end{gather*}
We can easily see that $\omega_{M'_{\bnu}}$ descends to a pairing
\begin{equation} \label {equation: symplectic form}
 \omega_{M^{\rm spl}_{C,D}((r^{(i)}_j),a)_{\bnu}}\colon
 T_{M^{\rm spl}_{C,D}((r^{(i)}_j),a)_{\bnu}} \times
 T_{M^{\rm spl}_{C,D}((r^{(i)}_j),a)_{\bnu}}
 \longrightarrow {\mathcal O}_{M^{\rm spl}_{C,D}((r^{(i)}_j),a)_{\bnu}}.
\end{equation}
Take a tangent vector
$v\in T_{M^{\rm spl}_{C,D}((r^{(i)}_j),a)_{\bnu}}(x)$
corresponding to a $\mathbb{C}[\epsilon]$-valued point
\[
 (E^v,\nabla^v,\{(l^v)^{(i)}_j,(V^v)^{(i)}_{j,k},(L^v)^{(i)}_{j,k},(\pi^v)^{(i)}_{j,k},(\phi^v)^{(i)}_{j,k}\})
\]
of $M^{\rm spl}_{C,D}((r^{(i)}_j),a)_{\bnu}$.
We can see by an easy calculation that
$\omega_U(v,v)\in\mathbf{H}^2(\Omega_C^{\bullet})\cong
\mathbf{H}^2({\mathcal F}^{\bullet}_U\otimes k(x))$
is nothing but the obstruction class for the lifting of
$(E^v,\nabla^v,\{(l^v)^{(i)}_j,(V^v)^{(i)}_{j,k},(L^v)^{(i)}_{j,k},(\pi^v)^{(i)}_{j,k},(\phi^v)^{(i)}_{j,k}\})$
to a $\mathbb{C}[t]/(t^3)$-valued point of
$M^{\rm spl}_{C,D}((r^{(i)}_j),a)_{\bnu}$.
Since $M^{\rm spl}_{C,D}((r^{(i)}_j),a)_{\bnu}$ is smooth,
we have $\omega_U(v,v)=0$.
So $\omega_U$ is skew-symmetric and we can regard that
$\omega_{M^{\rm spl}_{C,D}((r^{(i)}_j),a)_{\bnu}}\in H^0(\Omega^2_{M^{\rm spl}_{C,D}((r^{(i)}_j),a)_{\bnu}})$.

\begin{theorem} \label {thm:symplectic-generic-ramified}
Take $\bnu=(\nu^{(i)}_j(w^{(i)}_j))\in N((r^{(i)}_j),a)$
and write
\[
 \nu^{(i)}_j(w^{(i)}_j)=\sum_{l=0}^{m_ir^{(i)}_j-r^{(i)}_j} c^{(i)}_{j,l}\, (w^{(i)}_j)^l \frac{dz_i}{z_i^{m_i}}
 \quad (c^{(i)}_{j,l}\in\mathbb{C}).
\]
We assume that $c^{(i)}_{j,1}\neq 0$ for any $i,j$.
Then the $2$-form $\omega_{M^{\rm spl}_{C,D}((r^{(i)}_j),a)_{\bnu}}$ 
defined in (\ref {equation: symplectic form}) is symplectic,
that is, $d\omega_{M^{\rm spl}_{C,D}((r^{(i)}_j),a)_{\bnu}}=0$ and
$\omega_{M^{\rm spl}_{C,D}((r^{(i)}_j),a)_{\bnu}}$
is non-degenerate.
\end{theorem}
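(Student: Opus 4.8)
The plan is to treat the two assertions by different routes: non-degeneracy will follow from a self-duality of the deformation complex combined with Serre duality on $C$, while the $d$-closedness will be obtained by degenerating $M^{\rm spl}_{C,D}((r^{(i)}_j),a)_{\bnu}$ to the already-understood moduli spaces of unramified and regular singular connections. Throughout I use that the tangent space is $\mathbf{H}^1({\mathcal F}^{\bullet}_x)$ and that $\omega$ is induced by the trace cup-product landing in $\mathbf{H}^2(\Omega^{\bullet}_C)\cong\mathbb{C}$; skew-symmetry has already been checked in the text via the vanishing of the cubic obstruction, so only closedness and non-degeneracy remain.

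For non-degeneracy, I would first observe that the local conditions at the poles defining ${\mathcal F}^0_x$ and ${\mathcal F}^1_x$ are interchanged by the trace pairing. The computation in the proof of Proposition~\ref{prop:det-smooth} already produces canonical identifications of $({\mathcal F}^1_x)^{\vee}\otimes\Omega^1_C$ and $({\mathcal F}^0_x)^{\vee}\otimes\Omega^1_C$ with the parabolic endomorphism sheaves, and exhibits $\nabla^{\dag}$ as the transpose of $\nabla_{{\mathcal F}^{\bullet}_x}$; the genericity hypothesis $c^{(i)}_{j,1}\neq 0$ is precisely what makes these local pairings perfect. Consequently there is a quasi-isomorphism ${\mathcal F}^{\bullet}_x\xrightarrow{\sim}\mathcal{H}om({\mathcal F}^{\bullet}_x,\Omega^1_C)[1]$, and Serre duality converts the cup-product into a perfect pairing on $\mathbf{H}^1({\mathcal F}^{\bullet}_x)$. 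Since $\mathbf{H}^0({\mathcal F}^{\bullet}_x)=\mathbb{C}$ and $\mathbf{H}^2({\mathcal F}^{\bullet}_x)=0$ by simplicity (as in the proofs of Proposition~\ref{prop:det-smooth} and Proposition~\ref{prop:algebraic-space-moduli}), the only surviving pairing is the perfect self-pairing on $\mathbf{H}^1$, and non-degeneracy holds at every point.

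For $d$-closedness I would construct a smooth family $\psi\colon\mathcal{M}\to T$ over a smooth connected curve $T$, by the same Quot-scheme and flag-scheme construction as in Proposition~\ref{prop:algebraic-space-moduli}, whose fibers are moduli of simple parabolic connections with varying formal type: over a dense open $T^{\circ}\subset T$ the fibers are moduli of unramified irregular singular connections (as in \cite{Inaba-Saito}) and of regular singular connections (as in \cite{IIS-1}, \cite{Inaba-1}), obtained from the ramified type by a confluence separating the Galois-conjugate leading eigenvalues, while one distinguished fiber is $M^{\rm spl}_{C,D}((r^{(i)}_j),a)_{\bnu}$. Because all of these moduli spaces share the dimension $2r^2(g-1)+2+\sum_{i=1}^n(r^2-r)m_i$, the total space $\mathcal{M}$ is smooth and $\psi$ is smooth of that relative dimension. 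The identical trace cup-product construction yields a relative two-form $\tilde{\omega}\in H^0(\Omega^2_{\mathcal{M}/T})$ restricting on each fiber to that fiber's two-form, in particular to $\omega_{M^{\rm spl}_{C,D}((r^{(i)}_j),a)_{\bnu}}$ on the distinguished fiber. Then $d_{\mathcal{M}/T}\tilde{\omega}\in H^0(\Omega^3_{\mathcal{M}/T})$ restricts fiberwise to the exterior derivative of the fiber's form, so by \cite{Inaba-Saito}, \cite{IIS-1} and \cite{Inaba-1} it vanishes on $\psi^{-1}(T^{\circ})$, which is dense in $\mathcal{M}$ since $\psi$ is flat and $T$ is irreducible; hence $d_{\mathcal{M}/T}\tilde{\omega}=0$ identically, giving $d\,\omega_{M^{\rm spl}_{C,D}((r^{(i)}_j),a)_{\bnu}}=0$.

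The hard part will be the construction of $\psi$: making precise the confluence of a generic ramified formal type into unramified and regular singular types while keeping the moduli functor representable and the relative two-form well defined. The jump of the Hukuhara-Turrittin data across the special fiber is the delicate point, and it is here that the paraphrase of the formal data by the tuples $(\pi^{(i)}_{j,k},\phi^{(i)}_{j,k})$, together with the genericity $c^{(i)}_{j,1}\neq 0$, is indispensable so that the local data extend flatly over $T$. Once $\psi$ and $\tilde{\omega}$ are in hand, non-degeneracy could also be propagated from the dense locus $T^{\circ}$, but the self-duality argument above already yields it directly and cleanly at every point.
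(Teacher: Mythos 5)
Your treatment of $d$-closedness is, in outline, the paper's own: a degeneration whose general fibers are moduli of unramified irregular or regular singular connections, a relative $2$-form restricting to the fiberwise forms, and density to kill $d_{\mathcal{M}/T}\tilde{\omega}$. But be aware that the "hard part" you defer is essentially the entire content of the paper's proof: the author builds a \emph{two}-parameter family over $T'\to T=\Spec\mathbb{C}[t]\times H$ (with $A^{(i)}_{j,k}$, the variables $W^{(i)}_j$ with $(W^{(i)}_j)^{r^{(i)}_j}=t^{r^{(i)}_j}+z_i$, the genericity condition $(\dag)$, and a representability argument as in Proposition~\ref{prop:algebraic-space-moduli}), identifies the fibers over $h\neq 0$ with regular singular moduli and those over $h=0$, $t\neq 0$ with unramified moduli, and only then argues by density. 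So this half of your proposal is the right strategy but is not yet a proof.

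The genuine gap is in your non-degeneracy argument. First, the quasi-isomorphism ${\mathcal F}^{\bullet}_x\xrightarrow{\sim}\mathcal{H}om({\mathcal F}^{\bullet}_x,\Omega^1_C)[1]$ is asserted, not proved, and it is \emph{not} what Proposition~\ref{prop:det-smooth} provides: the computation there shows that $({\mathcal F}^1_x)^{\vee}\otimes\Omega^1_C$ is the sheaf of endomorphisms preserving the $l^{(i)}_j$ and $V^{(i)}_{j,k}$ with \emph{no} compatibility with $\pi^{(i)}_{j,k}$ imposed, hence strictly larger than ${\mathcal F}^0_x$ whenever some $m_i>1$ (and dually for the other term). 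The trace pairing therefore gives only an inclusion of ${\mathcal F}^{\bullet}_x$ into the dual complex; its cokernel is a two-term complex of torsion sheaves supported on $D$, both of length $\sum_{i,j}(m_i-1)r^{(i)}_j(r^{(i)}_j-1)$, and your claim amounts to the statement that the map induced by $u\mapsto\nabla u-u\nabla$ between these torsion quotients is an isomorphism. That is a nontrivial local computation at the ramified points (of the same flavor as Proposition~\ref{prop:generic-2}), it is exactly where $c^{(i)}_{j,1}\neq 0$ must be used, and it is nowhere carried out — indeed the paper's author circumvents precisely this computation by the degeneration argument. Second, your side claim $\mathbf{H}^2({\mathcal F}^{\bullet}_x)=0$ is false: the exact sequence in the dimension count shows $\mathbf{H}^2({\mathcal F}^{\bullet}_x)\cong\mathbb{C}$ (dual to the scalars in $\mathbf{H}^0$); vanishing holds only for the trace-free complex $\bar{\mathcal F}^{\bullet}_0$. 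Third, your fallback — propagating non-degeneracy from the dense locus $T^{\circ}$ — does not work over a one-parameter base: non-degeneracy is an open, not a closed, condition, so it does not pass to a special fiber by density. This is exactly why the paper's base is a surface: the ramified fiber sits over the codimension-two locus $\{t=h=0\}\subset T'$, so the degeneracy locus of $\det(\omega_{M^{\rm spl}_{\tilde{\bnu}}})$, being a divisor contained in a codimension-two subset of the normal space $M^{\rm spl}_{\tilde{\bnu}}$, must be empty. With a one-dimensional base the degeneracy divisor could be the special fiber itself, and the argument collapses. So as it stands, neither of your two routes to non-degeneracy is complete.
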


\begin{proof}
We take a regular parameter $z_i$ of ${\mathcal O}_{C,t_i}$
as a function on an affine open neighborhood $U_i$ of $t_i$.
We may assume that $t_j\notin U_i$ for $j\neq i$.
We choose 
\[
 (\kappa_{i,0},\ldots,\kappa_{i,m_i-1})\in\mathbb{C}^{m_i}
\]
satisfying $\kappa_{i,0}=0$ and $\kappa_{i,q}\neq\kappa_{i,q'}$
for $q\neq q'$.
If we take a canonical parameter $h$ of $\Spec\mathbb{C}[h]$,
then
$z_i-h\kappa_{i,q}$ becomes a function on $U_i\times\Spec\mathbb{C}[h]$
and we can consider its zero scheme 
\[
 D_{i,q}\subset C\times\Spec\mathbb{C}[h].
\]
If we take some affine open neighborhood $H$ of $0$ in $\Spec\mathbb{C}[h]$,
we may assume that the fibers $\{(D_{i,q})_h|1\leq i\leq n,0\leq q\leq m_i-1\}$
are distinct $\sum_{i=1}^nm_i$ points for $h\neq 0$.
We set 
\begin{align*}
 \tilde{D}_i
 &:=
 \sum_{q=0}^{m_i-1}(D_{i,q})_H
 \\
 \tilde{D}
 &:=
 \sum_{i=1}^n\tilde{D}_i,
\end{align*}
where $(D_{i,q})_H$ means the base change of $D_{i,q}$ by $H\hookrightarrow\Spec\mathbb{C}[h]$.
If we write $a_{i,q}:=h\kappa_{i,q}$ and $z_{i,q}:=z_i-a_{i,q}$,
then $z_{i,q}$ becomes a local defining equation of $(D_{i,q})_H$.
By construction, we have $(D_{i,0})_H=t_i\times H$ for any $i$.
We put
$T:=\Spec\mathbb{C}[t]\times H$.
Set
\begin{align*}
 A^{(i)}_j
 &:=
 {\mathcal O}_{\tilde{D}_i\times T}[W^{(i)}_j] \big/
 \big( (W^{(i)}_j)^{r^{(i)}_j}-t^{r^{(i)}_j}-z_i \big)
 \\
 A^{(i)}_{j,k} &:=A^{(i)}_j\Big/\Big( z_{i,1}\cdots z_{i,m_i-1}\big(W^{(i)}_j-\zeta_{r^{(i)}_j}^k t\big) \Big),
\end{align*}
where $\zeta_{r^{(i)}_j}$ is a primitive $r^{(i)}_j$-th root of unity.
Note that
\[
 \prod_{l=0}^{r^{(i)}_j-1}(W^{(i)}_j-t\zeta_{r^{(i)}_j}^l)
 =(W^{(i)}_j)^{r^{(i)}_j}-t^{r^{(i)}_j}=z_i
\]
and
$A^{(i)}_j\otimes k(x_0)\cong\mathbb{C}[w^{(i)}_j]/((w^{(i)}_j)^{m_ir^{(i)}_j})$,
for the closed point $x_0$ of $T$ corresponding to
$h=0$ and $t=0$.
We put
\[
 \tilde{\nu}^{(i)}_j(W^{(i)}_j)
 =
 \sum_{l=0}^{m_ir^{(i)}_j-r^{(i)}_j} c^{(i)}_{j,l} \, (W^{(i)}_j)^l
 \dfrac{d z_i}{z_{i,0}z_{i,1}\cdots z_{i,m_i-1}}
 \in A^{(i)}_j \otimes\Omega^1_{C\times T/T}(\tilde{D}_T).
\]
If we take some quasi-finite dominant morphism $T'\longrightarrow T$ with $T'$ normal,
we may assume that there are $b^{(j)}_{i,0},\ldots,b^{(j)}_{i,m_i-1}\in{\mathcal O}_{T'}$
satisfying $(b^{(j)}_{i,q})^{r^{(i)}_j}=t^{r^{(i)}_j}+a_{i,q}$.
Replacing $T'$ by its shrinking, we may assume that
\begin{equation} \label {equation: genericity condition of exponent}
 \sum_{l=1}^{m_ir^{(i)}_j-r^{(i)}_j}
 c^{(i)}_{j,l} \cdot (\zeta_{r^{(i)}_j}^{lk}-\zeta_{r^{(i)}_j}^{lk'}) (b^{(j)}_{i,q}(u))^{l-1} \neq 0
\end{equation}
for any $u\in T'$ and $0\leq k<k'\leq r^{(i)}_j-1$.

We define a moduli functor
${\mathcal M}^{\rm spl}_{\tilde{\bnu},T'} \colon (\sch/T')^o\longrightarrow(\sets)$
from the category of noetherian schemes $(\sch/T')$ over $T'$ to the category of sets by
\[
 {\mathcal M}^{\rm spl}_{\tilde{\bnu},T'}(S)=
 \left\{ (E,\nabla,\{ l^{(i)}_j,V^{(i)}_{j,k},L^{(i)}_{j,k},\pi^{(i)}_{j,k},\phi^{(i)}_{j,k}\}) \right\}/\sim,
\]
for a noetherian scheme $S$ over $T'$,
where
\begin{itemize}
\item[(i)] 
$E$ is a vector bundle on $C\times S$ of rank $r$ and $\deg(E|_{C\times s})=a$
for any $s\in S$,
\item[(ii)] 
$\nabla \colon E\longrightarrow E\otimes\Omega^1_{C\times S/S}(\tilde{D}\times_T S)$ 
is a relative connection,
\item[(iii)] 
$E|_{(\tilde{D}_i)_S}=l^{(i)}_0\supset l^{(i)}_1\supset\cdots
\supset l^{(i)}_{s_i-1}\supset l^{(i)}_{s_i}=0$
is a filtration by ${\mathcal O}_{(\tilde{D}_i)_S}$-submodules
such that 
$\nabla|_{(\tilde{D}_i)_S}(l^{(i)}_j)\subset l^{(i)}_j\otimes\Omega^1_{C\times S/S}(\tilde{D}\times_TS)$
for any $i,j$
and that each $l^{(i)}_j/l^{(i)}_{j+1}$ is a locally free
${\mathcal O}_{(\tilde{D}_i)_S}$-module of rank $r^{(i)}_j$,
\item[(iv)] 
$l^{(i)}_j/l^{(i)}_{j+1}=V^{(i)}_{j,0}\supset V^{(i)}_{j,1}
\supset\cdots\supset V^{(i)}_{j,r^{(i)}_j-1}\supset z_{i,0}V^{(i)}_{j,0}$
is a filtration by ${\mathcal O}_{(\tilde{D}_i)_S}$-submodules
such that $V^{(i)}_{j,r^{i)}_j-1}/z_{i,0}V^{(i)}_{j,0}$ and
$V^{(i)}_{j,k}/V^{(i)}_{j,k+1}$ for $k=0,\ldots, r^{(i)}_j-1$
are locally free ${\mathcal O}_{(D_{i,0})_S}$-modules of rank one and that
$\nabla^{(i)}_j(V^{(i)}_{j,k})\subset V^{(i)}_{j,k}\otimes\Omega^1_{C\times S/S}(\tilde{D}\times_TS)$
for any $k$, where
$\nabla^{(i)}_j \colon l^{(i)}_j/l^{(i)}_{j+1}\longrightarrow
l^{(i)}_j/l^{(i)}_{j+1}\otimes\Omega^1_{C\times S/S}(\tilde{D}\times_TS)$
is the homomorphism induced by $\nabla|_{(\tilde{D}_i)_S}$,
\item[(v)] 
$\pi^{(i)}_{j,k} \colon V^{(i)}_{j,k}\otimes A^{(i)}_{j,k}\longrightarrow L^{(i)}_{j,k}$
is a locally free quotient $A^{(i)}_{j,k}\otimes{\mathcal O}_S$-module of rank one
such that
$p^{(i)}_{j,k}:=\pi^{(i)}_{j,k}|_{V^{(i)}_{j,k}}\colon
V^{(i)}_{j,k}\longrightarrow L^{(i)}_{j,k}$ is surjective and the diagram
\[
 \begin{CD}
  V^{(i)}_{j,k}\otimes A^{(i)}_{j,k} @>\pi^{(i)}_{j,k}>> L^{(i)}_{j,k} \\
  @V\nabla^{(i)}_j\otimes \mathrm{id} VV
  @VV\tilde{\nu}^{(i)}_j(W^{(i)}_j)+\frac {k\, dz_i} {r^{(i)}_j z_{i,0}} V \\
  V^{(i)}_{j,k}\otimes\Omega^1_{C\times S/S}(\tilde{D}\times_TS)\otimes A^{(i)}_{j,k}
  @>\pi^{(i)}_{j,k}>> L^{(i)}_{j,k}\otimes\Omega^1_{C\times S/S}(\tilde{D}\times_TS)
 \end{CD}
\]
is commutative for $0\leq k\leq r^{(i)}_j-1$,
\item[(vi)]
$\phi^{(i)}_{j,k} \colon L^{(i)}_{j,k} \longrightarrow L^{(i)}_{j,k-1}$
is an $A^{(i)}_j\otimes{\mathcal O}_S$-homomorphism whose image is
$(W^{(i)}_j-\zeta_{r^{(i)}_j}^{k-1}t)L^{(i)}_{j,k-1}$ for $1\leq k\leq r^{(i)}_j-1$
and
$\phi^{(i)}_{j,r^{(i)}_j} \colon (z_{i,0})/(z_{i,0}^{m_i+1})\otimes L^{(i)}_{j,0}
 \longrightarrow L^{(i)}_{j,r^{(i)}_j-1}$
is an $A^{(i)}_j\otimes{\mathcal O}_S$-homomorphism whose image is
$(W^{(i)}_j-\zeta_{r^{(i)}_j}^{r^{(i)}_j-1}t)L^{(i)}_{j,r^{(i)}_j-1}$
such that the diagrams
\[
 \begin{CD}
  V^{(i)}_{j,k}\otimes A^{(i)}_j @>>> V^{(i)}_{j,k-1}\otimes A^{(i)}_j  \\
  @V\pi^{(i)}_{j,k} VV  @V V  \pi^{(i)}_{j,k-1} V \\
   L^{(i)}_{j,k} @>\phi^{(i)}_{j,k}>> L^{(i)}_{j,k-1}
 \end{CD}
\]
for $k=1,\ldots,r^{(i)}_j-1$ and the diagram
\[
 \begin{CD}
  (z_{i,0})/(z_{i,0}^2z_{i,1}\cdots z_{i,m_i-1})\otimes V^{(i)}_{j,0}\otimes A^{(i)}_j
  @> >> 
  V^{(i)}_{j,r^{(i)}_j-1}\otimes A^{(i)}_j
  \\
  @V\mathrm{id}\otimes\pi^{(i)}_{j,0} VV   @VV \pi^{(i)}_{j,r^{(i)}_j-1} V \\
  \big(z_{i,0}\big)\big/\big(z_{i,0}z_{i,1}\cdots z_{i,m_i-1}(W^{(i)}_j-t)\big)\otimes L^{(i)}_{j,0} 
  @>\phi^{(i)}_{j,r^{(i)}_j}>> L^{(i)}_{j,r^{(i)}_j-1}
 \end{CD}
\]
are commutative,
\item[(vii)]
$\phi^{(i)}_{j,k}\colon L^{(i)}_{j,k} \longrightarrow
(W^{(i)}_j-\zeta_{r^{(i)}_j}^{k-1}t) L^{(i)}_{j,k-1}$
factors through an $A^{(i)}_j\otimes{\mathcal O}_S$-isomorphisms
\[
 \psi^{(i)}_{j,k}\colon
 L^{(i)}_{j,k} \stackrel{\sim}\longrightarrow
 \big(W^{(i)}_j-\zeta_{r^{(i)}_j}^{k-1}t\big)\big/
 \Big( \big( W^{(i)}_j-\zeta_{r^{(i)}_j}^{k-1}t\big) \big( W^{(i)}_j-\zeta_{r^{(i)}_j}^k t\big)
 z_{i,1}\cdots z_{i,m_i-1}\Big)
 \otimes L^{(i)}_{j,k-1}
\]
for $1\leq k\leq r^{(i)}_j-1$
such that the composition
\begin{align*}
 &
 (z_{i,0})/\big( (W^{(i)}_j-t)z_{i,0}z_{i,1}\cdots z_{i,m_i-1} \big)\otimes L^{(i)}_{j,0}
 \xrightarrow{\phi^{(i)}_{r^{(i)}_j}}
 L^{(i)}_{j,r^{(i)}_j-1}
 \\
 &\xrightarrow[\sim]{\psi^{(i)}_{j,r^{(i)}_j-1}}
 \big(W^{(i)}_j-\zeta_{r^{(i)}_j}^{r^{(i)}_j-2}t\big)\big/
 \Big( \big( W^{(i)}_j-\zeta_{r^{(i)}_j}^{r^{(i)}_j-2}t\big) 
 \big( W^{(i)}_j-\zeta_{r^{(i)}_j}^{r^{(i)}_j-1} t\big)
 z_{i_1}\cdots z_{i_{m_i-1}}\Big)
 \otimes L^{(i)}_{j,r^{(i)}_j-2}
 \\
 &\xrightarrow[\sim]{\psi^{(i)}_{r^{(i)}_j-2}}\cdots\xrightarrow[\sim]{\psi^{(i)}_{j,1}}
 \Big(\prod_{k=0}^{ r^{(i)}_j-2}(W^{(i)}_j-\zeta_{r^{(i)}_j}^k t) \Big)
 \Big/ \big( z_{i,0}z_{i,1}\cdots z_{i,m_i-1}\big)
 \otimes L^{(i)}_{j,0}
\end{align*}
coincides with the homomorphism canonically induced by
\[
 (z_{i,0})/\big( (W^{(i)}_j-t)z_{i,0}z_{i,1}\cdots z_{i,m_i-1} \big)
 \longrightarrow
 \Big(\prod_{k\neq r^{(i)}_j-1}(W^{(i)}_j-\zeta_{r^{(i)}_j}^k t) \Big)
 \Big/ \big( z_{i,0}z_{i,1}\cdots z_{i,m_i-1}\big),
\]
\item[(viii)] for any point $s\in S$,
$(E,\nabla,\{l^{(i)}_j\})\otimes k(s)$ has only
scalar endomorphisms.
\end{itemize}
Here $(E,\nabla,\{l^{(i)}_j,V^{(i)}_{j,k},L^{(i)}_{j,k},\pi^{(i)}_{j,k},\phi^{(i)}_{j,k}\})\sim
(E',\nabla',\{l'^{(i)}_j,V'^{(i)}_{j,k},L'^{(i)}_{j,k},\pi'^{(i)}_{j,k},\phi'^{(i)}_{j,k}\})$
if there are a line bundle ${\mathcal L}$ on $S$ and isomorphisms
$\theta \colon E\xrightarrow{\sim}E'\otimes {\mathcal L}$,
$\vartheta^{(i)}_{j,k} \colon L^{(i)}_{j,k}\xrightarrow{\sim}L'^{(i)}_{j,k}\otimes {\mathcal L}$
satisfying $\nabla'\circ\theta=(\theta\otimes\mathrm{id})\circ\nabla$,
$\theta|_{\tilde{D}_i}(l^{(i)}_j)\subset l'^{(i)}_j\otimes {\mathcal L}$ for any $i,j$
and for the induced isomorphism
$\theta^{(i)}_j\colon l^{(i)}_j/l^{(i)}_{j+1}\xrightarrow{\sim}
l'^{(i)}_j/l'^{(i)}_{j+1}\otimes {\mathcal L}$,
$\pi'^{(i)}_{j,k}\circ\theta^{(i)}_j|_{V^{(i)}_{j,k}}
=\vartheta^{(i)}_{j,k}\circ \pi^{(i)}_{j,k}|_{V^{(i)}_{j,k}}$,
$(\phi'^{(i)}_{j,k}\otimes\mathrm{id}_{\mathcal L})\circ\vartheta^{(i)}_{j,k}
=\vartheta^{(i)}_{j,k-1}\circ\phi^{(i)}_{j,k}$
and
$(\phi'^{(i)}_{j,r^{(i)}_j}\otimes\mathrm{id}_{\mathcal L})
\circ(\mathrm{id}\otimes\vartheta^{(i)}_{j,0})
=\vartheta^{(i)}_{j,r^{(i)}_j-1}\circ\phi^{(i)}_{j,r^{(i)}_j}$.

We can see by the similar argument to that of
Proposition \ref{prop:algebraic-space-moduli}
that the \'etale sheafification of ${\mathcal M}^{\rm spl}_{\tilde{\bnu},T'}$
can be represented by an algebraic space
$M^{\rm spl}_{\tilde{\bnu},T'}$ locally of finite type over $T'$.
If we set
\[
 \lambda_i:=\sum_{j=0}^{s_i-1}\sum_{k=0}^{r^{(i)}_j-1}
 \left(\sum_{l'=0}^{m_i-1}
 c^{(i)}_{j,\,r^{(i)}_j l'} (z_i+t^{r^{(i)}_j})^{l'}\frac{dz_i}{z_{i,0}z_{i,1}\cdots z_{i,m_i-1}}
 +k\frac{dz_{i,0}}{r^{(i)}_j z_{i,0}}\right)
\]
and $\blambda:=(\lambda_i)_{1\leq i\leq n}$, then there is 
a moduli scheme
$M(1,\blambda)$ which represents the functor
\begin{align*}
 (\sch/T') & \longrightarrow (\sets) \\
 S & \mapsto
 \left\{
 (L,\nabla) \left|
 \begin{array}{l}
  \text{$L$ is a line bundle on $C\times S$} \\
  \text{$\nabla_L\colon L\longrightarrow
  L\otimes\Omega^1_{C\times S/S}(\tilde{D}_S)$
  is a relative connection} \\
  \text{and $\nabla_L|_{(\tilde{D}_i)_S}=(\lambda_i)_S$ for any $i$}
 \end{array}
 \right.\right\}.
\end{align*}
We can see that $M(1,\blambda)$ is an affine space bundle over the relative Jacobian
of $C\times T'$ over $T'$ and it is smooth over $T'$.
We can define a morphism
$\det\colon M^{\rm spl}_{\tilde{\bnu},T'}\longrightarrow M(1,\blambda)$
by $(E,\nabla)\mapsto \det(E,\nabla)$
and we can prove by the similar argument to that of Proposition \ref{prop:det-smooth}
that this is a smooth morphism.
So
$M^{\rm spl}_{\tilde{\bnu},T'}$
is smooth over $T'$.
We take a universal family $(\tilde{E},\tilde{\nabla},\{\tilde{l}^{(i)}_j,\tilde{V}^{(i)}_{j,k},\tilde{L}^{(i)}_{j,k},
\tilde{\pi}^{(i)}_{j,k},\tilde{\phi}^{(i)}_{j,k}\})$
over some quasi-finite \'etale covering $M'_{T'}$ of $M^{\rm spl}_{\tilde{\bnu},T'}$.

First we consider the fiber $(M^{\rm spl}_{\tilde{\bnu},T'})_x$ over a point $x\in T'$ satisfying $h\neq 0$
for the corresponding point of $T=\Spec\mathbb{C}[t]\times H$.
Take
$(E,\nabla,\{l^{(i)}_j,V^{(i)}_{j,k},L^{(i)}_{j,k},\pi^{(i)}_{j,k},\phi^{(i)}_{j,k}\})
\in (M^{\rm spl}_{\tilde{\bnu},T'})_x$.
Note that we have
\[
 A^{(i)}_{j,k}\otimes k(x)\cong
 {\mathcal O}_{(D_{i,0})_x}[W^{(i)}_j]/(W^{(i)}_j-\zeta_{r^{(i)}_j}^kt)\oplus
 \bigoplus_{q=1}^{m_i-1}{\mathcal O}_{(D_{i,q})_x}[W^{(i)}_j]\big/
 \big( (W^{(i)}_j)^{r^{(i)}_j}-(t^{r^{(i)}_j}+a_{iq})_x\big).
\]
Consider the case $q\neq 0$.
If $(t^{r^{(i)}_j}+a_{i,q})(x)\neq 0$,
we have a direct sum decomposition
\[
 V^{(i)}_{j,0}\otimes {\mathcal O}_{(\tilde{D}_i)_x}/(z_{i,q}(x))\cong
 \bigoplus_{k=0}^{r^{(i)}_j-1} L^{(i)}_{j,0}\otimes
 {\mathcal O}_{(D_{i,q})_x}[W^{(i)}_j]\big/\big(W^{(i)}_j-\zeta_{r^{(i)}_j}^kb^{(j)}_{i,q}(x)\big),
\]
since $(b^{(j)}_{i,q})^{r^{(i)}_j}=(t^{r^{(i)}_j}+a_{i,q})_x$.
Each component
$L^{(i)}_{j,0}\otimes{\mathcal O}_{(D_{i,q})_x}[W^{(i)}_j]\big/
\big( W^{(i)}_j-\zeta_{r^{(i)}_j}^kb^{(j)}_{i,q}(x)\big)$
is preserved by $\tilde{\nu}^{(i)}_j(W^{(i)}_j)$ and it is an eigen-space of $\nabla^{(i)}_j$.
From the definition of $T'$ given by (\ref  {equation: genericity condition of exponent}),
we can see that
\[
 \res_{z_{i,q}(x)}\left(\tilde{\nu}^{(i)}_j(W^{(i)}_j) \pmod{W^{(i)}_j-\zeta_{r^{(i)}_j}^kb^{(j)}_{i,q}(x)}\right)\neq
 \res_{z_{i,q}(x)}\left(\tilde{\nu}^{(i)}_j(W^{(i)}_j) \pmod{W^{(i)}_j-\zeta_{r^{(i)}_j}^{k'}b^{(j)}_{i,q}(x)}\right)
\]
for $0\leq k<k'\leq r^{(i)}_j-1$.
So the eigenvalues of $\nabla^{(i)}_j$ on
$V^{(i)}_{j,0}\otimes{\mathcal O}_{(\tilde{D}_i)_x}/(z_{i,q}(x))$
are mutually distinct for $q\neq 0$ and $b^{(j)}_{i,q}(x)\neq 0$.
For $q\neq 0$ with $(t^{r^{(i)}_j}+a_{i,q})(x)=0$,
we have
$b^{(j)}_{i,q}(x)=0$ and
\[
 V^{(i)}_{j,0}\otimes{\mathcal O}_{(D_{i,q})_x}\cong
 {\mathcal O}_{(D_{i,q})_x}[W^{(i)}_j]/((W^{(i)}_j)^{r^{(i)}_j})
\]
The linear map
$\res_{(D_{i,q})_x}\left(\nabla^{(i)}_j-c^{(i)}_{j,0}
\dfrac{dz_i} {z_{i,0}\cdots z_{i,m_i-1}}\right)$
on $V^{(i)}_{j,0}\otimes{\mathcal O}_{(D_{i,q})_x}$
is just the multiplication by
$\sum_{l=1}^{m_ir^{(i)}_j-r^{(i)}_j}c^{(i)}_{j,l}
\dfrac{dz_i} {z_{i,0}\cdots z_{i,m_i-1}}(W^{(i)}_j)^l$.
So we can write
\[
 \res_{(D_{i,q})_x}\left(
 \nabla^{(i)}_j|_{(D_{i,q})_x}-\frac{1}{r^{(i)}_j}\Tr(\nabla^{(i)}_j)|_{(D_{i,q})_x} \right)
 =c_1W^{(i)}_j+c_2(W^{(i)}_j)^2+\cdots+c_{r^{(i)}_j-1}(W^{(i)}_j)^{r^{(i)}_j-1}
 \]
with $c_1\neq 0$.
Note that ${\mathcal O}_{(D_{i,q})_x}[W^{(i)}_j]/((W^{(i)}_j)^{r^{(i)}_j})$-module structure
on $V^{(i)}_{j,0}\otimes{\mathcal O}_{(D_{i,q})_x}$
is equivalent to giving
the action of $c_1W^{(i)}_j+c_2(W^{(i)}_j)^2+\cdots+c_{r^{(i)}_j-1}(W^{(i)}_j)^{r^{(i)}_j-1}$
on $V^{(i)}_{j,0}\otimes{\mathcal O}_{(D_{i,q})_x}$
which is of generic nilpotent type
and is recovered by $\nabla^{(i)}_j$.

For $q=0$, we have
$L^{(i)}_{j,k}\otimes{\mathcal O}_{(D_{i,0})_x}[W^{(i)}_j]/(W^{(i)}_j-\zeta_{r^{(i)}_j}^kt)
\cong\mathbb{C}$. 
The commutative diagram
\[
 \begin{CD}
 V^{(i)}_{j,k}\otimes {\mathcal O}_{(D_{i,0})_x}
 @> \pi^{(i)}_{j,k} >>
 L^{(i)}_{j,k}\otimes{\mathcal O}_{(D_{i,0})_x}[W^{(i)}_j]/(W^{(i)}_j-\zeta_{r^{(i)}_j}^kt)
 \cong\mathbb{C} \\
 @V\nabla^{(i)}_j|_{(D_{i,0})_x} VV    
 @V V \tilde{\nu}^{(i)}_j(W^{(i)}_j)+\frac{k\, dz_i}{r^{(i)}_jz_i} V \\
 V^{(i)}_{j,k}\otimes {\mathcal O}_{(D_{i,0})_x}\otimes\Omega^1_C(D)
 @> \pi^{(i)}_{j,k}\otimes 1 >>
 L^{(i)}_{j,k}\otimes{\mathcal O}_{(D_{i,0})_x}[W^{(i)}_j]/(W^{(i)}_j-\zeta_{r^{(i)}_j}^kt)
 \otimes\Omega^1_C(D)
 \cong \mathbb{C}\cdot\dfrac{dz_i}{z_i}
 \end{CD}
\]
means that $(E,\nabla,\{V^{(i)}_{j,k},L^{(i)}_{j,k},\pi^{(i)}_{j,k}\})\otimes{\mathcal O}_{C,t_i}$
gives a local regular singular parabolic connection at $(D_{i,0})_x$ with the eigenvalues
$\res_{z_i}\left( \Big(\tilde{\nu}^{(i)}_j(W^{(i)}_j)+\frac{k\,dz_i}{r^{(i)}_j z_{i,0}} \Big)
\otimes{\mathcal O}_{(D_{i,0})_x}[W^{(i)}_j]/(W^{(i)}_j-\zeta_{r^{(i)}_j}^kt)\right)$.
Thus we can see that the fiber
$M^{\rm spl}_{\tilde{\nu},x}$ is nothing but the moduli space of
simple regular singular parabolic connections with the assumption
that the residue of the connection at $(D_{i,q})_x$ with $b_{i,q}(x)\neq 0$ and $q\neq 0$
is semi-simple of distinct eigenvalues,
the residue of the connection at $(D_{i,q})_x$ with $b_{i,q}(x)=0$ and $q\neq 0$
has a single eigenvalue $\beta$ and its minimal polynomial is
$(T-\beta)^{r^{(i)}_j}$
and regular singular parabolic structure is given by
$\big\{ V^{(i)}_{j,k}\otimes{\mathcal O}_{(D_{i,0})_x}\big\}_{0\leq k\leq r^{(i)}_j-1}$
at $(D_{i,0})_x$ .

Next we consider the fiber
$(M^{\rm spl}_{\tilde{\nu},T'})_y$ over a point $y\in T'$ satisfying
$h=0$ and $t\neq 0$ for the corresponding point of
$T=\Spec\mathbb{C}[t]\times H$.
Note that we have
\begin{align*}
 A^{(i)}_{j,k}\otimes k(y)
 &=
 {\mathcal O}_{(\tilde{D}_i)_y}[W^{(i)}_j]\big/\big( (W^{(i)}_j)^{r^{(i)}_j}-t^{r^{(i)}_j}-z \, , \:
 z^{m_i-1}(W^{(i)}_j-\zeta_{r^{(i)}_j}^kt) \big)
 \\
 &\cong
 \mathbb{C}[W^{(i)}_j]\big/\big(((W^{(i)}_j)^{r^{(i)}_j}-t^{r^{(i)}_j})^{m_i-1}(W^{(i)}_j-\zeta_{r^{(i)}_j}^kt)\big)
 \\
 &\cong
 \mathbb{C}[W^{(i)}_j]\big/\big((W^{(i)}_j-\zeta_{r^{(i)}_j}^kt)^{m_i}\big)
 \oplus
 \bigoplus_{k'\neq k} \mathbb{C}[W^{(i)}_j]\big/\big((W^{(i)}_j-\zeta_{r^{(i)}_j}^{k'}t)^{m_i-1}\big).
\end{align*}
We can see by comparing the length that the kernel of the surjection
\[
 \pi^{(i)}_{j,k}|_{V^{(i)}_{j,k}}:V^{(i)}_{j,k}\longrightarrow L^{(i)}_{j,k}
 \cong A^{(i)}_{j,k}\otimes k(y)
\]
coincides with the image of
$\ker\big( V^{(i)}_{j,r^{(i)}_j-1}\xrightarrow{\sim}L^{(i)}_{j,r^{(i)}_j-1}
\rightarrow L^{(i)}_{j,k} \big)$
via the inclusion
$V^{(i)}_{j,r^{(i)}_j-1}\hookrightarrow V^{(i)}_{j,k}$.
So we have
\[
 \ker \big( \pi^{(i)}_{j,k}\big|_{V^{(i)}_{j,k}} \big)
 \cong
 \bigoplus_{k'=k+1}^{r^{(i)}_j-1}\mathbb{C}[W^{(i)}_j]/(W^{(i)}_j-\zeta_{r^{(i)}_j}^{k'}t).
\]
Then we can see that
$\pi^{(i)}_{j,k}|_{V^{(i)}_{j,k}}$ induces the isomorphism
\[
 V^{(i)}_{j,0}\supset V^{(i)}_{j,k}\supset
 V^{(i)}_{j,k}\otimes\mathbb{C}[W^{(i)}_{j,k}]/((W^{(i)}_{j,k}-\zeta_{r^{(i)}_j}^kt)^{m_i})
 \xrightarrow{\sim}
 L^{(i)}_{j,k}\otimes\mathbb{C}[W^{(i)}_{j,k}]/((W^{(i)}_{j,k}-\zeta_{r^{(i)}_j}^k t)^{m_i})
\]
on the $(W^{(i)}_{j,k}-\zeta_{r^{(i)}_j}^k t)$-torsion parts.
So we get a direct sum decomposition
\[
 V^{(i)}_{j,0} \cong
 \bigoplus_{k=0}^{r^{(i)}_j-1} L^{(i)}_{j,k}\otimes\mathbb{C}[W^{(i)}_j]/((W^{(i)}_j-\zeta_{r^{(i)}_j}^kt)^{m_i}).
\]
On the component
$L^{(i)}_{j,k}\otimes\mathbb{C}[W^{(i)}_j]/((W^{(i)}_j-\zeta_{r^{(i)}_j}^kt)^{m_i})$,
the operation of $W^{(i)}_j$ is the same as the expansion
\begin{equation} \label {equation: r-th root of W}
 W^{(i)}_j=\zeta_{r^{(i)}_j}^k\left( t+\frac {z_i} {r^{(i)}_jt}+\cdots \right)
\end{equation}
in $z_i$,
because of the equalities
$(W^{(i)}_j)^{r^{(i)}_j}=t^{r^{(i)}_j}+z_i$
and $\big( W^{(i)}_j-\zeta_{r^{(i)}_j}^kt \big)^{m_i}=0$.
So the operation of $\tilde{\nabla}^{(i)}_j\otimes k(y)$
on $L^{(i)}_{j,k}\otimes\mathbb{C}[W^{(i)}_j]/((W^{(i)}_j-\zeta_{r^{(i)}_j}^kt)^{m_i})$
is the substitution of (\ref  {equation: r-th root of W}) in
$\tilde{\nu}^{(i)}_j(W^{(i)}_j)+k\, dz_i/r^{(i)}_jz_i$,
whose leading coefficient
is 
\[
 \sum_{l=0}^{m_ir^{(i)}_j-r^{(i)}_j} c^{(i)}_{j,l} \zeta_{r^{(i)}_j}^{lk}t^{l}.
\]
These are mutually distinct for $k=0,\ldots,r^{(i)}_j-1$,
because of the definition of $T'$ given by  (\ref  {equation: genericity condition of exponent}).
Thus the fiber $M^{\rm spl}_{\tilde{\nu},y}$ is nothing but the moduli space of
simple unramified irregular singular parabolic connections.

We define a complex ${\mathcal F}^{\bullet}_{M'_{T'}}$ by
\begin{align*}
 &{\mathcal F}^0_{M'_{T'}} =
 \left\{
 u\in{\mathcal End}(\tilde{E})
 \left|
 \begin{array}{l}
 \text{$u|_{\tilde{D}_{M'_{T'}}}(\tilde{l}^{(i)}_j)\subset\tilde{l}^{(i)}_j$ for any $i,j$
 and for the induced homomorphism} \\
 \text{$u^{(i)}_j:\tilde{l}^{(i)}_j/\tilde{l}^{(i)}_{j+1}\longrightarrow\tilde{l}^{(i)}_j/\tilde{l}^{(i)}_{j+1}$,
 $u^{(i)}_j(\tilde{V}^{(i)}_{j,k})\subset\tilde{V}^{(i)}_{j,k}$} \\
 \text{and $\tilde{\pi}^{(i)}_{j,k}\circ (u^{(i)}_j\otimes\mathrm{id})|_{\ker\pi^{(i)}_{j,k}}=0$
 for any $i,j,k$}
 \end{array}
 \right.
 \right\} \\
 &{\mathcal F}^1_{M'_{T'}} =
 \left\{
 v\in{\mathcal End}(\tilde{E})\otimes\Omega^1_{C\times M'_{T'}/M'_{T'}}((\tilde{D})_{M'_{T'}})
 \left|
 \begin{array}{l}
 \text{$v|_{\tilde{D}_{M'_{T'}}}(\tilde{l}^{(i)}_j)\subset
 \tilde{l}^{(i)}_j\otimes\Omega^1_{C\times M'_{T'}/M'_{T'}}((\tilde{D})_{M'_{T'}})$} \\
 \text{for any $i,j$ and for the induced homomorphism} \\
 \text{$v^{(i)}_j:\tilde{l}^{(i)}_j/\tilde{l}^{(i)}_{j+1}\longrightarrow
 \tilde{l}^{(i)}_j/\tilde{l}^{(i)}_{j+1}\otimes
 \Omega^1_{C\times M'_{T'}/M'_{T'}}((\tilde{D})_{M'_{T'}})$,} \\
 \text{$v^{(i)}_j(\tilde{V}^{(i)}_{j,k})\subset
 \tilde{V}^{(i)}_{j,k}\otimes\Omega^1_{C\times M'_{T'}/M'_{T'}}((\tilde{D})_{M'_{T'}})$} \\
 \text{and $(\tilde{\pi}^{(i)}_{j,k}\otimes\mathrm{id})\circ
 v^{(i)}_j|_{\tilde{V}^{(i)}_{j,k}}=0$ for any $i,j,k$}
 \end{array}
 \right.
 \right\} \\
 &\nabla_{{\mathcal F}^{\bullet}_{M'_{T'}}} \colon {\mathcal F}^0_{M'_{T'}}\ni u
 \mapsto \tilde{\nabla}u-u\tilde{\nabla}\in {\mathcal F}^1_{M'_{T'}} .
\end{align*}
Then we can see that the relative tangent bundle
$T_{M'_{T'}/T'}$ is isomorphic to
$\mathbf{R}^1(\pi_{M'_{T'}})_*({\mathcal F}^{\bullet}_{M'_{T'}})$.
If we define a pairing
\begin{gather*}
 \omega_{M'_{T'}} \colon \mathbf{R}^1(\pi_{M'_{T'}})_*({\mathcal F}^{\bullet}_{M'_{T'}})
 \times\mathbf{R}^1(\pi_{M'_{T'}})_*({\mathcal F}^{\bullet}_{M'_{T'}})
 \longrightarrow \mathbf{R}^2(\pi_{M'_{T'}})_*(\Omega^{\bullet}_{C\times M'_{T'}/M'_{T'}})
 \cong{\mathcal O}_{M'_{T'}} 
\end{gather*}
by setting
\[
 \omega_{M'_{T'}}
 \big( [(\{u_{\alpha\beta}\},\{v_{\alpha}\})] , [(\{u'_{\alpha\beta}\},\{v'_{\alpha}\})] \big)
 =
 \big[(\{\Tr(u_{\alpha\beta}\circ u'_{\beta\gamma})\},
 \{\Tr(u_{\alpha\beta}\circ v'_{\beta})-\Tr(v_{\alpha}\circ u'_{\alpha\beta})\}\big],
\]
then $\omega_{M'_{T'}}$ descends to a relative $2$-form
$\omega_{M^{\rm spl}_{\tilde{\bnu},T'}}\in
H^0(M^{\rm spl}_{\tilde{\bnu},T'},\Omega^2_{M^{\rm spl}_{\tilde{\bnu},T'}/T'})$.

The restriction of $\omega_{M^{\rm spl}_{\tilde{\bnu},T'}}$
to the fiber over the point of $T'$
corresponding to $t=0$ and $h=0$
is nothing but the $2$-form $\omega_{M^{\rm spl}_{C,D}((r^{(i)}_j),a)_{\bnu}}$
on the moduli space of parabolic connections of generic ramified type
with the exponent $\bnu$.
The restriction of $\omega_{M^{\rm spl}_{\tilde{\bnu},T'}}$
to the fiber $(M^{\rm spl}_{\tilde{\nu},T'})_x$
over a point $x$ corresponding to $h\neq 0$
coincides with the $2$-form on the moduli space
of regular singular parabolic connections defined in \cite{Inaba-1}, section 7.
By \cite[Proposition 7.2 and Proposition 7.3]{Inaba-1},
$\omega_{M^{\rm spl}_{\tilde{\bnu},T'}}\big|_{(M^{\rm spl}_{\tilde{\bnu},T'})_x}$ 
is non-degenerate and
$d\omega_{M^{\rm spl}_{\tilde{\bnu},T'}}\big|_{(M^{\rm spl}_{\tilde{\bnu},T'})_x}=0$.

Consider the restriction of $\omega_{M^{\rm spl}_{\tilde{\bnu}}}$ to the fiber over a point $y$ of $T'$
corresponding to $t\neq 0$ and $h=0$ is the $2$-form.
Then we can see that the restriction
$\omega_{M^{\rm spl}_{\tilde{\bnu},T'}}\big|_{(M^{\rm spl}_{\tilde{\bnu},T'})_y}$
coincides with the symplectic form
on the moduli space of  unramified irregular singular connections
given in \cite{Inaba-Saito}, section 4.
By \cite[Proposition 4.1]{Inaba-Saito}, 
$\omega_{M^{\rm spl}_{\tilde{\bnu},T'}}\big|_{(M^{\rm spl}_{\tilde{\bnu},T'})_y}$ 
is non-degenerate.
Since the induced homomorphism
\[
 \det(\omega_{M^{\rm spl}_{\tilde{\bnu}}}) \colon
 \det(T_{M^{\rm spl}_{\tilde{\bnu}}/T'})\longrightarrow
 \det(\Omega^1_{M^{\rm spl}_{\tilde{\bnu}}/T'})
\]
is isomorphic in codimension one, it is isomorphic on whole $M^{\rm spl}_{\tilde{\bnu}}$,
because $M^{\rm spl}_{\tilde{\bnu},T'}$ is normal.
Thus the relative $2$-form
$\omega_{M^{\rm spl}_{\tilde{\bnu},T'}}$ is non-degenerate.
The non-degeneracy and $d$-closedness of $\omega_{M^{\rm spl}_{\tilde{\bnu},T'}}$
imply the same properties for 
$\omega_{M^{\rm spl}_{C,D}((r^{(i)}_j),a)_{\bnu}}=
\omega_{M^{\rm spl}_{\tilde{\bnu},T'}}\big|_{M^{\rm spl}_{C,D}((r^{(i)}_j),a)_{\bnu}}$.
\end{proof}

Restriction of the symplectic form $\omega_{M^{\rm spl}_{C,D}((r^{(i)}_j),a)_{\bnu}}$ to the moduli space
$M^{\balpha}_{C,D}((r^{(i)}_j),a)_{\bnu}$ gives the following corollary.

\begin{corollary}
Take an exponent
$\bnu\in N((r^{(i)}_j),a)$ such that the
$\dfrac{w^{(i)}_j dw^{(i)}_j}{(w^{(i)}_j)^{m_ir^{(i)}_j-r^{(i)}_j+1}}$-coefficient of $\nu^{(i)}_{j,k}$
does not vanish for any $i,j$.
Then there is an algebraic symplectic form $\omega_{M^{\balpha}_{C,D}((r^{(i)}_j),a)_{\bnu}}$
on the moduli space
$M^{\balpha}_{C,D}((r^{(i)}_j),a)_{\bnu}$ of $\balpha$-stable parabolic connections
of generic ramified type with the exponent $\bnu$.
\end{corollary}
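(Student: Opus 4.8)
The plan is to deduce the corollary directly from Theorem~\ref{thm:symplectic-generic-ramified} by realizing $M^{\balpha}_{C,D}((r^{(i)}_j),a)_{\bnu}$ as an open subspace of the simple moduli space $M^{\rm spl}_{C,D}((r^{(i)}_j),a)_{\bnu}$ and restricting the symplectic form constructed there. First I would check that the genericity hypothesis of the corollary coincides with the hypothesis $c^{(i)}_{j,1}\neq 0$ of Theorem~\ref{thm:symplectic-generic-ramified}. Using $z_i=(w^{(i)}_j)^{r^{(i)}_j}$ one has
\[
 \frac{dz_i}{z_i^{m_i}}=r^{(i)}_j\,\frac{dw^{(i)}_j}{(w^{(i)}_j)^{m_ir^{(i)}_j-r^{(i)}_j+1}},
\]
so the $\frac{w^{(i)}_jdw^{(i)}_j}{(w^{(i)}_j)^{m_ir^{(i)}_j-r^{(i)}_j+1}}$-coefficient of $\nu^{(i)}_{j,0}$ equals $r^{(i)}_j c^{(i)}_{j,1}$. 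Moreover, since the shift $\nu^{(i)}_{j,k}-\nu^{(i)}_{j,k-1}=\frac{dw^{(i)}_j}{w^{(i)}_j}$ equals $\frac{(w^{(i)}_j)^{m_ir^{(i)}_j-r^{(i)}_j}dw^{(i)}_j}{(w^{(i)}_j)^{m_ir^{(i)}_j-r^{(i)}_j+1}}$, it alters only the top coefficient and leaves the $w^{(i)}_j$-coefficient untouched; hence the prescribed coefficient is independent of $k$ (and for $r^{(i)}_j=1$ there is only the index $k=0$). Thus the corollary's assumption is exactly $c^{(i)}_{j,1}\neq 0$ for all $i,j$, so Theorem~\ref{thm:symplectic-generic-ramified} applies.

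Next I would construct an open immersion $M^{\balpha}_{C,D}((r^{(i)}_j),a)_{\bnu}\hookrightarrow M^{\rm spl}_{C,D}((r^{(i)}_j),a)_{\bnu}$. The key point is that any $\balpha$-stable object admits only scalar endomorphisms preserving the connection and the parabolic structure, which is precisely condition~(7) in Definition~\ref{def:simple-connection}. Consequently, the fiber over $\bnu$ of the functor ${\mathcal M}^{\balpha}_{C,D}((r^{(i)}_j),a)$ is a subfunctor of ${\mathcal M}^{\rm spl}_{C,D}((r^{(i)}_j),a)_{\bnu}$, and since the equivalence relation $\sim$ is literally the same in both definitions, this inclusion is injective on equivalence classes. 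Because $\balpha$-stability is an open condition, the induced morphism of moduli spaces is a monomorphism; both sides are smooth of the same dimension $2r^2(g-1)+2+\sum_{i=1}^n(r^2-r)m_i$ by Theorem~\ref{thm:generic-smooth} and Proposition~\ref{prop:algebraic-space-moduli}, and at each geometric point the Zariski tangent space is identified with the same hypercohomology $\mathbf{H}^1({\mathcal F}^{\bullet})$ of the complex used in those proofs. An injective morphism that is an isomorphism on tangent spaces between smooth spaces of equal dimension is étale, hence an open immersion.

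Finally I would set $\omega_{M^{\balpha}_{C,D}((r^{(i)}_j),a)_{\bnu}}$ to be the pullback of $\omega_{M^{\rm spl}_{C,D}((r^{(i)}_j),a)_{\bnu}}$ along this open immersion. Since the pairing defining $\omega_{M^{\rm spl}}$ is built from the cup product and the trace on the very complex ${\mathcal F}^{\bullet}$ whose $\mathbf{H}^1$ computes the tangent space of $M^{\balpha}_{\bnu}$, the restricted form is exactly the canonical $2$-form attached to the same deformation-theoretic data. As $d$-closedness and non-degeneracy are open conditions preserved under restriction to an open subspace, Theorem~\ref{thm:symplectic-generic-ramified} at once yields that $\omega_{M^{\balpha}_{C,D}((r^{(i)}_j),a)_{\bnu}}$ is symplectic. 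The only point requiring care — and the mild obstacle here — is the bookkeeping needed to confirm that the open immersion genuinely identifies the tangent complexes and trace pairings on the two moduli spaces, so that the restricted form coincides with the naturally defined one; but because the defining complexes are the same, this is routine and no argument beyond Theorem~\ref{thm:symplectic-generic-ramified} is needed.
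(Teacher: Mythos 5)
Your proposal is correct and follows essentially the same route as the paper, whose entire proof is the one-line observation that restricting $\omega_{M^{\rm spl}_{C,D}((r^{(i)}_j),a)_{\bnu}}$ to $M^{\balpha}_{C,D}((r^{(i)}_j),a)_{\bnu}$ gives the corollary. You merely make explicit the routine verifications the paper leaves implicit — that the corollary's coefficient hypothesis equals $c^{(i)}_{j,1}\neq 0$ via $\frac{dz_i}{z_i^{m_i}}=r^{(i)}_j\frac{dw^{(i)}_j}{(w^{(i)}_j)^{m_ir^{(i)}_j-r^{(i)}_j+1}}$ (with $k$-independence since the shift $\frac{dw^{(i)}_j}{w^{(i)}_j}$ only touches the top coefficient), and that $\balpha$-stability forces condition (7) of Definition \ref{def:simple-connection}, so the stable locus sits as an open subspace of $M^{\rm spl}_{C,D}((r^{(i)}_j),a)_{\bnu}$ with the same tangent complex ${\mathcal F}^{\bullet}$ — all of which is sound.
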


\section*{Appendix}

In the earlier version of the preprint, the author missed the condition (v) of
Definition \ref {definition: ramified structure} in the formulation of
irregular singular connection of generic ramified type.
Indeed,  we cannot remove the condition because of the following example.

Consider the case $m=r=2$
and a connection
$(E,\nabla)$ with
$(\hat{E},\hat{\nabla})\cong
(\mathbb{C}[[w]],\nabla_{\nu(w)})$,
where $w=\sqrt{z}$, $\nu(w)=w\, dw/w^3$ and
\[
 \nabla_{\nu(w)} \colon
 \mathbb{C}[[w]] \ni f(w) \ \mapsto \ df(w)+\nu(w)f(w)
 \in \mathbb{C}[[w]]\otimes\frac{dz}{z^2}.
\]
Let $e_0,e_1$ be the basis of $\hat{E}$ corresponding to $1,w\in\mathbb{C}[[w]]$.
Then a true ramified structure
$(V_k,L_k,\pi_k,\phi_k)$, in the sense of Definition \ref {definition: ramified structure}, is given by
$V_k=(w^k)/(w^4)$,
$L_k=(w^k/w^{3+k})$ for $k=0,1$ and
\begin{align*}
 \pi_0 & \colon \
 V_0\otimes_{\mathbb{C}[z]/(z^2)}\mathbb{C}[w]/(w^3)
 \ \ni \ f_0(w)e_0+f_1(w)e_1 \ \mapsto \ f_0(w)+f_1(w)w \ \in \ \mathbb{C}[w]/(w^3)
 \\
 \pi_1 & \colon \
 V_1\otimes_{\mathbb{C}[z]/(z^2)}\mathbb{C}[w]/(w^3)
 \ \ni \ f_1(w)e_1+f_2(w)ze_0
 \ \mapsto \ f_1(w)w+f_2(w)z \ \in \ (w)/(w^4).
\end{align*}

However, if we lose the condition (v) of Definition 1.2,
we can construct the following unexpected example:
Again we put $V_k=(w^k)/(w^4)$ and $L_k=(w^k)/(w^{3+k})$.
We define
\begin{align*}
 \pi'_0 & \colon
 V_0\otimes_{\mathbb{C}[z]/(z^2)}\mathbb{C}[w]/(w^3)
 \longrightarrow\mathbb{C}[w]/(w^3)
 \\
 \pi'_1 & \colon
 V_1\otimes_{\mathbb{C}[z]/(z^2)}\mathbb{C}[w]/(w^3)
 \longrightarrow (w)/(w^4),
\end{align*}
by setting
\begin{align*}
 \pi'_0(f_0(w)e_0+f_1(w)e_1)
 &=
 f_0(w)+f_1(w)(w+w^2)
 \\
 \pi'_1(f_1(w)e_1+f_2(w)ze_0)
 &=
  f_1(w)(w+w^2)+f_2(w)(z+2zw).
\end{align*}
Furthermore,  we define the $\mathbb{C}[w]$-homomorphisms
\begin{align*}
 \phi'_1
 &\colon 
 (w)/(w^4)\longrightarrow \mathbb{C}[w]/(w^3)
 \\
 \phi'_2
 &\colon 
 (z)/(w^5)\longrightarrow (w)/(w^4)
\end{align*}
by setting
\begin{align*}
 \phi'_1(w)
 &=w
 \\
 \phi'_2(z)
 &=z+2wz,
\end{align*}
which would not satisfy the condition (v) of 
Definition \ref {definition: ramified structure}.
Then we can check that the two squares in the diagram
\[
 \begin{CD}
  V_0\otimes\mathbb{C}[w]/(w^3) @>\pi'_0>> \mathbb{C}[w]/(w^3) \\
  @AAA @AA\phi'_1 A \\
  V_1\otimes\mathbb{C}[w]/(w^3) @>\pi'_1>> (w)/(w^4) \\
  @AAA @AA\phi'_2 A \\
  (z)\otimes V_0\otimes\mathbb{C}[w]/(w^3) @>\mathrm{id}\otimes\pi'_0>> (z)/(w^5)
 \end{CD}
\]
are commutative.
If we put $\nu'(w):=(w+w^2)dw/w^3$ which is different from the true exponent,
the diagram
\[
 \begin{CD}
  V_0\otimes\mathbb{C}[w]/(w^3) @> \pi'_0>> \mathbb{C}[w]/(w^3) \\
  @V \nabla|_{z^2=0}\otimes\mathrm{id} VV  @V V \nu'(w) V \\
  V_0\otimes\dfrac{dz}{z^2}\otimes\mathbb{C}[w]/(w^3) @> \pi'_0>> 
  \mathbb{C}[w]/(w^3)\otimes\dfrac{dz}{z^2} 
 \end{CD}
\]
is commutative,
because
\begin{align*}
 \pi'_0(\nabla(e_0))
 &=
 \pi'_0\Big(e_1\otimes \frac{dz}{2z^2}\Big)
 =(w+w^2)\frac{dw}{w^3}=\nu'(w)\pi'_0(e_0)
 \\
 \pi'_0(\nabla(e_1))
 &=
 \pi'_0\Big( ze_0\otimes \frac{dz}{2z^2}+e_1\otimes\frac{dz}{2z}\Big)
 =w^2\frac{dw}{w^3}+w\frac{dw}{w}=\nu'(w)(w+w^2)=\nu'(w)\pi'_0(e_1). 
\end{align*}
Similarly, the diagram
\[
 \begin{CD}
  V_1\otimes\mathbb{C}[w]/(w^3) @> \pi'_1>> (w)/(w^4) \\
  @V \nabla|_{z^2=0}\otimes\mathrm{id} VV  @V V \nu'(w)+\frac{dw}{w} V \\
  V_1\otimes\dfrac{dz}{z^2}\otimes\mathbb{C}[w]/(w^3) @> \pi'_1>> 
  (w)/(w^4)\otimes\dfrac{dz}{z^2} 
 \end{CD}
\]
is commutative,
because
\begin{align*}
 \pi'_1(\nabla(e_1))
 &=
 \pi'_1\Big( ze_0\otimes \frac{dz}{2z^2}+e_1\otimes \frac{dz}{2z} \Big)
 \\
 &=(z+2zw)\frac{dw}{w^3}+(w+w^2)z\frac{dw}{w^3}
 \\
 &= (z+3zw)\frac{dw}{w^3}
 =(w+w^2)\left( (w+w^2)\frac{dw}{w^3}+\frac{dw}{w}\right)
 =\left(\nu'(w)+\frac{dw}{w}\right)\pi'_1(e_1)
 \\
 \pi'_1(\nabla(ze_0))
 &=
 \pi'_1\Big(ze_1\otimes \frac{dz}{2z^2}+ze_0\otimes \frac{dz}{z}\Big)
 \\
 &=
 z(w+w^2)\frac{dw}{w^3}+(z+2zw)2z\frac{dw}{w^3}
 \\
 &=zw\frac{dw}{w^3}
 =(z+2zw)(w+2w^2)\frac{dw}{w^3}
 =
 \Big( \nu'(w)+\frac{dw}{w}\Big)\pi'_1(ze_0).
\end{align*}
So $(V_k,L_k,\pi'_k,\phi'_k)$ satisfies all the other conditions of
Definition \ref {definition: ramified structure} except the condition (v).

\end{document}